\documentclass[10pt,leqno]{amsart}
\usepackage{amsmath,amssymb,amsfonts,mathtools}
\usepackage{amsthm}
\usepackage{graphicx}
\usepackage{textcomp}
\usepackage[T1]{fontenc} 						%
\usepackage[utf8]{inputenc} 				%
\usepackage[english]{babel}
\usepackage{titlesec}

\usepackage[inline,shortlabels]{enumitem}
\usepackage[dvipsnames]{xcolor}
\usepackage{MnSymbol}
\usepackage{tikz-cd}
\usetikzlibrary{hobby}
\usepackage{dsfont}
\usepackage{soul}
\usepackage{subcaption}
\usepackage{indentfirst,csquotes}
\usepackage{hyperref}
\usepackage{fontawesome5}
\usepackage{pifont}

\DeclareMathOperator{\diag}{diag}

\newcommand{\sml}{\scriptscriptstyle}
\newcommand{\qmarks}[1]{``#1''}

\definecolor{myblue}{rgb}{0, 0, 0.8}
\definecolor{myred}{rgb}{0.7, 0, 0}
\definecolor{mygreen}{rgb}{0, 0.5, 0}
\definecolor{my_cayenne}{HTML}{800002}
\definecolor{grapher_red}{HTML}{FB0207}
\definecolor{grapher_green}{HTML}{118040}

\newcommand{\cred}[1]{\color{myred} #1\color{black}}
\newcommand{\cgreen}[1]{\color{mygreen} #1\color{black}}

\newcommand{\stkrl}[2]{\stackrel{\text{\tiny{(#1)}}}{#2}}

\graphicspath{{figs/}}

\newcommand{\gln}{GL(n, \R)}

\newcommand{\repr}[1]{a^{#1}}
\newcommand{\reprarg}[2]{\repr{#1}_{#2}}

\newcommand{\id}[1]{\text{id}_{\sml #1}}
\newcommand{\norm}[1]{\left \|#1 \right \|}
\newcommand{\T}{^\top}

\newcommand{\argmin}{\mathop{\rm argmin}}
\newcommand{\argmax}{\mathop{\rm argmax}}

\newcommand{\R}{\mathbb{R}}
\newcommand{\N}{\mathbb{N}}

\newcommand{\U}{\mathcal{U}}
\newcommand{\X}{\mathcal{X}}

\newcommand{\cost}{r}
\newcommand{\costfb}[1]{\cost_{#1}}

\newcommand{\vv}{v_{\policy, \cost}}
\newcommand{\vvarg}[1]{v_{#1}}
\newcommand{\Vsym}{V}
\newcommand{\V}{\Vsym_{\policy, \cost}}
\newcommand{\Varg}[1]{\Vsym_{#1}}

\newcommand{\q}{q_{\policy, \cost}}

\newcommand{\Q}{Q_{\policy, \cost}}

\newcommand{\Cost}{R}
\newcommand{\Costfb}[1]{\Cost_{#1}}

\newcommand{\f}{f}
\newcommand{\ffb}[1]{\f_{#1}}
\newcommand{\F}{F}
\newcommand{\Ffb}[1]{\F_{#1}}
\newcommand{\Fapx}{\hat{F}}
\newcommand{\Fapxfb}[1]{\Fapx_{#1}}

\newcommand{\policy}{\pi}

\newcommand{\IdxX}{\mathcal{I}_{\sml \X}}
\newcommand{\IdxU}{\mathcal{I}_{\sml \U}}

\newcommand{\Sect}{\Gamma(\X, \U)}

\newcommand{\state}{x} %
\newcommand{\act}{u}   %

\newcommand{\disc}{\gamma}

\newcommand{\advset}[2]{\mathcal{C}_{#1}(#2)}

\newcommand{\feasset}[1]{\mathcal{A}_{\scalebox{0.6}{$\,\disc\!\f$}}(#1)}

\newcommand{\invset}[1]{\mathcal{F}^{\sml I}(#1)}

\newcommand{\lintransf}{b}
\newcommand{\orthant}{\mathcal{O}_{\sml\leq}}

\newcommand{\proj}{s}
\newcommand{\Proj}{S}
\newcommand{\Projexpl}{\diag\{\mathds{1}_{m_i} \!:\, i = 1, \ldots, n\}}

\newcommand{\maps}{\text{Map}}

\newcommand{\actAdvsym}{\ell}
\newcommand{\actAdv}[1]{\actAdvsym_{#1}}
\newcommand{\advsym}{\text{adv}}
\newcommand{\adv}[2]{\advsym_{#1, #2}}

\newcommand{\B}{B}

\newcommand{\twonorm}[1]{\norm{#1}_2}
\newcommand{\infnorm}[1]{\norm{#1}_\infty}

\newcommand{\eqrel}{\sim_{\sml \!\actAdvsym}}

\newcommand{\mdp}{\mathfrak{M}}
\newcommand{\mdparg}[2]{\mdp\left(#1, #2\right)}

\newcommand{\normset}{\mathcal{N}}
\newcommand{\outmap}{h_{\sml \text{max}}}
\newcommand{\bellman}{\mathcal{T}}
\newcommand{\threshold}{\varepsilon_{\sml \text{min}}}

\newcommand{\prob}{\mathbb{P}}
\newcommand{\E}[2]{\mathbb{E}_{#1}\left[#2\right]}
\newcommand{\Bapx}{\hat{B}}
\newcommand{\Costpert}{\hat{\Cost}}

\renewcommand{\xmapsto}[1]{\overset{#1}{\longmapsto}}

\newtheorem{theorem}{Theorem}[]
\newtheorem{definition}[theorem]{Definition}

\newtheorem{lemma}[theorem]{Lemma}
\newtheorem{proposition}[theorem]{Proposition}
\newtheorem{corollary}[theorem]{Corollary}

\newtheorem{remark}[theorem]{Remark}

\titleformat{\section}{\normalfont\Large\centering\scshape}
  {\centering\thesection.}
  {1em}
  {}
\titleformat{\subsection}
  {\normalfont\large\centering\scshape}
  {\thesubsection.}
  {1em}
  {}

\hypersetup{ colorlinks=true, linkcolor=black, filecolor=black, urlcolor=black }

\begin{document}
\title{ON REWARD-BALANCING METHODS FOR REINFORCEMENT LEARNING} %
\setlength{\parindent}{0pt}
\author{Simone Baroncini} %
\address{%
\noindent Department of Electrical, Electronic, and Information Engineering ``Guglielmo Marconi'' - DEI\\
University of Bologna\\
Bologna, Italy}
\email{s.baroncini@unibo.it}
\author{Bahman Gharesifard} %
\address{Department of Mathematics and Statistics\\
Queen's University\\
Kingston, ON, Canada}
\email{bahman.gharesifard@queensu.ca}
\urladdr{https://gharesifard.github.io}
\author{Giuseppe Notarstefano} %
\address{Department of Electrical, Electronic, and Information Engineering ``Guglielmo Marconi'' - DEI\\
University of Bologna\\
Bologna, Italy}
\email{giuseppe.notarstefano@unibo.it}

\date{\today}

\begin{abstract}
This paper investigates the so-called reward-balancing methods, a novel class of algorithms for solving discounted-return reinforcement learning (RL) problems. These methods consist of iteratively adjusting the reward function to transform the RL problem into an equivalent one in which the optimal policies are greedy. For this procedure, referred to as normalization process, we provide a theoretical analysis of the involved transformations, emphasizing their algebraic structure. Then, we introduce a control-theoretic reformulation, recasting the reward-balancing procedure into an optimal control framework. The approach is further extended to address model uncertainty through stochastic model sampling, yielding normalization guarantees and probabilistic bounds on stochastic fluctuations. Using the proposed optimal control framework within a scenario model predictive control (MPC) setting, we demonstrate, through simulation studies, performance improvements over the current state-of-the-art.
\end{abstract} %
\maketitle

\section{Introduction}
\label{sec:introduction}
\noindent Most reinforcement learning algorithms developed within the Markov Decision Processes (MDPs) framework rely on a reward function that assigns values to the actions available in each state. Specifically, the role of rewards is to compare actions in the long term, either in terms of average or discounted return. Consequently, many MDPs can induce similar, if not identical, pairwise action comparisons, yet exhibit different performance under a given algorithm. Therefore, the task can be viewed from a different perspective, i.e., modify the reward function in a way to either gain better performance or simplify the problem. This idea has been explored in the literature in different contexts. The concept of modifying rewards without altering the set of optimal policies was first explored by Ng et al.~\cite{ng1999policy}, where potential functions were used to define policy-invariant reward transformations. This method, commonly referred to as \emph{reward shaping}, has then become a foundational tool for improving sample efficiency and convergence speed in reinforcement learning. Subsequent works have extended these ideas to more sophisticated shaping functions and learning-based shaping mechanisms, including techniques for automatic reward-balancing \cite{marthi2007automatic, ren2021orientation} and methods that learn
shaping potentials \cite{zou2019reward, hu2020learning}. The reward transformations treated in this paper were recently introduced by Mustafin and collaborators~\cite{mustafin2025mdpgeometrynormalizationreward}, where they are termed \emph{reward-balancing methods}. Unlike traditional RL algorithms, which iteratively improve the policy while keeping the reward function fixed, reward-balancing methods take an alternative approach, called \emph{normalization}, which consists of fixing the policy as greedy with respect to the current reward function and adjusting the latter to make that policy optimal. Interest in reward balancing is driven by the fact that the method proposed in \cite[Section 8]{mustafin2025mdpgeometrynormalizationreward} matches the state-of-the-art sample complexity of $Q$-learning, while being
fully parallelizable, thus representing a significant improvement over the current state-of-the-art~\cite{woo2025blessing}.

\subsubsection*{Statement of Contributions.} The main contribution of this paper is threefold. First, we provide a comprehensive theoretical analysis of the group of transformations underlying reward-balancing methods, which allows for the iterative reward adjustment process not to alter the original problem. Specifically, we study the action of this group on the space of reward functions, providing insight into the mathematical foundations of this approach, with particular focus on its underlying geometry. Second, we introduce a control-theoretic reformulation, which offers a structured and systematic approach for the design and analysis of reward-balancing methods. From this perspective, the problem is recast as an optimal control problem, which facilitates the design of novel methods through the introduction of constraints and penalty terms. Third, we extend the framework to settings with model uncertainty, adopting a stochastic model-sampling paradigm. In this context, we establish rigorous normalization guarantees and derive probabilistic bounds on the deviation from the average update, offering a quantitative measure of the stochastic fluctuations. These results ensure that reward-balancing methods remain well-posed and stable under stochastic model sampling. Finally, simulation results demonstrate that the proposed optimal control framework, implemented in a scenario receding-horizon setting, outperforms the current state-of-the-art method in terms of policy optimality.
\subsubsection*{Organization.}
The paper is organized as follows. In Section~\ref{sec:notations_and_preliminaries}, we summarize some notation and mathematical preliminaries we need to present the problem under study. In Section~\ref{sec:framework}, we introduce the discounted-return reinforcement learning framework. In Section~\ref{sec:section_1}, we introduce reward-balancing methods in the framework where the exact MDP model is known, providing their characterization as a group and studying their action on the space of reward functions. In Section~\ref{sec:section_2}, we provide a control-theoretic reformulation of these methods. Finally, in Section~\ref{sec:section_3}, we extend the framework to handle the case of uncertain model, and we provide tools for the design of robust reward-balancing laws.
\section{Mathematical Notation and Preliminaries}
\label{sec:notations_and_preliminaries}
We introduce some of the notations used, starting with some categorical ones which turn out to be useful for out purposes.
We denote by~$\maps(A_1, A_2)$ the collection of all the morphisms between two objects $A_1$ and $A_2$ in a given category. For instance, if $A_1$ and $A_2$ are topological spaces, then $\maps(A_1, A_2)$ denotes the set of all continuous maps from $A_1$ to $A_2$. The notation $A_1 \simeq A_2$, where $A_1$ and $A_2$ belong to the same category, indicates that $A_1$ and $A_2$ are isomorphic. The symbol $\id{A}$ denotes the identity map on a set $A$. We will use the symbol~$\leq$ to denote both the standard order of $\R$ and the point-wise (partial) order between real-valued functions. Specifically, if $\varphi, \psi \in \maps(A, \R)$, for some set $A$, we write $\varphi \leq \psi$ to indicate that $\varphi(p) \leq \psi(p)$ for all $p \in A$. The same convention holds for~$\geq$. Given an $n$-dimensional vector space $V$ with an ordered basis $\mathcal{B} = \{e_1, \hdots, e_n\}$ and a vector $v \in V$, we will use $v^i$ to denote the $i$-th component of $v$ with respect to the basis $\mathcal{B}$. For vectors, the symbols $\leq$ and $\geq$ are also used to denote the component-wise (partial) order, whenever a basis is fixed. For Euclidean spaces, we use the symbol $\|\cdot\|_{p}$ to denote the $p$-norm. If we do not refer to a specific norm, we instead use the general symbol $\|\cdot\|$. We denote by $\hat{e}_i \in \R^n$ the canonical $i$-th basis element, i.e. $\hat{e}_i^j = 1$ if $j = i$, $\hat{e}_i^j = 0$ otherwise. If $M$ is a matrix, $M_j^i$ denotes the element in the $i$-th row and $j$-th column. The identity matrix of dimension $n$ is denoted by~$I_n$, while $\mathds{1}_n \in \R^n$ denotes the vector of all ones. The notation $\diag\{M_1, \hdots, M_n\}$ refers to the block-diagonal matrix whose $i$-th block is $M_i$. If $X$ is a random variable and $Y$ is a set of conditions on $X$, $\E{}{X | Y}$ denotes the expectation of $X$ given $Y$. If $E$ is an event in a probability space, we denote by $\prob(E)$ the corresponding probability.

Throughout the paper, we assume the basic knowledge of groups. Nevertheless, we briefly review some key concepts that are useful for understanding some of the main results of the paper.
Recall that a \emph{homomorphism} between two groups $(G, *), (H, \cdot)$ is a map $\psi : G \to H$ satisfying
\begin{align*}
    \psi(g * h) = \psi(g) \cdot \psi(h),
\end{align*}
for all $g, h \in G$, whereas an \emph{anti-homomorphism} inverts the order of multiplication $\psi(g * h) = \psi(h) \cdot \psi(g)$. The symmetric group $\text{Sym}(S)$ of a set $S$ is the group of bijections from $S$ to itself, in which composition is group multiplication. Given a group $(G, *)$ and a set $S$, a left action of $G$ on $S$ is a homomorphism from $(G, *)$ to the symmetric group of $S$, namely a map $a : G \to \text{Sym}(S)$ such that
\begin{align*}
    a(g * h) = a(g) \circ a(h).
\end{align*}
Equivalently, it is a map
\begin{align*}
    \beta : G \times S &\to S\\
    (g, s) &\mapsto a_g(s) \coloneqq a(g)(s).
\end{align*}
A right action is an anti-homomorphism from $(G, *)$ to the symmetric group of $S$. The orbit $O_s$ of an element $s \in S$ is the subset of $S$ reachable from $s$ under the action of $(G, *)$:
\begin{align*}
    O_s \coloneqq \{a_g(s) : g \in G\}.
\end{align*}
Given two elements $s_1, s_2$, we write $s_1 \sim_a s_2$ if they belong to the same orbit, i.e. if there exists $g \in G$ such that $s_2 = a_g(s_1)$. The relation $\sim_a$ is an equivalence relation and the orbit $O_s$ is an equivalence class thereof. With slight abuse of notation, we will often denote a group $(G, *)$ simply by $G$.

Furthermore, we provide a brief overview of some fundamental concepts of fiber bundles, which are frequently used throughout the text. A fiber bundle is a tuple $(M, N, \mathcal{P}, F)$ where $M$, $N$ and $F$ are topological spaces, called total space, base space, and model fiber, respectively, and $\mathcal{P} : M \to N$ is a surjective continuous map, with the property that, for every point $p \in N$ there exists a neighborhood $U$ of $p$ whose inverse image $\mathcal{P}^{-1}(U)$ is homeomorphic to the product $U \times F$. With slight abuse of notation, we call the inverse image of any point $p \in U$ the fiber of $p$. The homeomorphism $\psi : \mathcal{P}^{-1}(U) \to U \times F$ is called a \emph{local trivialization} and satisfies $\mathcal{P} = \mathcal{P}_1 \circ \psi$, where $\mathcal{P}_1 : U \times F \to U$ is the projection onto the first factor. If the set $U$ can be chosen to be the whole base space, then the total space is homeomorphic to the product $N \times F$ and is called a \emph{trivial bundle}. Given a topological group $G$ (i.e., a group equipped with a topology) with multiplication $*$, a \emph{principal $G$-bundle} is a fiber bundle whose fiber model is $G$ and whose total space is acted upon by $G$ through a continuous, fiber-preserving right action $a$ such that:
\begin{enumerate}[label=$\rhd$]
    \item the action is transitive on each fiber, i.e., for any $p \in M$ and any pair of points $q_1, q_2 \in \mathcal{P}^{-1}(p)$, there exists a group element $g \in G$ such that $a_g(q_1) = q_2$,
    \item the action is free, i.e., no nontrivial element of the group fixes a point,
    \item local trivializations are $G$-equivariant, i.e., if $\psi : \mathcal{P}^{-1}(U) \to U \times G$ is a local trivialization, then:
    \begin{align*}
        \psi(q) = (\mathcal{P}(q), g_1) \implies \psi(a_{g_2}(q)) = (\mathcal{P}(q), g_1 * g_2).
    \end{align*}
\end{enumerate}
In particular, each fiber $\mathcal{P}^{-1}(p)$ is precisely one orbit and is homeomorphic to $G$. %
\section{Reinforcement Learning Framework}
\label{sec:framework}
In this paper, we focus on classes of Markov decision processes sharing, up to a bijection, a finite state space $\X$ of cardinality $n$. We define the action space\footnote{In the literature, this is usually referred to as ``state-action space'', i.e., the collection of all pairs consisting of a state and an action available in that state.} $\U$ as the disjoint union
\begin{align*}
    \U \coloneqq \bigsqcup_{\state \in \X} \widetilde{\U}_{\state},
\end{align*}
where $\widetilde{\U}_{\state} \subseteq \widetilde{\U}$, with finite cardinality $m_{\state}$, denotes the set of all actions from an action pool $\widetilde{\U}$ which can be taken from the state $\state$. It follows that the action space has cardinality $m \coloneqq \sum_\state m_\state$. This construction admits a natural projection map $\proj : \U \to \X$, which maps each ``disjoint'' action to the state it belongs to. Henceforth, we will denote by $\U_\state$ the fiber $\proj^{-1}(\state)$ of the state $\state$. In this framework, a deterministic stationary policy (DSP or, simply, policy) $\policy$ can be conveniently defined as a \emph{section} of the projection map, i.e., a map $\policy : \X \to \U$ such that $\proj \circ \policy = \id{\X}$. We denote the set of all the deterministic stationary policies by $\Sect$. For convenience, we define a preliminary classification of MDPs based solely on the ``bundle'' construction introduced so far. 
\begin{definition}
    We say that two MDPs are comparable iff there exists a bijective bundle map\footnote{Given two projections $\proj_1: \U_1 \to \X_1$ and $\proj_2: \U_2 \to \X_2$, a map $F : \U_1 \to \U_2$ is said to be a \textit{bundle map} iff it is fiber-preserving, i.e., there exists a map $G : \X_1 \to \X_2$ such that $\proj_2 \circ F = G \circ \proj_1$.} between their action spaces.
\end{definition}
\noindent
Within each class, every MDP is characterized by its reward function $\cost : \U \to \R$, defining the goodness of each action, and its probability transition function $\f : \X \times \U \to [0, 1]$, where $\f(\state^+\!, \act)$ denotes the probability of transitioning to $\state^+$ after applying $\act$ (from the state $\proj(\act)$). A policy $\policy$ naturally induces pullbacks of both functions onto the state space given by
\begin{subequations}\label{eq:pullback_functions}
    \begin{align}
        \costfb{\policy} &\coloneqq \cost \circ \policy,\\
        \ffb{\policy} &\coloneqq \f \circ (\id{\X} \times \policy).
    \end{align}
\end{subequations}
The objective of discounted-return reinforcement learning is to find a DSP maximizing the so-called value function $\vv : \X \to \R$ which assigns to each state the expected long-run cumulative reward associated with the given policy, and is formally defined as
\begin{align}\label{eq:value_funct_def}
    \vv(\state) = \E{}{\sum_{t = 0}^{\infty} \disc^t \costfb{\policy}(\state_t) \biggr\rvert 
    \begin{array}{l}
      \state_0 = \state\\
      \state_{t + 1} \sim \ffb{\policy}(\cdot, \state_t)
   \end{array}
   },
\end{align}
for some discount factor $\disc \in [0, 1)$. Specifically, we aim to find $\policy^* \in \Sect$ such that:
\begin{align*}
    \vvarg{\policy^*\!,\cost}(\state) \geq \vv(\state), \quad \forall \state \in \X, \forall \policy \in \Sect.
\end{align*}
Since in this paper we will only consider comparable MDPs, we will denote an MDP with reward $\cost$, transition function $\f$ and discount factor $\disc$ as $\mdparg{\cost}{\disc \f}$. Note that the knowledge of $\disc \f$ is enough to retrieve both $\disc$ and $\f$. An optimal DSP is always guaranteed to exist in this framework (see, e.g., \cite[Theorem 7.1.9]{puterman2014markov}), making the problem well-posed. Moreover, the value function $\vv$ can be characterized as the unique solution of the renowned Bellman equation
\begin{align}\label{eq:bellman_equation_v}
    v(\state) = \costfb{\policy}(\state) + \disc  \sum_{z \in \X} \ffb{\policy}(z, \state) v(z), \quad \forall\state \in \X.
\end{align}
This condition, can be also expressed in terms of action-level comparisons by introducing the advantage function $\adv{\policy}{\cost} : \U \to \R$ corresponding to the policy $\policy$, defined by
\begin{align*}
    \adv{\policy}{\cost}(\act) \coloneqq \cost(\act) + \disc  \sum_{z \in \X} \f(z, \act) \vv(z) - \vv(\proj(\act)),
\end{align*}
for all $\act \in \U$. As the name suggests, this function quantifies the benefit of deviating from the current policy by selecting $\act$ in place of the action $\policy(\state)$ taken by the policy in state $\state = \proj(\act)$. Intuitively, if $\adv{\policy}{\cost}(\act) > 0$ then replacing $\policy(\state)$ with $\act$ improves the expected return, whereas if $\adv{\policy}{\cost}(\act) < 0$ then keeping the current policy is preferable. With this formulation, the fact that $\vv$ satisfies the equation \eqref{eq:bellman_equation_v} can be compactly written as 
\begin{align}\label{eq:bellman_equation_v_adv}
    \adv{\policy}{\cost} \circ \policy = 0.
\end{align}
The advantage function also allows us to introduce the action-value function $\q : \U \to \R$, commonly referred to as $Q$-function, defined as
\begin{align}
    \q \coloneqq \adv{\policy}{\cost} + \vv \circ \proj,
\end{align}
which assigns to each action a long-run expected return just as the value function does to states. The $Q$-function is in turn the unique solution of an action-level version of \eqref{eq:bellman_equation_v}:
\begin{align}\label{eq:bellman_q_fun}
    \q(\act) &= \cost(\act) + \disc \sum_{z \in \X} \f(z, \act) \bigl[\q \circ \policy\bigr](z), \quad \forall \act \in \U.
\end{align}
Furthermore, by \eqref{eq:bellman_equation_v_adv}, it follows that the value function is given by the pullback, via the policy $\policy$, of the $Q$-function:
\begin{align*}
    \vv = \q \circ \policy.
\end{align*}
\noindent
We then introduce two meaningful relations that enable us to further partition a class of comparable MDPs.
\begin{definition}[\textbf{Reward-coupled and model-coupled MDPs}]
    We say that two comparable MDPs are:
    \begin{enumerate}[label=$\rhd$]
        \item reward-coupled iff their reward functions coincide, up to a bijection of action spaces,
        \item model-coupled iff they have the same discount factor and their transition functions coincide, up to a bijection of state and action spaces.
    \end{enumerate}
\end{definition}
\begin{figure}[t]
    \centering
    \includegraphics[width = 0.6\textwidth]{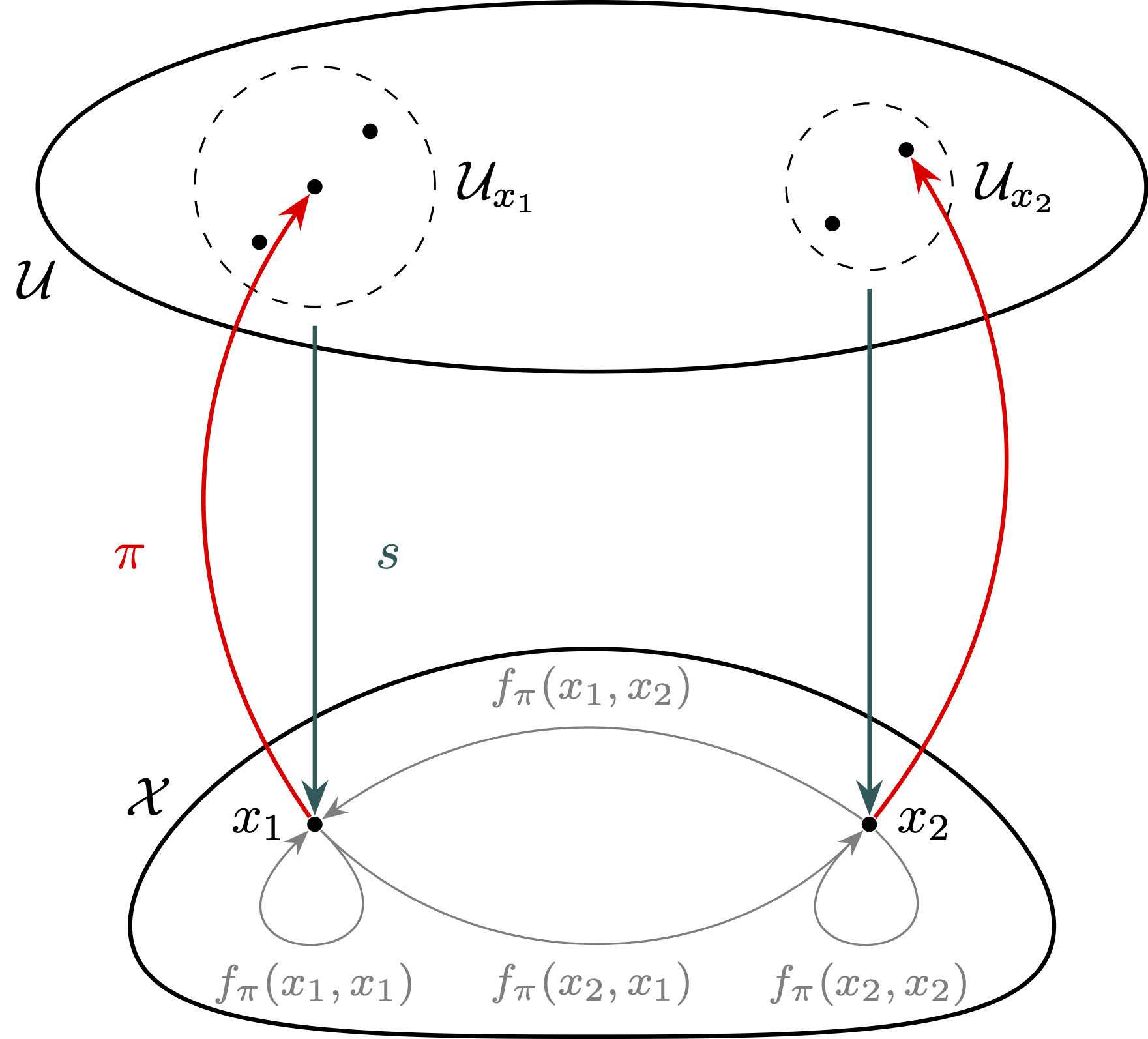}
    \caption{Representation of an MDP with two states $x_1, x_2$ and five actions, three available in $x_1$ and two available in $x_2$.}
    \label{fig:mdp_bundle}
\end{figure}
\noindent
It is often convenient to identify the functions introduced so far with proper vectors and matrices. In order to do so, we index both the state space $\X$, by identifying it with the set of the first $n$ positive integers, and, accordingly, the action space $\U$, with the set of the first $m$ positive integers, by first enumerating the actions in the fiber of the first state, then those in the fiber of the second state, and so on. Let us denote by $\IdxX : \X \to \{1, \hdots, n\}$ the first index map and by $\IdxU : \U \to \{1, \hdots, m\}$ the second one. With this indexing in mind, in order not to burden the notation any further we will use in the following $m_i$ in place of $m_{\state}$ and $\U_i$ in place of $\IdxU(\U_\state)$ whenever $i = \IdxX(\state)$.
We can then identify the reward function $\cost$ with a vector $\Cost \in \R^m$ whose components satisfy
\begin{align*}
    \Cost^{i} = \bigl(\cost \circ \IdxU^{-1}\bigr)(i),
\end{align*}
for all $i \in \{1, \hdots, m\}$, and the probability transition function $\f$ with a row-stochastic matrix $\F \in [0, 1]^{m \times n}$, whose entries satisfy
\begin{align*}
    \F^i_j = \bigl[\f \circ (\IdxX \times \IdxU)^{-1}\bigr](j, i),
\end{align*}
for all $i \in \{1, \hdots, m\}$ and $j \in \{1, \hdots, n\}$. Furthermore, a DSP $\policy$ can be represented by a matrix $\Pi \in \{0, 1\}^{n\times m}$ satisfying $\Pi_j^i = 1$ if $j = (\IdxU \circ \policy \circ \IdxX^{-1})(i)$ and $\Pi_j^i = 0$, otherwise. In this way the pullbacks representations $\Costfb{\policy} \in \R^n$ and $\Ffb{\policy} \in \R^{n\times n}$ can be easily computed as $\Costfb{\policy} = \Pi \Cost$ and $\Ffb{\policy} = \Pi \F$, respectively. Consequently, the projection map $\proj$ is represented by the matrix
\begin{align}\label{eq:proj_mat}
    \Proj \coloneqq \Projexpl.
\end{align}
Observe that $\Pi \Proj = I_n$, for any policy matrix $\Pi$.
Eventually, as done for the other functions, we can represent both value functions with two vectors $\V \in \R^n$ and $\Q \in \R^m$. In this representation, we can exploit the Bellman equations \eqref{eq:bellman_equation_v}-\eqref{eq:bellman_q_fun} to express the value vectors as functions of the reward vectors as follows
\begin{align}\label{eq:value_reward_relation}
    \V &= (I_n - \disc \Ffb{\policy})^{-1} \Costfb{\policy},\\
    \Q &= (I_m - \disc \F\Pi)^{-1} \Cost,
\end{align}
for all $\policy \in \Sect$. These two representations are naturally related by $\V = \Pi \Q$, as proved by the following identities:
\begin{align*}
    \Pi \Q &= \Pi (I_m - \disc \F\Pi)^{-1} \Cost\\
    &= \sum_{t = 0}^\infty \disc^t \Pi(\F\Pi)^t \Cost\\
    &= \sum_{t = 0}^\infty \disc^t (\Pi\F)^t \Pi\Cost\\
    &= (I_n - \disc \Ffb{\policy})^{-1} \Costfb{\policy}\\
    &= \V.
\end{align*}
The last ingredient which we require is the notion of symmetry for MDPs, as we outline next.
\subsubsection*{MDP Symmetry.}
In many cases, the MDP of interest exhibits some form of symmetry, meaning that certain states and actions are interchangeable. For example, in the game of tic-tac-toe, an empty board with a cross in the top-right corner is equivalent, from the point of view of the game dynamics, to an empty board with the cross in any other corner, provided that the available actions are rotated accordingly. More generally, a symmetry in an MDP implies that there exists a group of transformations acting simultaneously on the state and action spaces, under which the transition and reward functions remain invariant. We formalize this notion in the following definition (see, e.g., \cite{van2020mdp}).
\begin{definition}[\textbf{Group-invariant MDPs}]\label{def:group_invariant_mdp}
    Let $\mdparg{\cost}{\disc \f}$ be an MDP and let $G$ be a group acting on both $\X$ and $\U$ through left actions $\reprarg{\mathrm{x}}{}$ and $\reprarg{\mathrm{u}}{}$, respectively. We say that the tuple $\mdp_G(\cost, \disc \f) \coloneqq (\mdparg{\cost}{\disc \f}, G)$ is a $G$-invariant MDP iff the following properties hold:
    \begin{align*}
        \text{(Reward Invariance): }& \cost\left(\reprarg{\mathrm{u}}{g}(\act)\right) = \cost(\act)\\
        \text{(Transition Invariance): }& \f\left( \reprarg{\mathrm{x}}{g}(\state^+),~ \reprarg{\mathrm{u}}{g}(\act) \right) = \f(\state^+, \act)
    \end{align*}
    for all $\state^+ \in \X$, $\act \in \U$ and $g \in G$.
\end{definition}
\noindent
The group actions $\reprarg{\mathrm{x}}{}$ and $\reprarg{\mathrm{u}}{}$ can be viewed as homomorphisms from the group $G$ to the symmetric groups $\text{Sym}(\X) \simeq S_n$ and $\text{Sym}(\U) \simeq S_m$, respectively. Therefore, in the finite-dimensional vector representation previously introduced, $\reprarg{\mathrm{x}}{g}$ and $\reprarg{\mathrm{u}}{g}$ are represented by two permutation matrices $A_g^{\mathrm{x}} \in \{0, 1\}^{n\times n}$ and $A_g^{\mathrm{u}} \in \{0, 1\}^{m\times m}$. Hence, in this representation, the group invariant conditions of Definition \ref{def:group_invariant_mdp} read as:
\begin{subequations}\label{eq:group_invariant_mdp_vector_form}
    \begin{align}
        &A_g^{\mathrm{u}} \Cost = \Cost\\
        &A_g^{\mathrm{u}} \F \left(A_g^{\mathrm{x}}\right)\T = \F.
    \end{align}
\end{subequations}
To illustrate the benefit of exploiting MDP symmetries, consider a simple example: a ``cyclic corridor'' with three states $\{x_1, x_2, x_3\}$ and two actions a-priori: $u_l$ to move left ($\text{mod } 3$) and $u_r$ to move right ($\text{mod } 3$). The action space is made of three copies of these two actions. We will denote by $u_l^i$ and $u_r^i$ the left and right actions taken from state $x_i$. Hence, the action space $\U$ has cardinality 6. The transition function can be represented as follows:
\begin{center}
    \begin{tabular}[]{c|ccc}
        $f$ & $x_1$ & $x_2$ & $x_3$\\
        \hline
        $u_l^1$ & 0 & 0 & 1\\
        $u_r^1$ & 0 & 1 & 0\\
        $u_l^2$ & 1 & 0 & 0\\
        $u_r^2$ & 0 & 0 & 1\\
        $u_l^3$ & 0 & 1 & 0\\
        $u_r^3$ & 1 & 0 & 0\\
    \end{tabular}
\end{center}
Suppose that our goal is to make the agent frequently visit the state $x_2$ (i.e., it should keep returning to $x_2$). To this end, we may define the following reward map:
\begin{center}
    \begin{tabular}[]{c|cc}
        $\cost$ & $u_l$ & $u_r$\\
        \hline
        $x_1$ & -1 & 0\\
        $x_2$ & -1 & -1\\
        $x_3$ & 0 & -1\\
    \end{tabular}
\end{center}
Now, consider the group $S_2 \coloneqq \{e, \sigma\}$ of permutations of 2 elements acting on this MDP. The group acts on $\tilde{\U}$ by either doing nothing ($\reprarg{\tilde{\mathrm{u}}}{e}$) or by swapping the two actions ($\reprarg{\tilde{\mathrm{u}}}{\sigma}$), and it acts on the state space by either doing nothing ($\reprarg{\mathrm{x}}{e}$) or swapping $x_1$ and $x_3$ ($\reprarg{\mathrm{x}}{\sigma}$). Specifically, the action of $G$ on $\X$ defines an injective group homomorphism from $G$ to the symmetric group of degree 3, $S_3$, whose range is the subgroup generated by the 2-cycle $\sigma_{13} \coloneqq (13)$. The resulting non-trivial action on $\U$ consists of applying $\reprarg{\tilde{\mathrm{u}}}{\sigma}$ to each fiber, followed by the permutation of the fibers induced by $\reprarg{\mathrm{x}}{\sigma}$. With respect to the actions of the group generator $\sigma$, the MDP exhibits transition invariance
\begin{center}
    \scalebox{1}{
    \begin{tabular}[]{c|ccc}
        $f$ & $x_1$ & $x_2$ & $x_3$\\
        \hline
        $u_l^1$ & 0 & 0 & 1\\[2pt]
        $u_r^1$ & 0 & 1 & 0\\[2pt]
        $u_l^2$ & 1 & 0 & 0\\[2pt]
        $u_r^2$ & 0 & 0 & 1\\[2pt]
        $u_l^3$ & 0 & 1 & 0\\[2pt]
        $u_r^3$ & 1 & 0 & 0\\[2pt]
    \end{tabular}
    $~\xmapsto{\text{$\reprarg{\tilde{\mathrm{u}}}{\sigma}$}}~$
    \begin{tabular}[]{c|ccc}
        $f$ & $x_1$ & $x_2$ & $x_3$\\
        \hline
        $u_r^1$ & 0 & 1 & 0\\[2pt]
        $u_l^1$ & 0 & 0 & 1\\[2pt]
        $u_r^2$ & 0 & 0 & 1\\[2pt]
        $u_l^2$ & 1 & 0 & 0\\[2pt]
        $u_r^3$ & 1 & 0 & 0\\[2pt]
        $u_l^3$ & 0 & 1 & 0\\[2pt]
    \end{tabular}
    $~\xmapsto{\text{$\reprarg{\mathrm{x}}{\sigma}$}}~$
    \begin{tabular}[]{c|ccc}
        $f$ & $x_3$ & $x_2$ & $x_1$\\
        \hline
        $u_r^3$ & 0 & 0 & 1\\[2pt]
        $u_l^3$ & 0 & 1 & 0\\[2pt]
        $u_r^2$ & 1 & 0 & 0\\[2pt]
        $u_l^2$ & 0 & 0 & 1\\[2pt]
        $u_r^1$ & 0 & 1 & 0\\[2pt]
        $u_l^1$ & 1 & 0 & 0\\[2pt]
    \end{tabular}
    }
\end{center}
and reward invariance
\begin{center}
    \scalebox{1}{
    \begin{tabular}[]{c|cc}
        $\cost$ & $u_l$ & $u_r$\\
        \hline
        $x_1$ & -1 & 0\\
        $x_2$ & -1 & -1\\
        $x_3$ & 0 & -1
    \end{tabular}
    $~\xmapsto{\text{$\reprarg{\tilde{\mathrm{u}}}{\sigma}$}}~$
    \begin{tabular}[]{c|cc}
        $\cost$ & $u_r$ & $u_l$\\
        \hline
        $x_1$ & 0 & -1\\
        $x_2$ & -1 & -1\\
        $x_3$ & -1 & 0
    \end{tabular}
    $~\xmapsto{\text{$\reprarg{\mathrm{x}}{\sigma}$}}~$
    \begin{tabular}[]{c|cc}
        $\cost$ & $u_r$ & $u_l$\\
        \hline
        $x_3$ & -1 & 0\\
        $x_2$ & -1 & -1\\
        $x_1$ & 0 & -1
    \end{tabular}
    }
\end{center}
This implies that the considered MDP is $S_2$-invariant. This allows us to identify the original MDP with a ``reduced'' one having:
\begin{enumerate}[label=$\rhd$]
    \item a reduced state space:
    \begin{align*}
        \X' \coloneqq \{[x_1], x_2\}, \quad [x_1] \coloneqq \{x_1, x_3\},
    \end{align*}
    \item a reduced action space:
    \begin{align*}
        \U' \coloneqq \{[u_l^1], [u_r^1], [u_l^2]\},
    \end{align*}
    where
    \begin{align*}
        [u_l^1] = \{u_l^1, u_r^3\}, \quad [u_r^1] = \{u_r^1, u_l^3\}, \quad [u_l^2] = \{u_l^2, u_r^2\},
    \end{align*}
    \item a reduced transition map:
    \begin{center}
        \begin{tabular}[]{c|cc}
            $f'$ & $[x_1]$ & $x_2$\\
            \hline
            $[u_l^1]$ & 1 & 0\\
            $[u_r^1]$ & 0 & 1\\
            $[u_l^2]$ & 1 & 0
        \end{tabular}
    \end{center}
    \item a reduced reward map:
    \begin{center}
        \begin{tabular}[]{c|cc}
            $\cost$ & $u_l$ & $u_r$\\
            \hline
            $[x_1]$ & -1 & 0\\
            $x_2$ & -1 & 
        \end{tabular}
    \end{center}
\end{enumerate}
\begin{center}
    \begin{tikzpicture}[use Hobby shortcut, scale = 1]
        \tikzset{sphere/.style={
        draw,
        thick,
        #1!75!black,
        circle,
    }}
        \tikzset{arrow/.style={
            very thick,
            ->,
            >=latex
        }}
        \begin{scope}
            \node[sphere = black] (p1) at (0.0, 0.0) {$x_1$};
            \node[sphere = black] (p2) at (-1.5, -2) {$x_2$};
            \node[sphere = black] (p3) at (1.5, -2) {$x_3$};
        
            \draw[-stealth, red](p1) to[bend right=20] node[above, midway, rotate = 53.13, scale = 0.5] {$(u_r^1, 0)$} (p2);
            \draw[-stealth, blue](p2) to[bend right=20] node[below, midway, rotate = 53.13, scale = 0.5] {$(u_l^2, -1)$} (p1);
            \draw[-stealth, brown](p1) to[bend right=20] node[below, midway, rotate = -53.13, scale = 0.5] {$(u_l^1, -1)$} (p3);
            \draw[-stealth, brown](p3) to[bend right=20] node[above, midway, rotate = -53.13, scale = 0.5] {$(u_r^3, -1)$} (p1);
            \draw[-stealth, blue](p2) to[bend right=20] node[below, midway, scale = 0.5] {$(u_r^2, -1)$} (p3);
            \draw[-stealth, red](p3) to[bend right=20] node[above, midway, scale = 0.5] {$(u_l^3, 0)$} (p2);
        \end{scope}
        \node[] () at (2.5, -2.0) {$\xrightarrow{\sim_G}$};
        \begin{scope}[xshift=5cm]
            \node[sphere = black] (p2) at (-1.5, -2) {$x_2$};
            \node[sphere = black, scale = 0.8] (p3) at (1.5, -2) {$[x_1]$};

            \draw[-stealth, blue](p2) to[bend right=20] node[below, midway, scale = 0.5] {$([u_l^2], -1)$} (p3);
            \draw[-stealth, red](p3) to[bend right=20] node[above, midway, scale = 0.5] {$([u_r^1], 0)$} (p2);

            \draw[-stealth, brown](p3.north east) arc [start angle=-50, end angle=220, radius=0.4] node[above, midway, scale = 0.5] {$([u_l^1], -1)$};
        \end{scope}
    \end{tikzpicture}
\end{center}
This example shows how exploiting the symmetries of the underlying MDP can help reduce its size, and thus the complexity of the reinforcement learning methods employed. \section{Exact-Model Reward-Balancing Methods}
\label{sec:section_1}
In this section, we focus on model-coupled MDPs and examine the reward transformations involved in reward-balancing methods. We highlight their main properties and provide the mathematical foundations of the control-theoretic framework developed later in the paper.
\subsection{Problem Statement}
In \cite{mustafin2025mdpgeometrynormalizationreward}, the authors propose a class of methods for modifying the reward function of an MDP to transform it into a \emph{normal MDP}. We recall this definition.
\begin{definition}[\textbf{Normal MDPs}]
    We say that an MDP is in normal form iff its optimal value function is zero.
\end{definition} 
\noindent
Relation \eqref{eq:value_reward_relation} implies that, in a normal MDP, every state has (at least) one action with zero reward in its fiber, which lies in the image of an optimal policy. Thus, the key advantage of these MDPs is that to find an optimal policy it is sufficient to select, for each state, an action with zero reward. Now, the challenge is to ``normalize'' a given MDP without altering the problem. That is, the transformation should preserve the relative position of the value functions, which induce a preorder over the policy space. We now state the problem addressed in this paper:
\begin{enumerate}[label=(\alph*)]
    \item Given a class of MDPs sharing the same \emph{known} model:
    \begin{enumerate}[label = $\rhd$]
        \item characterize the set of normal MDPs and its relationship to the reward transformations involved in reward-balancing methods (cf. Section~\ref{subsec:section_1A}), and
        \item identify a subset of these transformations that ``improve'' the reward function (cf. Section~\ref{subsec:section_1B});
    \end{enumerate}  
    \item Cast the normalization procedure as an optimal control problem in which the state and the input are the reward function and the transformation, respectively (cf. Section~\ref{sec:section_2});
    \item In realistic settings where the MDP is \emph{unknown} but can be sampled, leverage this control-theoretic formulation to design reward-balancing strategies outperforming the existing one in the literature (cf. Section~\ref{sec:section_3}).
\end{enumerate}
\subsection{Advantage-Preserving Reward Transformations}
\label{subsec:section_1A}
The set of reward transformations introduced in \cite{mustafin2025mdpgeometrynormalizationreward} is a subgroup of the affine group of $\maps(\U, \R) \simeq \R^m$, consisting of all transformations of the form
\begin{align}\label{eq:reward_transform}
    \actAdv{\delta}: \cost \mapsto \cost + \lintransf(\delta),
\end{align}
parametrized by the real-valued functions $\delta : \X \to \R$ on the state space through the injective linear operator $\lintransf : \maps(\X, \R) \to \maps(\U, \R)$ defined by 
\begin{align*}
    \lintransf(\delta)(\act) = \delta(\proj(\act))\bigl(\disc \f(\proj(\act), \act) - 1\bigr) + \disc\!\!\! \sum_{y \neq \proj(\act)}\!\! \delta(y) \f(y, \act),
\end{align*}
where $\act \in \U$. Denoting by $\Delta \in \R^n$ the vector representation of $\delta$, according to the notation introduced in the previous section, we can write the reward update as
\begin{align}\label{eq:reward_transform_representation}
    \mathcal{L}^\Delta: \Cost \mapsto \Cost + \left( \disc \F - \Proj \right) \Delta,
\end{align}
where $\Proj$ is defined as in \eqref{eq:proj_mat}. It is interesting to observe that this subgroup of transformations is the result of a faithful group action of the additive group $\maps(\X, \R) \simeq \R^n$ on $\maps(\U, \R) \simeq \R^m$. More precisely, the map $\actAdv{} : \maps(\X, \R) \times \maps(\U, \R) \to \maps(\U, \R)$, satisfies:
\begin{align*}
    &\actAdv{0} = \id{\U}\\
    &\actAdv{\delta_1 + \delta_2} = \actAdv{\delta_1} \circ \actAdv{\delta_2}
\end{align*}
and, since $\lintransf$ is injective, for every distinct pair $\delta_1 \neq \delta_2$ there correspond distinct transformations $\actAdv{\delta_1} \neq \actAdv{\delta_2}$. It immediately follows that the ``reward space'' $\maps(\U, \R)$ can be partitioned by the orbits of this group action (henceforth referred to as $\actAdvsym$-orbits), which are affine $n$-dimensional subspaces. This allows us to align our treatment with \cite{mustafin2025mdpgeometrynormalizationreward} and define an equivalence relation between model-coupled MDPs.
\begin{definition}
    We consider the reward functions $\cost_1, \cost_2$ of two model-coupled MDPs to be equivalent, and we write $\cost_1 \eqrel \cost_2$, iff they belong to the same $\actAdvsym$-orbit. In symbols 
    \begin{align*}
        \cost_1 \eqrel \cost_2 ~\Longleftrightarrow~ \exists \delta \in \maps(\X, \R) : \cost_2 = \actAdv{\delta}(\cost_1).
    \end{align*}
    Two model-coupled MDPs with equivalent reward functions are said to be affine-equivalent.
\end{definition}
\begin{remark}
    For model-coupled MDPs, each MDP is uniquely determined by its reward function. Thus, in this context, we will use ``MDP'' and ``reward function'' interchangeably.
\end{remark}
\noindent
Next, we recall, without proof, the result provided in \cite{mustafin2025mdpgeometrynormalizationreward} showing that this group action preserves the advantage functions.
\begin{proposition}[{\cite[Theorem 3.5]{mustafin2025mdpgeometrynormalizationreward}}]\label{lem:advantage_preservation}
    If $\cost \eqrel \cost^\prime$ are two equivalent reward functions, then:
    \begin{align*}
        \adv{\policy}{\cost^\prime} = \adv{\policy}{\cost}, \quad \forall \policy \in \Sect.
    \end{align*}
    Moreover, the corresponding value functions are related by:
    \begin{align*}
        \vvarg{\policy, \cost^\prime} = \vv - \delta, \quad \forall \policy \in \Sect.
    \end{align*}
\end{proposition}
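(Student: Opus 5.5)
The plan is to work entirely in the vector representation of Section~\ref{sec:framework}, where everything reduces to linear algebra. Since $\cost \eqrel \cost'$, there is $\delta \in \maps(\X,\R)$ with $\cost' = \actAdv{\delta}(\cost)$. By \eqref{eq:reward_transform_representation} this reads $\Cost' = \Cost + (\disc\F - \Proj)\Delta$, where $\Delta$ represents $\delta$.

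The first check is that $\lintransf(\delta)$ really is represented by $(\disc\F - \Proj)\Delta$. Expanding the definition of $\lintransf$, the diagonal term $\delta(\proj(\act))\disc\f(\proj(\act),\act)$ merges with the sum over $y \neq \proj(\act)$. This gives $\lintransf(\delta)(\act) = \disc\sum_{y}\f(y,\act)\delta(y) - \delta(\proj(\act))$, which is exactly row $\IdxU(\act)$ of $\disc\F\Delta - \Proj\Delta$.

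I will prove the value-function statement first, since the advantage statement follows from it.

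\textbf{Value functions.} Fix a policy $\policy$ with matrix $\Pi$. Using $\Pi\Proj = I_n$ and $\Ffb{\policy} = \Pi\F$, we get
\begin{align*}
\Costfb{\policy}' = \Pi\Cost' = \Costfb{\policy} + (\disc\Ffb{\policy} - I_n)\Delta .
\end{align*}
Substituting into \eqref{eq:value_reward_relation} gives
\begin{align*}
\Varg{\policy,\cost'} = (I_n - \disc\Ffb{\policy})^{-1}\Costfb{\policy} - (I_n-\disc\Ffb{\policy})^{-1}(I_n - \disc\Ffb{\policy})\Delta = \V - \Delta .
\end{align*}
This is $\vvarg{\policy,\cost'} = \vv - \delta$. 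Invertibility of $I_n - \disc\Ffb{\policy}$ is already guaranteed, because $\disc<1$ and $\Ffb{\policy}$ is row-stochastic.

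\textbf{Advantage functions.} The definition of $\adv{\policy}{\cost}$ has the vector form
\begin{align*}
\adv{\policy}{\cost} \;\leftrightarrow\; \Cost + \disc\F\V - \Proj\V ,
\end{align*}
since $\vv(\proj(\act))$ corresponds to $\Proj\V$. Plugging in $\Cost'$ and $\V - \Delta$ gives
\begin{align*}
\Cost + \disc\F\Delta - \Proj\Delta + \disc\F\V - \disc\F\Delta - \Proj\V + \Proj\Delta = \Cost + \disc\F\V - \Proj\V .
\end{align*}
The $\Delta$-terms cancel pairwise, so $\adv{\policy}{\cost'} = \adv{\policy}{\cost}$ for every $\policy \in \Sect$.

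There is no real obstacle here. The only point needing care is the bookkeeping identity that $\lintransf$ is represented by $\disc\F - \Proj$ with the correct signs, in particular that the diagonal transition term is counted once. Once that is settled, the rest is the two cancellations above, driven by $\Pi\Proj = I_n$.
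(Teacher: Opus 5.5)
Your proof is correct. The paper does not prove this proposition itself: it recalls the result without proof from \cite[Theorem 3.5]{mustafin2025mdpgeometrynormalizationreward}, so there is no internal argument to compare yours against.

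Your direct computation is the natural one, and it matches the manipulations the paper uses elsewhere:
\begin{itemize}
\item The step $\Pi(\disc\F - \Proj)\Delta = -(I_n - \disc\Ffb{\policy})\Delta$ is exactly the one in the uniqueness part of Theorem~\ref{th:normal_form_characterization}.
\item Your value-function identity is the special case $\Fapx = \F$ of Proposition~\ref{prop:modified_transformation}. Its proof compares the two Bellman equations in the same way.
\end{itemize}

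You can shorten the advantage part. Your vector form of the advantage, $\Cost + \disc\F\V - \Proj\V = \Cost + \B\V = \mathcal{L}^{\V}\Cost$, is precisely Lemma~\ref{lem:transform_as_advantage}. Once you have $\vvarg{\policy,\cost'} = \vv - \delta$, the homomorphism property of the action gives
\begin{align*}
\adv{\policy}{\cost'} = \actAdv{\vv - \delta}\bigl(\actAdv{\delta}(\cost)\bigr) = \actAdv{\vv}(\cost) = \adv{\policy}{\cost}.
\end{align*}
This makes the pairwise cancellation of the $\Delta$-terms a direct consequence of the group structure.
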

\noindent
In view of the previous proposition, we will refer to the transformations~\eqref{eq:reward_transform} as \emph{advantage-preserving}. Another useful property shown in \cite{mustafin2025mdpgeometrynormalizationreward} is recalled in the following lemma.
\begin{lemma}\label{lem:transform_as_advantage}
    For any reward function $\cost$ and deterministic policy $\policy$, it holds
    \vspace{-0.2cm}
    \begin{align*}
        \actAdv{\scalebox{0.7}{$\vv$}}(\cost) = \adv{\policy}{\cost}.
    \end{align*}
\end{lemma}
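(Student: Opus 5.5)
The plan is to compute $\actAdv{\scalebox{0.7}{$\vv$}}(\cost)$ pointwise from the definition \eqref{eq:reward_transform} and recognize the result as the advantage function. Fix $\act \in \U$ and set $\state = \proj(\act)$. By definition,
\begin{align*}
    \actAdv{\scalebox{0.7}{$\vv$}}(\cost)(\act) = \cost(\act) + \vv(\state)\bigl(\disc \f(\state, \act) - 1\bigr) + \disc \sum_{y \neq \state} \vv(y) \f(y, \act).
\end{align*}
The key step is to merge the diagonal term $\disc\, \vv(\state)\f(\state,\act)$ with the off-diagonal sum. This produces the full sum $\disc \sum_{z \in \X} \f(z,\act)\vv(z)$, and the remaining term is $-\vv(\state)$. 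The result is then exactly $\cost(\act) + \disc\sum_{z}\f(z,\act)\vv(z) - \vv(\proj(\act))$, which is $\adv{\policy}{\cost}(\act)$ by definition.

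As a cross-check, and as a more compact proof, I will also run the argument in the vector representation \eqref{eq:reward_transform_representation}. There the claim reads
\begin{align*}
    \Cost + (\disc \F - \Proj)\V = \Cost + \disc\F\V - \Proj\V .
\end{align*}
The $i$-th component of $\Proj\V$ is the value at the state owning action $i$, and the $i$-th component of $\F\V$ is the expected next value. So the right-hand side is the vector representation of the advantage function, and the identity is immediate.

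I do not expect a genuine obstacle, since the statement is essentially definitional. The only point requiring care is the bookkeeping in the diagonal term of $\lintransf$: the self-transition $\f(\proj(\act),\act)$ is split off in the definition of $\lintransf$ and must be recombined correctly. I will also note, as a consistency check, that composing with $\policy$ gives $0$ by \eqref{eq:bellman_equation_v_adv}. By Proposition~\ref{lem:advantage_preservation}, this is consistent with the transformed reward having value $\vv - \vv = 0$ under $\policy$.
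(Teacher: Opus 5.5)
Your proof is correct and complete. Unfolding \eqref{eq:reward_transform} with $\delta = \vv$ and merging the self-transition term $\disc\,\vv(\proj(\act))\f(\proj(\act),\act)$ into the off-diagonal sum gives exactly $\cost(\act) + \disc\sum_{z\in\X}\f(z,\act)\vv(z) - \vv(\proj(\act)) = \adv{\policy}{\cost}(\act)$, and your vector check with $\Cost + (\disc\F - \Proj)\V$ is the same identity.

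The paper gives no proof of its own here (it recalls the lemma from Mustafin et al.), and your direct computation is the natural argument. Your computation in fact shows $\actAdv{\delta}(\cost)(\act) = \cost(\act) + \disc\sum_{z\in\X}\f(z,\act)\delta(z) - \delta(\proj(\act))$ for every $\delta$, so it uses neither the determinism of $\policy$ nor the Bellman equation. Your final consistency check is therefore optional.
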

\noindent
In summary, if we set $\delta$ as the value function associated to a given policy and reward function, the corresponding transformation maps the reward function into its advantage function relative to that policy. The above results lead to the following corollary, which follows from the properties of the advantage function $\adv{\policy^*\!}{\cost}$ relative to an optimal policy $\policy^*$.
\begin{corollary}\label{cor:ideal_transformation}
    For any reward function $\cost$, if $\vvarg{\policy^*\!, \cost}$ is the optimal value function, then the transformed reward function $\actAdv{\scalebox{0.7}{$\vvarg{\policy^*\!, \cost}$}}\!(\cost)$ is nonpositive and the corresponding MDP is in normal form.
\end{corollary}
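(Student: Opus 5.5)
Set $\delta^* \coloneqq \vvarg{\policy^*\!, \cost}$ and $\cost' \coloneqq \actAdv{\delta^*}(\cost)$. By Lemma~\ref{lem:transform_as_advantage}, applied with $\policy = \policy^*$, we have $\cost' = \adv{\policy^*}{\cost}$. The proof then has two parts. First we show that this advantage function is nonpositive. Then we use Proposition~\ref{lem:advantage_preservation} to show that the optimal value function of $\cost'$ vanishes.

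\emph{Nonpositivity.} Because $\policy^*$ is optimal, its value function satisfies the Bellman optimality equation
\begin{align*}
\vvarg{\policy^*\!,\cost}(\state) = \max_{\act \in \U_\state}\Bigl[\cost(\act) + \disc \sum_{z} \f(z,\act)\vvarg{\policy^*\!,\cost}(z)\Bigr].
\end{align*}
This equation is the step that needs justification. If we may not invoke it directly, we argue by contradiction using policy improvement. Suppose $\adv{\policy^*}{\cost}(\act) > 0$ for some $\act$ with $\proj(\act) = \state$. Let $\policy'$ agree with $\policy^*$ everywhere except that $\policy'(\state) = \act$. From \eqref{eq:value_reward_relation}, the difference of value vectors is
\begin{align*}
\Varg{\policy',\cost} - \Varg{\policy^*\!,\cost} = (I_n - \disc \Ffb{\policy'})^{-1}\,\Pi'\,\adv{\policy^*}{\cost},
\end{align*}
where $\Pi'$ is the matrix of $\policy'$. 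This identity follows by writing $\Costfb{\policy'} = \Pi'\Cost$ and substituting the definition of the advantage. The vector $\Pi'\adv{\policy^*}{\cost}$ is nonnegative and has a positive entry at $\state$, because $\adv{\policy^*}{\cost}\circ\policy^* = 0$ by \eqref{eq:bellman_equation_v_adv}. The matrix $(I_n-\disc\Ffb{\policy'})^{-1} = \sum_t \disc^t \Ffb{\policy'}^t$ is entrywise nonnegative with diagonal entries at least $1$. Hence $\vvarg{\policy',\cost}(\state) > \vvarg{\policy^*\!,\cost}(\state)$, which contradicts the optimality of $\policy^*$. Therefore $\cost' \le 0$.

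\emph{Normal form.} By Proposition~\ref{lem:advantage_preservation}, $\vvarg{\policy,\cost'} = \vvarg{\policy,\cost} - \delta^*$ for every $\policy$. Since $\delta^*$ does not depend on $\policy$, the optimal policies for $\cost$ and $\cost'$ coincide. The optimal value of $\cost'$ is therefore $\vvarg{\policy^*\!,\cost} - \delta^* = 0$, so the MDP with reward $\cost'$ is in normal form.

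We can also check this directly. Since $\cost' \le 0$, every value function of $\cost'$ is nonpositive, because $(I_n-\disc\Ffb{\policy})^{-1}$ is nonnegative. On the other hand, $\cost'\circ\policy^* = \adv{\policy^*}{\cost}\circ\policy^* = 0$, so $\policy^*$ attains value zero. The main obstacle is the nonpositivity step, specifically establishing the Bellman optimality condition. The improvement argument above handles it using only the notation already introduced.
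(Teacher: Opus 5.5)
Your proof is correct and follows essentially the same route as the paper. Lemma~\ref{lem:transform_as_advantage} identifies $\actAdv{\vvarg{\policy^*\!,\cost}}(\cost)$ with $\adv{\policy^*}{\cost}$, which is nonpositive by optimality of $\policy^*$, and Proposition~\ref{lem:advantage_preservation} then shifts every value function by the same $\vvarg{\policy^*\!,\cost}$, so the optimal value becomes zero. The only difference is that the paper cites the nonpositivity of the optimal advantage as a known property, whereas your one-step policy-improvement argument proves it explicitly and correctly.
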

\noindent
Leveraging these observations, we provide the following result, which is a consequence of the faithfulness of the action~$\actAdvsym$.
\begin{theorem}[\textbf{Characterization of normal MDPs}]\label{th:normal_form_characterization}
    Consider a class $\mdparg{\cdot}{\disc \f}$ of model-coupled MDPs. Then:
    \begin{enumerate}[label=(\alph*), ref=\thetheorem-(\alph*)]
        \item\label{lem:uniqueness_of_normal_form} In every $\actAdvsym$-orbit there exists a unique normal MDP.
        \item\label{prop:nec_suff_condition_for_normality} An MDP $\mdparg{\cost}{\disc \f}$ is in normal form if and only if its reward function satisfies
        \begin{align}\label{eq:sufficient_condition_for_normality}
            \max_{\act \in \U_\state}\, \cost(\act) = 0, \quad \forall \state \in \X.
        \end{align}
    \end{enumerate}
\end{theorem}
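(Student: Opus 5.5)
Part (b) is the engine, and (a) follows from it together with Proposition~\ref{lem:advantage_preservation} and the faithfulness of $\actAdvsym$.

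\emph{Part (b).} The plan rests on the Bellman optimality equation for the optimal value function $\vvarg{\policy^*\!,\cost}$. It says that $\vvarg{\policy^*\!,\cost}$ is the unique fixed point of the contraction
\begin{align*}
    v(\state) \mapsto \max_{\act\in\U_\state}\Bigl[\cost(\act) + \disc\sum_{z\in\X}\f(z,\act)v(z)\Bigr],
\end{align*}
and this is a standard fact for discounted finite MDPs.

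For the direction ``normal $\Rightarrow$ \eqref{eq:sufficient_condition_for_normality}'', set $\vvarg{\policy^*\!,\cost}=0$ in this equation. It reduces immediately to $\max_{\act\in\U_\state}\cost(\act)=0$ for every $\state$.

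For the converse, assume \eqref{eq:sufficient_condition_for_normality}. Then $v\equiv 0$ satisfies the optimality equation, so by uniqueness of the fixed point $\vvarg{\policy^*\!,\cost}=0$. An alternative argument avoids the fixed-point characterization. Since $\cost\le 0$, every policy has $\vv\le 0$ by \eqref{eq:value_funct_def}. On the other hand, the greedy policy choosing a zero-reward action in each fiber has $\costfb{\policy}=0$, hence $\vv=0$ by \eqref{eq:value_reward_relation}. So the maximum over policies is $0$.

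\emph{Part (a), existence.} Given $\cost$, Corollary~\ref{cor:ideal_transformation} already provides $\actAdv{\vvarg{\policy^*\!,\cost}}(\cost)$. This reward lies in the $\actAdvsym$-orbit of $\cost$ and is normal.

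\emph{Part (a), uniqueness.} Suppose $\cost_1,\cost_2$ are both normal and lie in the same orbit, so $\cost_2=\actAdv{\delta}(\cost_1)$ for some $\delta$. Proposition~\ref{lem:advantage_preservation} gives $\vvarg{\policy,\cost_2}=\vvarg{\policy,\cost_1}-\delta$ for every $\policy$. Because the shift by $\delta$ is the same for all policies, the pointwise optimum shifts by the same amount, so an optimal policy for $\cost_1$ is optimal for $\cost_2$ and
\begin{align*}
    \vvarg{\policy^*\!,\cost_2}=\vvarg{\policy^*\!,\cost_1}-\delta .
\end{align*}
Both optimal value functions vanish, which forces $\delta=0$. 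Hence $\cost_2=\actAdv{0}(\cost_1)=\cost_1$; faithfulness is what guarantees that $\delta$ is uniquely determined by the pair in the first place.

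\emph{Expected obstacle.} The only delicate point is justifying that optimality is preserved along an orbit, namely that $\policy^*$ remains optimal after the shift. This holds because the shift $-\delta$ is independent of $\policy$, and the partial order on value functions is invariant under adding a common function. The rest is routine, relying on the known existence of a uniformly optimal deterministic stationary policy.
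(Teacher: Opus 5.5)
Your proof is correct. Part (a) is essentially the paper's argument: existence comes from Corollary~\ref{cor:ideal_transformation}, and uniqueness holds because moving along an $\actAdvsym$-orbit shifts every value function by the same $\delta$. The paper carries this out through the explicit computation $\Costfb{\policy^*}^\prime = -(I_n - \disc \Ffb{\policy^*})\Delta$ and the invertibility of $I_n - \disc \Ffb{\policy^*}$. When it writes $\costfb{\policy^*}^\prime = 0$, it tacitly uses that $\policy^*$ remains optimal for the second normal reward. That is precisely the step you make explicit.

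Part (b) is where your route genuinely differs. The paper obtains ``normal $\Rightarrow$ \eqref{eq:sufficient_condition_for_normality}'' from Corollary~\ref{cor:ideal_transformation} together with part (a). For the converse it uses Lemma~\ref{lem:transform_as_advantage} to identify $\cost$ with $\adv{\hat{\policy}}{\cost} \leq 0$, and then appeals to the standard fact that nonpositive advantages certify optimality. You instead read both directions off the Bellman optimality equation. This makes (b) independent of (a) and of the orbit machinery, at the price of importing the fixed-point characterization of $\vvarg{\policy^*\!,\cost}$; the paper itself uses that fact later through $\bellman$.

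Your alternative converse is the most elementary of all. Since $\cost \leq 0$ forces $\vv \leq 0$ for every $\policy$, and the greedy zero-reward policy attains $0$, no advantage or policy-improvement argument is needed. The paper's version, in turn, has the merit of presenting (b) as a consequence of the orbit structure, in line with its overall theme.

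Two cosmetic remarks. Your opening sentence says (a) follows from (b), but your proof of (a) never uses (b). Also, faithfulness plays no role in uniqueness, since $\delta = 0$ already gives $\cost_2 = \actAdv{0}(\cost_1) = \cost_1$.
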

\begin{proof}
    $(a)$ The existence follows directly from Corollary~\ref{cor:ideal_transformation}. To show uniqueness assume $\policy^*$ is an optimal policy for the MDP and assume the latter to be in normal form. In particular, by \eqref{eq:value_reward_relation}, this implies that $\costfb{\policy^*} = 0$. Now, consider any normal reward function $\cost^\prime$ in the same orbit, i.e., such that $\costfb{\policy^*}^\prime = 0$ and $\actAdv{\delta}(\cost) = \cost^\prime$ for some $\delta \in \maps(\X, \R)$. From the latter, written in vector form, we have that
    \begin{align*}
        \Costfb{\policy^*}^\prime &= \bigl( \mathcal{L}^{\Delta}\Cost \bigr)_{\policy^*}\\
        &= \bigl( \Cost + \left( \disc \F - \Proj \right) \Delta \bigr)_{\policy^*}\\
        &= \bigl( \underbrace{\Pi^*\Cost}_{= 0} + \,\Pi^*\!\left( \disc \F - \Proj \right) \Delta \bigr)\\
        &= -\bigl( I_n - \disc \Ffb{\policy^*} \bigr) \Delta
    \end{align*}
    and thus, from the normality of $\Cost^\prime$,
    \begin{align*}
        0 = \Costfb{\policy^*}^\prime = -\bigl( I_n - \disc \Ffb{\policy^*} \bigr) \Delta ~\Longleftrightarrow~ \Delta = 0,
    \end{align*}
    since $(I_n - \disc \F_{\policy^*}) \in \gln$. Hence, $\cost^\prime = \actAdv{0}(\cost) = \cost$, which proves the uniqueness.\newline $(b)$ By putting together Corollary~\ref{cor:ideal_transformation} and Theorem~\ref{lem:uniqueness_of_normal_form} it follows that the reward function of a normal MDP is nonpositive and maps to zero at least one action in the fiber of each state, implying \eqref{eq:sufficient_condition_for_normality}. Conversely, if we assume \eqref{eq:sufficient_condition_for_normality}, the reward function is nonpositive and there exists a policy $\hat{\policy} \in \Sect$ satisfying $\hat{\policy}(\state) \in \argmax_{\act \in \U_\state} \cost(\act)$, for all $\state \in \X$, such that $\vvarg{\hat{\policy}, \cost} = 0$, see \eqref{eq:value_reward_relation}. Now, by Lemma~\ref{lem:transform_as_advantage},
    \begin{align*}
        \adv{\hat{\policy}}{\cost} = \actAdv{\vvarg{\hat{\policy}, \cost}}(\cost) = \actAdv{0}(\cost) = \cost \leq 0,
    \end{align*}
    meaning that any action differing from the one selected by $\hat{\policy}$ in the same fiber yields a nonpositive advantage. Therefore, $\hat{\policy}$ is optimal and, since its value function is zero, the MDP is in normal form.
\end{proof}
\noindent 
Based on this result, we give the following definition.
\begin{definition}[\textbf{Normal set}]
    We define the normal set for a class of comparable MDPs as the subset $\normset \subset \maps(\U, \R)$ consisting of all the reward functions satisfying condition \eqref{eq:sufficient_condition_for_normality}:
    \begin{align}
        \normset \coloneqq \left\{\cost \in \maps(\U, \R) :~ \max_{\act \in \U_\state}\, \cost(\act) = 0, ~ \forall \state \in \X\right\}.
    \end{align}
\end{definition}
\noindent
Theorem~\ref{prop:nec_suff_condition_for_normality} motivates the term \qmarks{normal} set, as it is exactly the set of all reward functions in normal form. The following result characterizes the structure of this set and shows that the reward space $\maps(\U, \R)$ has a fiber bundle structure.
\begin{theorem}[\textbf{Characterization of the normal set}]\label{th:normal_set_characterization}
    Let $\normset$ be the normal set of a class $\mdparg{\cdot}{\disc \f}$ of model-coupled MDPs, equipped with the relative topology\footnote{The sets $\maps(\U, \R)$ and $\maps(\X, \R)$ are endowed with the topologies induced by the vector identification introduced in the previous section.}. Then:
    \begin{enumerate}[label=(\alph*), ref=\thetheorem-(\alph*)]
        \item\label{th:normal_set_sections} The normal set $\normset$ admits a decomposition as a union
        \begin{align}
            \normset = \!\!\bigcup_{\policy \in \Sect}\!\! \normset_\policy,
        \end{align}
        where $\normset_\policy$, defined as
        \begin{align}
            \normset_{\policy} \coloneqq \{\cost \in \maps(\U, \R) : \cost \leq 0, \costfb{\policy} = 0\},
        \end{align}
        is the set of normal rewards for which $\policy$ is optimal.
        \item\label{th:reward_space_as_principal_bundle} The reward space $\maps(\U, \R)$ is homeomorphic to the product $\normset \times \maps(\X, \R)$ and the map
        \begin{subequations}\label{eq:bundle_projection_map}
            \begin{align}
                \mathcal{P} : \maps(\U, \R) &\to \normset\\
                \cost &\mapsto \actAdv{\vvarg{\policy^*\!, \cost}}(\cost),
            \end{align}
        \end{subequations}
        is a continuous projection which makes it the total space of a principal bundle over $\normset$.
    \end{enumerate}
\end{theorem}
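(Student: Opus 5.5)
For part (a), I would prove the two inclusions separately. First take $\cost \in \normset_\policy$ for some $\policy \in \Sect$. Then $\cost \leq 0$ and $\costfb{\policy} = 0$, so every fiber $\U_\state$ contains the action $\policy(\state)$ with zero reward while all other rewards are nonpositive. Hence $\max_{\act\in\U_\state}\cost(\act)=0$ for all $\state$, i.e.\ $\cost\in\normset$.

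Conversely, take $\cost\in\normset$. Define $\policy$ by choosing, for each state, an action $\policy(\state)\in\argmax_{\act\in\U_\state}\cost(\act)$; this is possible since fibers are finite. Then $\cost\leq0$ and $\costfb{\policy}=0$, so $\cost\in\normset_\policy$. To justify the phrase ``the set of normal rewards for which $\policy$ is optimal'', I would reuse the argument of Theorem~\ref{prop:nec_suff_condition_for_normality}(b). For $\cost\in\normset_\policy$, \eqref{eq:value_reward_relation} gives $\vv=0$, and Lemma~\ref{lem:transform_as_advantage} gives $\adv{\policy}{\cost}=\cost\leq0$, so $\policy$ is optimal. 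Conversely, if $\cost\in\normset$ and $\policy$ is optimal, then $\vv=0$ by normality. Lemma~\ref{lem:transform_as_advantage} then gives $\cost=\adv{\policy}{\cost}$, which vanishes on $\policy$ by \eqref{eq:bellman_equation_v_adv}, so $\costfb{\policy}=0$.

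For part (b), I would first check that $\mathcal{P}$ is well defined and lands in $\normset$. Well-definedness holds because the optimal value function is unique even when the optimal policy is not. That $\mathcal{P}(\cost)\in\normset$ is Corollary~\ref{cor:ideal_transformation} combined with Theorem~\ref{prop:nec_suff_condition_for_normality}. Surjectivity holds because any $\cost\in\normset$ has optimal value zero, so $\mathcal{P}(\cost)=\actAdv{0}(\cost)=\cost$.

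For continuity I would write the optimal value function as $\cost\mapsto\max_{\policy\in\Sect}\vv$, taken componentwise. This is legitimate because an optimal policy dominates every other policy pointwise. Each map $\cost\mapsto\vv=(I_n-\disc\Ffb{\policy})^{-1}\Pi\Cost$ is linear, and $\Sect$ is finite, so the pointwise maximum is continuous, indeed piecewise linear. Continuity of $\mathcal{P}$ then follows from the explicit formula \eqref{eq:reward_transform_representation}.

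The candidate trivialization is
\[
\psi:\maps(\U,\R)\to\normset\times\maps(\X,\R),\qquad \cost\mapsto\bigl(\mathcal{P}(\cost),\,\vvarg{\policy^*\!,\cost}\bigr),
\]
with inverse $(\nu,\delta)\mapsto\actAdv{-\delta}(\nu)$. Both maps are continuous by the above. To verify that they are mutually inverse, I would use Proposition~\ref{lem:advantage_preservation}. It implies that the optimal value of $\actAdv{-\delta}(\nu)$ equals that of $\nu$ plus $\delta$, hence equals $\delta$, because optimal policies are preserved (advantages are unchanged) and $\nu$ has optimal value zero. Then $\mathcal{P}(\actAdv{-\delta}(\nu))=\actAdv{\delta}\actAdv{-\delta}(\nu)=\nu$. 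In the other direction, $\actAdv{-v^*}\actAdv{v^*}(\cost)=\cost$, where $v^*=\vvarg{\policy^*\!,\cost}$.

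For the principal bundle structure, I would let $\maps(\X,\R)$ act by $\cost\mapsto\actAdv{-\delta}(\cost)$. This is a right action since the group is abelian. By Proposition~\ref{lem:advantage_preservation} it preserves $\mathcal{P}$, so it is fiber-preserving. The fibers $\mathcal{P}^{-1}(\nu)$ are exactly the $\actAdvsym$-orbits by Theorem~\ref{lem:uniqueness_of_normal_form}, so the action is transitive on each fiber. It is free by faithfulness, that is, by injectivity of $\lintransf$. Equivariance of $\psi$ follows from the computation above: $\psi(\actAdv{-\delta_2}\cost)=(\mathcal{P}(\cost),\,v^*+\delta_2)$.

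The main obstacle is the bookkeeping of how optimal values shift under the action, namely the sign conventions in Proposition~\ref{lem:advantage_preservation}. In particular one must check that the optimal policy is invariant along orbits, so that $\vvarg{\policy^*\!,\cdot}$ transforms by exactly $\pm\delta$. Once this is settled, everything else is direct.
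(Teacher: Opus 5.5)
Your proof is correct. Part (b) follows the paper's skeleton: the same global trivialization $\cost\mapsto(\mathcal{P}(\cost),\vvarg{\policy^*\!,\cost})$ with inverse $(\nu,\delta)\mapsto\nu-\lintransf(\delta)$, and fibers identified with $\actAdvsym$-orbits through uniqueness of the normal representative. Where you differ is in how continuity of $\mathcal{P}$ is obtained.

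The paper factors $\mathcal{P}=(q\rvert_{\normset})^{-1}\circ q$ through the quotient map $q$ onto $\maps(\U,\R)/\maps(\X,\R)$. It then argues that $q\rvert_{\normset}$ is a closed map, because $q^{-1}(q(U))=\{\nu-\lintransf(\delta):\nu\in U,\ \delta\in\maps(\X,\R)\}$ is closed for every closed $U\subseteq\normset$. You instead write the optimal value function as the componentwise maximum of the finitely many linear maps $\Cost\mapsto(I_n-\disc\Ffb{\policy})^{-1}\Pi\Cost$. This makes $\mathcal{P}(\Cost)=\Cost+\B\,\Varg{\policy^*\!,\cost}$ visibly continuous, in fact piecewise linear.

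Your route is more elementary, and at this step more complete. The paper asserts the closedness of $q^{-1}(q(U))$ without justification, and a closed set plus a subspace need not be closed. Filling that step needs some control of $\cost\mapsto\vvarg{\policy^*\!,\cost}$ of the kind you provide; the bound $\infnorm{\Varg{\policy^*\!,\cost}}\le\infnorm{\Cost}/(1-\disc)$ would already suffice. What the paper's route buys in exchange is an explicit homeomorphism between $\normset$ and the orbit space, which your argument contains only implicitly through $\psi$.

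You also go beyond the paper in two smaller respects:
\begin{itemize}
\item In (a) you prove both directions of the claim that $\normset_\policy$ is exactly the set of normal rewards for which $\policy$ is optimal. The paper only argues that $\policy$ is optimal on $\normset_\policy$.
\item In (b) you check freeness, transitivity and equivariance explicitly, which the paper leaves implicit. This includes the sign choice $\delta\mapsto\actAdv{-\delta}$, which makes $\psi(\actAdv{-\delta_2}(\cost))=(\mathcal{P}(\cost),\vvarg{\policy^*\!,\cost}+\delta_2)$ match the equivariance axiom.
\end{itemize}
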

\begin{proof}
    $(a)$ The decomposition naturally follows from the definition of $\normset$, while the policy $\policy$ is optimal for any $\cost \in \normset_{\policy}$ since it satisfies $\cost_{\policy} = 0$ and has zero value function.\newline
    $(b)$ Since the action is faithful, each $\actAdvsym$-orbit is isomorphic to the group $\maps(\X, \R)$ and also intersects the normal set in just one point by Theorem~\ref{lem:uniqueness_of_normal_form}. Moreover, by Corollary~\ref{cor:ideal_transformation} and Theorem~\ref{lem:uniqueness_of_normal_form}, the map $\mathcal{P}$ defined above maps each $\actAdvsym$-orbit in its unique intersection point with $\normset$.
    Furthermore, it is surjective and idempotent, since for each $\cost \in \normset$, we have that
    \begin{align*}
        \mathcal{P}(\cost) = \actAdv{\vvarg{\policy^*\!,\cost}}(\cost) = \actAdv{0}(\cost) = \cost,
    \end{align*}
    and thus, $\mathcal{P}\rvert_{\normset} = \id{\normset}$. Therefore, we can identify $\maps(\U, \R)$ with the product $\normset\times \maps(\X, \R)$ through the mapping $\cost \mapsto \left( \mathcal{P}(\cost), \vvarg{\policy^*\!, \cost} \right)$ whose inverse is continuous and given by $(\cost_n, \delta) \mapsto \cost_n - \lintransf(\delta)$, where $\lintransf$ is the linear transform characterizing the $\actAdvsym$-action, see \eqref{eq:reward_transform}. It only remains to prove that $\mathcal{P}$ is continuous. Consider the quotient space $\maps(\U, \R)/_{\sim_{\actAdvsym}} \coloneqq \maps(\U, \R)/\maps(\X, \R)$ with quotient map~$q$, whose restriction $q\rvert_{\normset}$ is continuous and bijective by construction. The projection $\mathcal{P}$ can be expressed as $\mathcal{P} = (q\rvert_{\normset})^{-1} \circ q$, and it is continuous if and only if $(q\rvert_{\normset})^{-1}$ is continuous. %
    If $U \subseteq \normset$ is an arbitrary $\normset$-closed set it is also $\maps(\U, \R)$-closed, since $\normset$ is $\maps(\U, \R)$-closed. Furthermore,
    \begin{align*}
        q^{-1}(q(U)) = \{\cost : \cost = \cost_n - \lintransf(\delta), \cost_n \in U, \delta \in \maps(\X, \R)\},
    \end{align*}
    and it is therefore $\maps(\U, \R)$-closed, implying that $q(U)$ is $(\maps(\U, \R)/_{\sim_{\actAdvsym}})$-closed, being the preimage of a closed set through a quotient map. This further implies that $q$ is also a closed map and its restriction $q\rvert_{\normset}$ is a homeomorphism.
\end{proof}
\noindent The previous theorem has two main consequences:

\hspace{0.3cm}Part~(a), which holds for all comparable MDPs, implies that a policy $\policy$ is optimal for a given MDP if and only if the corresponding $\actAdvsym$-orbit intersects $\normset_\policy$. Note that the subsets $\normset_\policy$ are not disjoint. Their intersections correspond precisely to normal rewards that admit multiple optimal policies. In particular, if the $\actAdvsym$-orbits of two comparable MDPs intersect the normal set in $\normset_{\policy^*} \setminus \bigcup_{\policy \neq \policy^*} \normset_{\policy}$, then the two MDPs share the same unique optimal policy $\policy^*$. Hence, they can be regarded as equivalent with respect to the normalization process: replacing one with the other yields, after the normalization process, a greedy policy that is optimal for both. On the other hand, suppose the $\actAdvsym$-orbit of one MDP intersects $\normset_{\policy^*} \setminus \bigcup_{\policy \neq \policy^*} \normset_{\policy}$, while the $\actAdvsym$-orbit of the other intersects $\normset_{\policy^*} \cap \normset_{\policy}$, for some other $\policy \in \Sect$. Then they cannot be considered equivalent. In fact, normalizing the first still yields the unique optimal policy $\policy^*$, which remains optimal for the second (although the other optimal policy $\policy$ would be ignored). However, normalizing the second would yield two optimal policies, only one of which is optimal for the first. A tie-breaking rule could then select a policy that is suboptimal for the first. These considerations should be taken into account when dealing with uncertainty in the model used to normalize an unknown or partially known MDP (see next section). 

\hspace{0.3cm}Part~(b) implies that any reward function $\cost$ is uniquely determined by the pair consisting of the corresponding normal reward function $\cost_n$ and optimal value function $\vvarg{\policy^*\!, \cost}$, which leads to a natural identification:
\begin{align}\label{eq:identification_reward_as_tuple}
    \cost \leftrightarrow (\cost_n, \vvarg{\policy^*\!, \cost}).
\end{align}
Figure \ref{fig:orbits} shows the principal bundle geometry of a small dimensional MDP, chosen because it can be visualized explicitly.
\begin{figure}[!t]
    \centering
    \includegraphics[width = 0.8\textwidth]{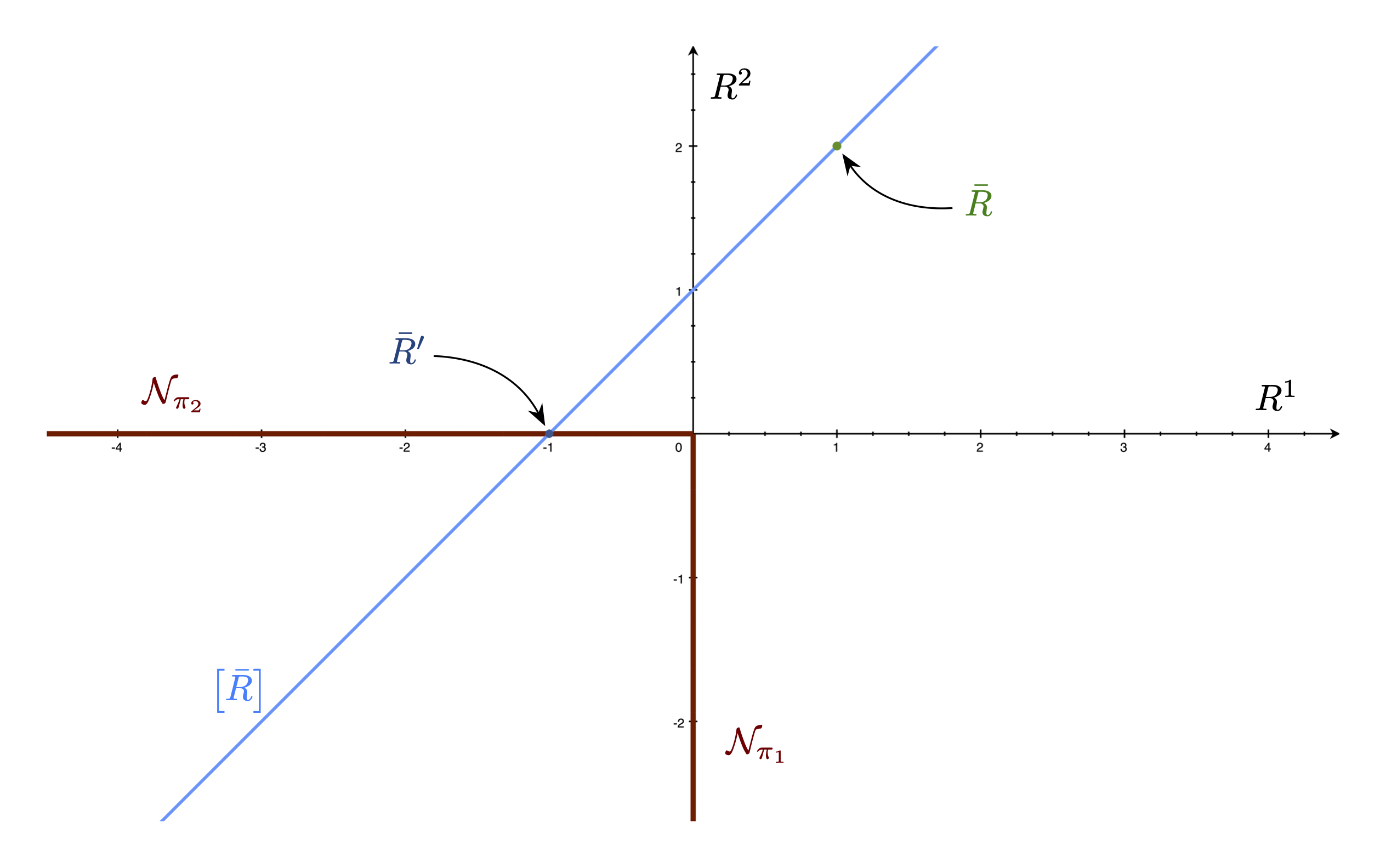}
    \caption{Representation of the space $\maps(\U, \R)$ of an MDP with just one state and two actions to choose from, for which the optimal policy is greedy regardless of the chosen reward and discount factor, with an optimal value function given by $\Varg{\policy^*\!, \cost}(\state) = \max\{\Cost^1, \Cost^2\}/(1 - \disc)$. The dark red set identifies a section of the normal set, in this case defined by $\normset = \normset_{\policy_1} \cup \normset_{\policy_2}$, where $\normset_{\policy_i} = \{\Cost \in \R^2 : \Cost^i = 0, \Cost^j \leq 0, j \neq i\}$, while the light blue line is the $\actAdvsym$-orbit corresponding to the reward $\bar{\Cost} = (1, 2)$. Notice that the orbit intersects the normal set in just one point, corresponding to its unique normalization $\bar{\Cost}^\prime = (-1, 0)$.}
    \label{fig:orbits}
\end{figure}

\subsubsection*{MDP Symmetry and Reward Transformations.}
We conclude this subsection by briefly characterizing reward transformations that, when the original MDP is acted upon by a group $G$, preserve the symmetry. Recalling Definition \ref{def:group_invariant_mdp} and \eqref{eq:group_invariant_mdp_vector_form}, we obtain the following result.
\begin{lemma}\label{lem:reward_transform_group_invariant_mdp}
    If $(\mdparg{\cost}{\disc \f}, G)$ is a $G$-invariant MDP, then:
    \begin{align}
        \bigl[\actAdv{\delta}(\cost)\bigr] \circ \reprarg{\mathrm{u}}{g} = \actAdv{\scalebox{0.6}{$\delta \circ \reprarg{\mathrm{x}}{g}$}}(\cost),
    \end{align}
    for all $\delta \in \maps(\X, \R)$, or, in vector representation:
    \begin{align}
        A_g^{\mathrm{u}}\left( \mathcal{L}^\Delta \Cost\right) = \mathcal{L}^{A_g^{\mathrm{x}}\Delta} \Cost.
    \end{align}
\end{lemma}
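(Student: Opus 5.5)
The plan is to verify the identity pointwise on $\U$ by unfolding the definition of $\actAdv{\delta}$, and then to treat each of its three terms with one invariance property. First I rewrite the linear operator $\lintransf$ in the more symmetric form
\begin{align*}
    \lintransf(\delta)(\act) = \disc \sum_{y \in \X} \delta(y)\, \f(y, \act) - \delta(\proj(\act)),
\end{align*}
which is immediate from its definition because the diagonal term $\disc\,\delta(\proj(\act))\f(\proj(\act),\act)$ is just the $y = \proj(\act)$ summand. Evaluating at $\reprarg{\mathrm{u}}{g}(\act)$ gives
\begin{align*}
    \bigl[\actAdv{\delta}(\cost)\bigr]\bigl(\reprarg{\mathrm{u}}{g}(\act)\bigr) = \cost\bigl(\reprarg{\mathrm{u}}{g}(\act)\bigr) + \disc \sum_{y \in \X} \delta(y)\, \f\bigl(y, \reprarg{\mathrm{u}}{g}(\act)\bigr) - \delta\bigl(\proj(\reprarg{\mathrm{u}}{g}(\act))\bigr).
\end{align*}

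The three terms are handled as follows.
\begin{enumerate}[label=$\rhd$]
    \item By reward invariance, the first term equals $\cost(\act)$.
    \item For the sum, I reindex with $y = \reprarg{\mathrm{x}}{g}(z)$, which is legitimate because $\reprarg{\mathrm{x}}{g}$ is a bijection of $\X$. Transition invariance then gives $\f(\reprarg{\mathrm{x}}{g}(z), \reprarg{\mathrm{u}}{g}(\act)) = \f(z,\act)$, so the sum becomes $\disc\sum_z (\delta\circ\reprarg{\mathrm{x}}{g})(z)\,\f(z,\act)$.
    \item For the last term I need the equivariance $\proj \circ \reprarg{\mathrm{u}}{g} = \reprarg{\mathrm{x}}{g}\circ \proj$. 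With it, the last term is $(\delta\circ\reprarg{\mathrm{x}}{g})(\proj(\act))$.
\end{enumerate}
Collecting the three terms yields exactly $\actAdv{\delta\circ \reprarg{\mathrm{x}}{g}}(\cost)(\act)$.

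The main obstacle is justifying the equivariance $\proj \circ \reprarg{\mathrm{u}}{g} = \reprarg{\mathrm{x}}{g}\circ \proj$. Definition~\ref{def:group_invariant_mdp} does not state it explicitly. It is, however, implicit in the framework: the action on $\U$ must map fibers to fibers compatibly with the action on $\X$, i.e., each $\reprarg{\mathrm{u}}{g}$ is a bundle map covering $\reprarg{\mathrm{x}}{g}$, as in the corridor example where the $\U$-action permutes the fibers according to $\reprarg{\mathrm{x}}{g}$. I will state this as the standing interpretation of a symmetry of the bundle $\proj:\U\to\X$ and use it.

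For the vector form, I check the two matrix identities that carry the argument.
\begin{enumerate}[label=$\rhd$]
    \item From \eqref{eq:group_invariant_mdp_vector_form} and the orthogonality of permutation matrices, $A_g^{\mathrm{u}}\F = \F A_g^{\mathrm{x}}$.
    \item From the equivariance of $\proj$, $A_g^{\mathrm{u}}\Proj = \Proj A_g^{\mathrm{x}}$; here $\Proj$ has entries $\Proj^i_j = 1$ iff action $i$ lies in the fiber of state $j$.
\end{enumerate}
Then
\begin{align*}
    A_g^{\mathrm{u}}\bigl(\Cost + (\disc\F-\Proj)\Delta\bigr) = \Cost + (\disc\F-\Proj)A_g^{\mathrm{x}}\Delta = \mathcal{L}^{A_g^{\mathrm{x}}\Delta}\Cost,
\end{align*}
where the first step also uses $A_g^{\mathrm{u}}\Cost=\Cost$. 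Finally I will check that the index conventions match, namely that $A^{\mathrm{x}}_g\Delta$ represents $\delta\circ\reprarg{\mathrm{x}}{g}$ and $A^{\mathrm{u}}_g$ acting on a reward vector represents precomposition with $\reprarg{\mathrm{u}}{g}$. This is the same convention under which \eqref{eq:group_invariant_mdp_vector_form} was derived.
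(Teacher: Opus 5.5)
Your proposal is correct and follows the paper's proof. The paper works in vector form with exactly your two identities: $A_g^{\mathrm{u}}\F = \F A_g^{\mathrm{x}}$, from invariance and orthogonality, and $A_g^{\mathrm{u}}\Proj\bigl(A_g^{\mathrm{x}}\bigr)^{\top} = \Proj$, which it justifies by the block-permuting structure of $A_g^{\mathrm{u}}$, i.e., precisely the fiber compatibility $\proj\circ\reprarg{\mathrm{u}}{g} = \reprarg{\mathrm{x}}{g}\circ\proj$ that you flag as implicit. Your pointwise computation is the same argument in functional notation, and stating that equivariance explicitly is worthwhile, since the identity fails without it (e.g., a symmetry that swaps two identical actions in different fibers while fixing all states).
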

\begin{proof}
    Let us apply an arbitrary action $A_g^{\mathrm{u}}$ to both sides of the update rule \eqref{eq:reward_transform_representation}:
    \begin{align*}
        A_g^{\mathrm{u}} \left( \mathcal{L}^\Delta \Cost\right) &= A_g^{\mathrm{u}} \Cost + A_g^{\mathrm{u}}\left( \disc \F - \Proj \right) \Delta\\
        &\!\stkrl{a}{=} \Cost + \left( \disc \F A_g^{\mathrm{x}} - A_g^{\mathrm{u}}\Proj \right) \Delta\\
        &= \Cost + \left( \disc \F - A_g^{\mathrm{u}}\Proj\bigl(A_g^{\mathrm{x}}\bigr)^{\top} \right) A_g^{\mathrm{x}} \Delta\\
        &\!\stkrl{b}{=} \Cost + \left( \disc \F - \Proj \right) A_g^{\mathrm{x}} \Delta,
    \end{align*}
    where (a) follows from the invariance properties \eqref{eq:group_invariant_mdp_vector_form} together with the fact that each permutation matrix is an orthogonal matrix, implying $A_g^{\mathrm{u}} \F = \F A_g^{\mathrm{x}}$. Identity (b) follows from the structure of $A_g^{\mathrm{u}}$. On one hand it permutes the row blocks corresponding to the states. On the other hand it permutes the rows within each block, which correspond to the actions in the corresponding fiber. The latter permutation has no effect on the block-diagonal matrix $\Proj = \Projexpl$, since each block contains identical rows. Hence, the effect of $A_g^{\mathrm{u}}$ on this matrix is to permute the state blocks. This can be formalized by thinking of it as a diagonal matrix $D$, where each row corresponds to a block. In this case:
    \begin{align*}
        A_g^{\mathrm{x}} D (A_g^{\mathrm{x}})^\top
    \end{align*}
    remains diagonal, with its diagonal elements permuted according to the action $\reprarg{\mathrm{x}}{g}$. Finally, the group action $\reprarg{\mathrm{x}}{g}$ permutes states with the same number of actions, and then a diagonal block $\mathds{1}_{m_i}$ may only be replaced by a block $\mathds{1}_{m_j}$ with the same size $(m_i = m_j)$, ensuring that the block-diagonal structure is preserved under the group action.
\end{proof}
\noindent
Lemma~\ref{lem:reward_transform_group_invariant_mdp} finally implies the following characterization.
\begin{proposition}[\textbf{$\boldsymbol{G}$-invariance and reward transformations}]\label{prop:g_invariance}
    Let $(\mdparg{\cost}{\disc \f}, G)$ be a $G$-invariant MDP. A reward transformation $\actAdv{\delta}$ preserves $G$-invariance if and only if $\delta$ is $G$-invariant, i.e. $\delta \circ \reprarg{\mathrm{x}}{g} = \delta$ for all $g \in G$.
\end{proposition}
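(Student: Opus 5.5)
The plan is to reduce the statement to Lemma~\ref{lem:reward_transform_group_invariant_mdp} and the faithfulness of the $\actAdvsym$-action. First I make precise what \emph{preserves $G$-invariance} means here. The transition function is untouched by $\actAdv{\delta}$, so transition invariance holds automatically. The only requirement is therefore reward invariance of the transformed reward:
\begin{align*}
    \bigl[\actAdv{\delta}(\cost)\bigr] \circ \reprarg{\mathrm{u}}{g} = \actAdv{\delta}(\cost), \quad \forall g \in G.
\end{align*}
In vector form this reads $A_g^{\mathrm{u}}\bigl(\mathcal{L}^\Delta \Cost\bigr) = \mathcal{L}^\Delta \Cost$ for all $g$.

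For the \qmarks{if} direction, assume $\delta \circ \reprarg{\mathrm{x}}{g} = \delta$, that is, $A_g^{\mathrm{x}}\Delta = \Delta$. Lemma~\ref{lem:reward_transform_group_invariant_mdp} then gives $A_g^{\mathrm{u}}(\mathcal{L}^\Delta \Cost) = \mathcal{L}^{A_g^{\mathrm{x}}\Delta}\Cost = \mathcal{L}^{\Delta}\Cost$. This is exactly reward invariance of the transformed MDP.

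For the \qmarks{only if} direction, assume the transformed reward is $G$-invariant. Applying Lemma~\ref{lem:reward_transform_group_invariant_mdp} again gives $\mathcal{L}^{A_g^{\mathrm{x}}\Delta}\Cost = \mathcal{L}^{\Delta}\Cost$, so $(\disc\F - \Proj)(A_g^{\mathrm{x}}\Delta - \Delta) = 0$. It remains to show that $\disc\F - \Proj$, the matrix of $\lintransf$, is injective. I would argue this directly rather than simply cite faithfulness. Pick any policy matrix $\Pi$ and left-multiply; since $\Pi\Proj = I_n$, this yields $-(I_n - \disc\Ffb{\policy})(A_g^{\mathrm{x}}\Delta - \Delta) = 0$. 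Because $I_n - \disc\Ffb{\policy}$ is invertible for $\disc<1$ and $\Ffb{\policy}$ row-stochastic, the same argument used in the proof of Theorem~\ref{lem:uniqueness_of_normal_form} gives $A_g^{\mathrm{x}}\Delta = \Delta$, i.e.\ $\delta\circ\reprarg{\mathrm{x}}{g} = \delta$.

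I expect no serious obstacle; the main subtlety is matching conventions. The vector identity $A_g^{\mathrm{x}}\Delta$ must correspond to $\delta\circ\reprarg{\mathrm{x}}{g}$ and not to $\delta\circ\reprarg{\mathrm{x}}{g^{-1}}$. This does not matter, however, since the condition is imposed for all $g$ in a group.
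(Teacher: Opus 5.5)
Your proposal is correct and follows essentially the same route as the paper. Both directions reduce to Lemma~\ref{lem:reward_transform_group_invariant_mdp}, and the \emph{only if} direction concludes from $B(A_g^{\mathrm{x}}\Delta - \Delta) = 0$ via injectivity of $B = \gamma F - S$. The only difference is that you verify this injectivity explicitly through $\Pi B = -(I_n - \gamma F_\pi)$, the same computation used in the uniqueness part of the normal-form theorem, whereas the paper simply invokes the injectivity of $b$ asserted earlier.
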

\begin{proof}
    $(\Rightarrow):$ If $\mathcal{L}^\Delta\Cost$ is invariant, $A_g^{\mathrm{u}}(\mathcal{L}^\Delta\Cost) = \mathcal{L}^\Delta\Cost$ for all $g \in G$. Then, by Lemma \ref{lem:reward_transform_group_invariant_mdp}, we have:
    \begin{align*}
        \underbrace{\left( \disc \F - \Proj \right)}_{= \B}\left( A_g^{\mathrm{x}} - I_n \right)\Delta = 0, \quad \forall g \in G,
    \end{align*}
    and, since $\B$ is injective:
    \begin{align*}
        (A_g^{\mathrm{x}} - I_n)\Delta = 0, \quad \forall g \in G.
    \end{align*}
    Therefore:
    \begin{align*}
        A_g^{\mathrm{x}}\Delta = \Delta ~\Longleftrightarrow~ \delta \circ \reprarg{\mathrm{x}}{g} = \delta.
    \end{align*}
    $(\Leftarrow):$ Follows directly from Lemma \ref{lem:reward_transform_group_invariant_mdp}.
\end{proof} %
\subsection{The Set of Admissible Transformations}
\label{subsec:section_1B}
So far, we have introduced and analyzed the group of advantage-preserving transformations and its action on $\maps(\U, \R)$. However, not every such transformation is useful for the purpose of (iteratively) normalizing the MDP. For instance, if the MDP is already in normal form, the only feasible transformation is the identity, corresponding to $\delta = 0$, by Theorem~\ref{lem:uniqueness_of_normal_form}. In particular, we are interested in those transformations which, when applied to a given reward function, shift the corresponding optimal value function closer to zero. By Proposition~\ref{lem:advantage_preservation}, this requirement on $\actAdv{\delta}$ can be expressed by asking for the corresponding (unique) group element $\delta$ to belong to the following set:
\begin{align}\label{eq:feasible_ball}
    \mathcal{B}_*(\cost) \coloneqq \left\{\delta : |\vvarg{\policy^*\!, \cost}(\state) - \delta(\state)| \leq |\vvarg{\policy^*\!, \cost}(\state)|, ~\forall \state \in \X\right\}.
\end{align}
\noindent
However, as we will see in the following section, it is convenient to further restrict our attention to nonpositive reward functions. Among other benefits, this restriction allows us to reformulate the normalization of the MDP as an optimal control problem using condition \eqref{eq:sufficient_condition_for_normality}. Unsurprisingly, it can always be assumed, without loss of generality, that the initial reward function is nonpositive, as justified by the following proposition, which we include for completeness.
\begin{proposition}\label{lem:wlog_reward_nonpositivity}
    Given any reward function $\cost$, the transformation
    \begin{align*}
        \cost(\act) \mapsto \cost(\act) - \max_{a \in \U} \cost(a) \leq 0
    \end{align*}
    is advantage-preserving.
\end{proposition}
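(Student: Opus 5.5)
The plan is to show that subtracting the constant $c \coloneqq \max_{a \in \U} \cost(a)$ from every action reward is exactly one of the transformations $\actAdv{\delta}$ of \eqref{eq:reward_transform}, for a suitable constant $\delta$. Advantage preservation then follows directly from Proposition~\ref{lem:advantage_preservation}.

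The first step is to compute $\lintransf(\delta)$ for a constant function $\delta \equiv k$ on $\X$. Since $\f(\cdot, \act)$ is a probability distribution on $\X$, the definition of $\lintransf$ gives
\begin{align*}
    \lintransf(\delta)(\act) = k\bigl(\disc \f(\proj(\act), \act) - 1\bigr) + \disc k \sum_{y \neq \proj(\act)} \f(y, \act) = k(\disc - 1).
\end{align*}
In vector form this is the identity $(\disc \F - \Proj)k\mathds{1}_n = k(\disc - 1)\mathds{1}_m$. It holds because $\F\mathds{1}_n = \mathds{1}_m$ by row-stochasticity, and $\Proj \mathds{1}_n = \mathds{1}_m$ by the block structure \eqref{eq:proj_mat}.

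The second step is to choose $k = c/(1-\disc)$, which is well defined since $\disc \in [0,1)$. With this choice $\lintransf(\delta) = -c$, so $\actAdv{\delta}(\cost) = \cost - c$, and this is precisely the map in the statement. Nonpositivity of the new reward is immediate from the definition of $c$. The new reward lies in the $\actAdvsym$-orbit of $\cost$, so Proposition~\ref{lem:advantage_preservation} gives $\adv{\policy}{\cost - c} = \adv{\policy}{\cost}$ for every $\policy \in \Sect$. As a by-product, all value functions shift uniformly by $c/(1-\disc)$.

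There is no real obstacle here. The only point needing care is the row-stochasticity of $\F$, which makes $\lintransf$ send constants to constants, together with the fact that $\disc < 1$, which is what allows $k$ to be solved for.
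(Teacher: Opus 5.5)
Your proposal is correct and is essentially the paper's proof. The paper also takes $\Delta = \frac{\max_j \Cost^j}{1-\disc}\mathds{1}_n$ and uses row-stochasticity of $\F$ to get $(\disc\F - \Proj)\Delta = -\bigl(\max_j \Cost^j\bigr)\mathds{1}_m$, which places the shifted reward in the $\actAdvsym$-orbit of $\Cost$, so Proposition~\ref{lem:advantage_preservation} applies; you additionally make explicit that $\Proj\mathds{1}_n = \mathds{1}_m$, which the paper uses implicitly.
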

\begin{proof}
    In vector notation, the transformation corresponds to 
    \begin{align*}
        \Delta = \frac{\max_j \Cost^j}{1 - \disc} \mathds{1}_n,
    \end{align*}
    since
    \begin{align*}
        \Cost^\prime &= \Cost + \frac{\max_j \Cost^j}{1 - \disc} (\disc \F - \Proj) \mathds{1}_n\\
        &= \Cost - \bigl(\max\nolimits_j \Cost^j\bigr) \mathds{1}_n
    \end{align*}
    where the last identity follows by the row-stochasticity of the $\F$ matrix.
\end{proof}
\noindent It is therefore reasonable to require admissible transformations to also preserve nonpositivity, namely to satisfy
\begin{align}
    \cost \leq 0 ~\implies~ \actAdv{\delta}(\cost) \leq 0,
\end{align}
which is equivalent to requiring $\delta$ to solve the following system of $m$ linear inequalities
\begin{align}\label{eq:system_inequalities_nonpositivity}
    \lintransf(\delta)(\act) \leq -\cost(\act), \quad \forall \act \in \U.
\end{align}
We denote by $\advset{\act}{\cost}$ the closed half-space defined by each of these inequalities and by $\advset{}{\cost}$ the set 
\begin{align}
    \advset{}{\cost} = \bigcap_{\act \in \U} \advset{\act}{\cost},
\end{align}
corresponding to the solutions of the system \eqref{eq:system_inequalities_nonpositivity}.
\begin{remark}\label{rem:geometric_actions}
    Each action can be identified with the boundary hyperplane of the corresponding half space $\advset{\act}{\cost}$ through the map 
    \begin{align*}
        \act \mapsto \partial\mathcal{C}_\act(\cost) = \{ \delta : \lintransf(\delta)(\act) = -\cost(\act) \}.
    \end{align*} 
    The coefficients of the defining linear-affine function $\cost(\act) + \lintransf(\cdot)(\act)$ for $\partial\mathcal{C}_\act(\cost)$ determine the entries of the action vector corresponding to $\act$, as introduced in \cite{mustafin2025mdpgeometrynormalizationreward}. Consequently, the actions of the MDP define an arrangement of hyperplanes in the ``value space'' $\maps(\X, \R)$. Moreover, for any $\policy$ it holds
    \begin{align}
        \{\vv\} = \bigcap_{\state \in \X} \partial\mathcal{C}_{\policy(\state)}(\cost).
    \end{align}
\end{remark}
\noindent The discussion above motivates the following definition.
\begin{definition}[\textbf{The admissible set}]
    For any nonpositive reward function $\cost$, we define the admissible set of transformations for the normalization problem as
    \begin{align}\label{eq:admissible_set_def}
        \feasset{\cost} &\coloneqq \advset{}{\cost} \cap \mathcal{B}_*(\cost).
    \end{align}
\end{definition}
\noindent We summarize the key properties of the admissible set in the following proposition.
\begin{proposition}[\textbf{Properties of the admissible set}]\label{prop:properties_feasible_set}
    For any nonpositive reward function $\cost$, the admissible set $\feasset{\cost}$ is nonempty, compact, and convex.
\end{proposition}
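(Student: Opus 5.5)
The plan is to check the three properties separately, using the description of $\feasset{\cost}$ as the intersection of $\advset{}{\cost}$ with $\mathcal{B}_*(\cost)$ in \eqref{eq:admissible_set_def}. Throughout, write $v^* \coloneqq \vvarg{\policy^*\!,\cost}$ for brevity.

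\emph{Nonemptiness.} I would exhibit the element $\delta = 0$. It lies in $\mathcal{B}_*(\cost)$ trivially, because $|v^*(\state) - 0| = |v^*(\state)|$. It also lies in $\advset{}{\cost}$, since $\lintransf(0) = 0 \leq -\cost$ by nonpositivity of $\cost$. A second admissible element is $\delta = v^*$. Indeed $|v^* - v^*| = 0$, and $\cost + \lintransf(v^*) = \actAdv{v^*}(\cost) \leq 0$ by Corollary~\ref{cor:ideal_transformation}, so $v^* \in \advset{}{\cost}$. This second element is not needed for nonemptiness, but it shows the set is useful.

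\emph{Convexity and closedness.} The set $\advset{}{\cost}$ is a finite intersection of closed half-spaces $\advset{\act}{\cost}$, each defined by an affine inequality in $\delta$, because $\lintransf$ is linear. So $\advset{}{\cost}$ is a closed convex polyhedron. For $\mathcal{B}_*(\cost)$, each condition $|v^*(\state) - \delta(\state)| \leq |v^*(\state)|$ constrains only the coordinate $\delta(\state)$. It confines that coordinate to the closed interval between $0$ and $2v^*(\state)$, which reduces to $\{0\}$ when $v^*(\state) = 0$. Hence $\mathcal{B}_*(\cost)$ is a product of compact intervals, that is, a closed box in $\maps(\X,\R) \simeq \R^n$. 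A box is convex and compact.

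\emph{Compactness.} The intersection of a closed set with a compact box is compact, and the intersection of two convex sets is convex, which finishes the proof.

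The only step needing care is stating explicitly that $\mathcal{B}_*(\cost)$ is bounded, since $v^*$ is a fixed finite vector determined by $\cost$. One could also remark that $v^* \leq 0$ when $\cost \leq 0$, so the box is $\prod_\state [2v^*(\state), 0]$. Otherwise the argument is routine.
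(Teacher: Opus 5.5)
Your proof is correct and follows essentially the same route as the paper: nonemptiness via the zero function (and, by Corollary~\ref{cor:ideal_transformation}, the optimal value function), and compactness and convexity from intersecting the closed convex set $\advset{}{\cost}$ with the compact convex set $\mathcal{B}_*(\cost)$. Your explicit description of $\mathcal{B}_*(\cost)$ as the box $\prod_{\state}[2\vvarg{\policy^*\!,\cost}(\state),0]$ simply spells out the compactness that the paper asserts without detail.
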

\begin{proof}
    The admissible set is nonempty since both the zero function and, by Corollary~\ref{cor:ideal_transformation}, the optimal value function belong to this set. The fact that it is compact and convex follows immediately from the fact that it is the intersection between two closed convex sets, one of which, $\mathcal{B}_*(\cost)$, is compact. 
\end{proof}
\noindent
Observe that the nonpositivity of the reward function implies the nonpositivity of each value function, see \eqref{eq:value_funct_def}, including the optimal one. Consequently, the set $\mathcal{B}_*(\cost)$ is a compact subset of the nonpositive orthant. Together with the following proposition, this provides a clear geometric interpretation of each function $\delta$ belonging to this set.
\begin{proposition}[\textbf{Admissible inputs as overestimates of the optimal value function}]\label{prop:feas_set_as_overestimates}
    For any nonpositive reward function $\cost$, every function in $\advset{}{\cost}$ is an overestimate of the corresponding optimal value function. Formally:
    \begin{align*}
        \advset{}{\cost} \subseteq \mathcal{C}_*(\cost),
    \end{align*}
    where
    \begin{align}\label{eq:overestimates_set}
        \mathcal{C}_*(\cost) \coloneqq \{\delta \in \maps(\X, \R) : \delta \geq \vvarg{\policy^*\!, \cost}\}.
    \end{align}
\end{proposition}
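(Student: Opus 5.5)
We need to show: if $\delta$ satisfies $\cost + \lintransf(\delta) \leq 0$, then $\delta \geq \vvarg{\policy^*\!,\cost}$. The plan is a monotonicity argument in vector form. Write the constraint as
\begin{align*}
    \Cost + (\disc \F - \Proj)\Delta \leq 0.
\end{align*}
We then pull it back along an optimal policy $\policy^*$ with matrix $\Pi^*$. Since $\Pi^*$ has nonnegative entries, it preserves componentwise inequalities. Using $\Pi^*\Proj = I_n$ and $\Pi^*\F = \Ffb{\policy^*}$, we get
\begin{align*}
    \Costfb{\policy^*} - (I_n - \disc\Ffb{\policy^*})\Delta \leq 0,
    \quad\text{i.e.}\quad
    (I_n - \disc\Ffb{\policy^*})\Delta \geq \Costfb{\policy^*}.
\end{align*}

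The key step is to invert $I_n - \disc\Ffb{\policy^*}$ while preserving the inequality. Its inverse is the Neumann series $\sum_{t\geq 0}\disc^t\Ffb{\policy^*}^t$. This converges because $\Ffb{\policy^*}$ is row-stochastic and $\disc<1$, and every term is an entrywise nonnegative matrix. So the inverse is a nonnegative matrix and is therefore order-preserving. Multiplying both sides by it yields
\begin{align*}
    \Delta \geq (I_n - \disc\Ffb{\policy^*})^{-1}\Costfb{\policy^*} = \Varg{\policy^*\!,\cost},
\end{align*}
by \eqref{eq:value_reward_relation}, which is the claim. Nonpositivity of $\cost$ is not actually needed for this inclusion; it only fixes the setting of the definition.

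The argument in fact shows $\delta \geq \vv$ for every policy $\policy$, so it bounds the supremum over policies, which is the optimal value. The only point requiring care, and the main obstacle, is justifying that the inverse preserves the order. I would settle this through the nonnegativity of the Neumann series just described. An alternative route is iterating the inequality $\Delta \geq \Costfb{\policy^*} + \disc\Ffb{\policy^*}\Delta$ $k$ times and letting $k\to\infty$, which gives the same conclusion.
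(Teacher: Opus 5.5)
Your proof is correct. Its skeleton coincides with the paper's. Both restrict the $m$ inequalities $\cost + \lintransf(\delta) \leq 0$ to the actions chosen by a policy, arriving at $(I_n - \disc\Ffb{\policy})(\Delta - \Varg{\policy,\cost}) \geq 0$. Both then obtain the bound for every policy and specialize to $\policy^*$; as you note, neither uses $\cost \leq 0$.

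The two routes differ at the step you single out as the obstacle. The paper never inverts $I_n - \disc\Ffb{\policy}$. With $\tilde\delta \coloneqq \delta - \vv$, it reads the inequality as $\disc\sum_{y}\ffb{\policy}(y,\state)\tilde\delta(y) \leq \tilde\delta(\state)$. It bounds the left-hand side below by $\disc\min_z\tilde\delta(z)$, since that side is a $\disc$-scaled convex combination, and minimizes over $\state$ to get $(1-\disc)\min_z\tilde\delta(z) \geq 0$. This is a discrete minimum-principle argument. It works directly on the inequality system and needs only row-stochasticity and $\disc < 1$, with no series and no matrix inverse.

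Your route instead proves inverse-positivity of $I_n - \disc\Ffb{\policy}$ through the nonnegative Neumann series. Given the closed form \eqref{eq:value_reward_relation} already in the paper, it is shorter. It also yields the gap explicitly: $\Delta - \Varg{\policy,\cost} = (I_n - \disc\Ffb{\policy})^{-1} w$, where $w \coloneqq -\Pi\bigl(\Cost + (\disc\F - \Proj)\Delta\bigr) \geq 0$ is the constraint slack. Combined with Proposition~\ref{lem:advantage_preservation}, your computation says exactly that $\vv - \delta = \vvarg{\policy, \actAdv{\delta}(\cost)} \leq 0$ because $\actAdv{\delta}(\cost) \leq 0$. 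In other words, the statement is monotonicity of policy evaluation applied to the transformed reward.
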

\begin{proof}
    If $\delta \in \advset{}{\cost}$ then
    \begin{align}\label{eq:inequalities_feasible_set}
        \costfb{\policy}(\state) + (\disc \text{$\ffb{\policy}(\state, \state)$} - 1) \delta(\state) + \sum_{y \neq \state} \disc \ffb{\policy}(y, \state) \delta(y) \leq 0,
    \end{align}
    for every state $\state \in \X$ and any fixed policy $\policy \in \Sect$. Since $\vvarg{\policy\!, \cost}$ satisfies \eqref{eq:inequalities_feasible_set} with equality (by the Bellman equation \eqref{eq:bellman_equation_v}), we can rewrite \eqref{eq:inequalities_feasible_set} as
    \begin{align}\label{eq:error_inequalities}
        \sum_{y \in \X} \disc \ffb{\policy}(y, \state) \tilde{\delta}_{\policy, \cost}(y) \leq \tilde{\delta}_{\policy, \cost}(\state),
    \end{align}
    where we have introduced the shorthand notation $\tilde{\delta}_{\policy, \cost} \coloneqq \delta - \vvarg{\policy\!, \cost}$. Since the left-hand side of \eqref{eq:error_inequalities} is a convex combination scaled by $\disc$, we have
    \begin{align*}
        \disc \min_{z \in \X} \tilde{\delta}_{\policy, \cost}(z) \leq \sum_{y \in \X} \disc \ffb{\policy}(y, \state) \tilde{\delta}_{\policy, \cost}(y) \leq \tilde{\delta}_{\policy, \cost}(\state).
    \end{align*}
    Now, since this holds for all $\state \in \X$, we can conclude that
    \begin{align*}
        \disc \min_{z \in \X} \tilde{\delta}_{\policy, \cost}(z) \leq \min_{x \in \X} \tilde{\delta}_{\policy, \cost}(x),
    \end{align*}
    or
    \begin{align*}
        (1 - \disc)\min_{z \in \X} \tilde{\delta}_{\policy, \cost}(z) \geq 0,
    \end{align*}
    which, since $\disc < 1$, implies that $\tilde{\delta}_{\policy, \cost}(\state) = \delta(\state) - \vvarg{\policy\!, \cost}(\state) \geq 0, \forall \state$. Since the policy $\policy$ is arbitrary, this holds for the optimal one as well, and this concludes the proof.
\end{proof}
\noindent
We can therefore interpret the admissible set as the collection of all nonpositive overestimates of the optimal value function $\vvarg{\policy^*\!, \cost}$, whose corresponding transformations preserve the nonpositivity of the reward function. This finally yields the following characterization.
\begin{proposition}[\textbf{Characterization of the admissible set via linear inequalities}]\label{prop:alternative_definition_feasible_set}
    The admissible set of any nonpositive reward function $\cost$ can be expressed in terms of linear inequalities as follows:
    \begin{align}\label{eq:alternative_definition_feasible_set}
        \feasset{\cost} = \left\{\delta \in \maps(\X, \R) : \delta \leq 0, ~ \actAdv{\delta}(\cost) \leq 0\right\},
    \end{align}
    where $\actAdv{\delta} : \maps(\U, \R) \to \maps(\U, \R)$ is defined as in \eqref{eq:reward_transform}.
\end{proposition}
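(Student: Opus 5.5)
We prove the set equality by double inclusion. Recall that $\feasset{\cost} = \advset{}{\cost} \cap \mathcal{B}_*(\cost)$, and that $\advset{}{\cost}$ is exactly the solution set of \eqref{eq:system_inequalities_nonpositivity}. Since $\cost \leq 0$ is fixed, \eqref{eq:system_inequalities_nonpositivity} is literally the condition $\actAdv{\delta}(\cost) = \cost + \lintransf(\delta) \leq 0$. Hence $\advset{}{\cost} = \{\delta : \actAdv{\delta}(\cost) \leq 0\}$. The whole task therefore reduces to the following claim:
\begin{align*}
    \advset{}{\cost} \cap \mathcal{B}_*(\cost) = \advset{}{\cost} \cap \{\delta : \delta \leq 0\}.
\end{align*}

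Throughout we use two facts. First, $\cost \leq 0$ implies $\vvarg{\policy^*\!, \cost} \leq 0$, by \eqref{eq:value_funct_def}. Second, by Proposition~\ref{prop:feas_set_as_overestimates}, every $\delta \in \advset{}{\cost}$ satisfies $\delta \geq \vvarg{\policy^*\!, \cost}$.

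For the inclusion ``$\subseteq$'', take $\delta \in \feasset{\cost}$. Writing $v \coloneqq \vvarg{\policy^*\!, \cost}$, the membership $\delta \in \mathcal{B}_*(\cost)$ gives $|v(\state) - \delta(\state)| \leq |v(\state)| = -v(\state)$ for every $\state$. In particular $\delta(\state) - v(\state) \leq -v(\state)$, that is, $\delta(\state) \leq 0$. This inclusion does not even need Proposition~\ref{prop:feas_set_as_overestimates}.

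For the inclusion ``$\supseteq$'', take $\delta \in \advset{}{\cost}$ with $\delta \leq 0$. Proposition~\ref{prop:feas_set_as_overestimates} gives $v \leq \delta \leq 0$ pointwise. Then $v(\state) - \delta(\state) \leq 0$, so
\begin{align*}
    |v(\state) - \delta(\state)| = \delta(\state) - v(\state) \leq -v(\state) = |v(\state)|,
\end{align*}
where the inequality uses $\delta(\state) \leq 0$. Hence $\delta \in \mathcal{B}_*(\cost)$, and therefore $\delta \in \feasset{\cost}$.

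The only non-trivial ingredient is the lower bound $\delta \geq v$, and it is already supplied by Proposition~\ref{prop:feas_set_as_overestimates}. What remains is bookkeeping of signs. We must check that $|v| = -v$ holds because $v \leq 0$, and that both $v \leq \delta$ and $\delta \leq 0$ are really needed to open the absolute value in the second inclusion. We expect no real obstacle beyond this careful sign handling.
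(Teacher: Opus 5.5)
Your proof is correct and follows essentially the same route as the paper. The paper shows $\mathcal{C}_*(\cost) \cap \mathcal{B}_*(\cost) = \mathcal{C}_*(\cost) \cap \orthant$ via $\mathcal{B}_*(\cost) \subseteq \orthant$ (from $\vvarg{\policy^*\!, \cost} \leq 0$) and the same sign check $\vvarg{\policy^*\!, \cost} \leq \delta \leq 0 \Rightarrow \delta \in \mathcal{B}_*(\cost)$, then intersects with $\advset{}{\cost} \subseteq \mathcal{C}_*(\cost)$ from Proposition~\ref{prop:feas_set_as_overestimates}; you simply run the two inclusions directly on $\advset{}{\cost}$ rather than on $\mathcal{C}_*(\cost)$.
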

\begin{proof}
    If $\mathcal{C}_*(\cost)$ denotes the set of overestimates of the optimal value function as in \eqref{eq:overestimates_set}, and $\orthant$ denotes the nonpositive orthant of $\maps(\X, \R)$, then we have
    \begin{align*}
        \mathcal{C}_*(\cost) \cap \mathcal{B}_*(\cost) = \mathcal{C}_*(\cost) \cap \orthant,
    \end{align*}
    since $\mathcal{B}_*(\cost) \subseteq \orthant$ trivially implies $\mathcal{C}_*(\cost) \cap \mathcal{B}_*(\cost) \subseteq \mathcal{C}_*(\cost) \cap \orthant$, while:
    \begin{align*}
        \delta \in \mathcal{C}_*(\cost) \cap \orthant &\Longleftrightarrow 0 \geq \delta(\state) \geq \vvarg{\policy^*\!, \cost}(\state),~ \forall \state \in \X\\
        &\Longleftrightarrow -\vvarg{\policy^*\!, \cost}(\state) \geq \delta(\state) - \vvarg{\policy^*\!, \cost}(\state) \geq 0,~ \forall \state \in \X\\
        &\Longrightarrow \delta \in \mathcal{B}_*(\cost),
    \end{align*}
    or, equivalently, $\mathcal{C}_*(\cost) \cap \orthant \subseteq \mathcal{C}_*(\cost) \cap \mathcal{B}_*(\cost)$. This implies, by Proposition~\ref{prop:feas_set_as_overestimates}, that
    \begin{align*}
        \advset{}{\cost} \cap \mathcal{B}_*(\cost) = \advset{}{\cost} \cap \orthant.
    \end{align*}
\end{proof}
\begin{center}
    \begin{figure}[t]
        \includegraphics[width = 0.8\textwidth]{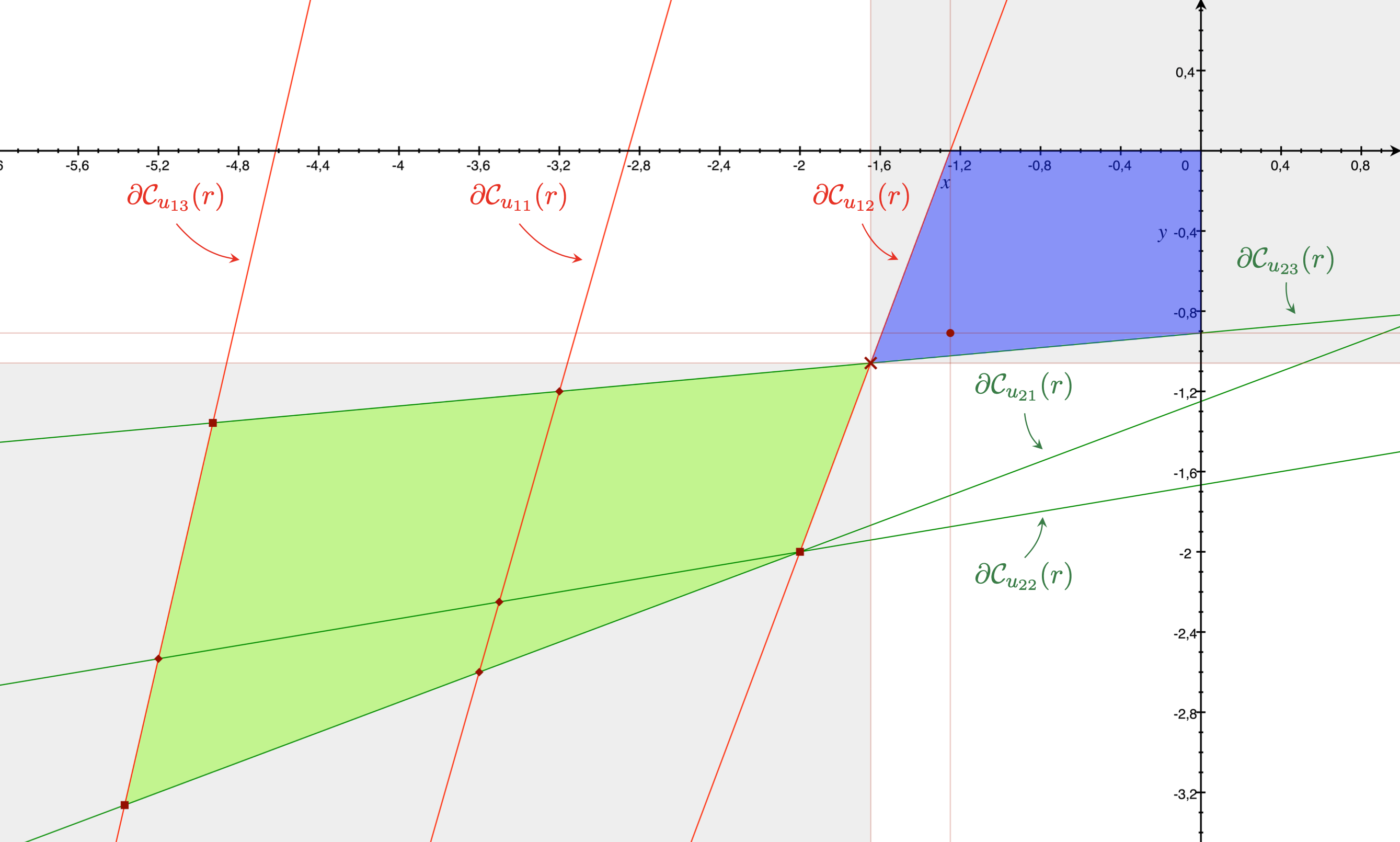}
        \caption{Visualization of the value space $\maps(\X, \R)$ for an MDP with two states $x_1, x_2$ and an action space $\{u_{ij} : i \in \{1, 2\}, j \in \{1, 2, 3\}\}$, where $u_{ij} \in \U_{x_i}$. Each intersection point $\cred{\partial \mathcal{C}_{u}(\cost)} \cap \cgreen{\partial \mathcal{C}_{v}(\cost)}$, with $u \in \U_{x_1}$ and $v \in \U_{x_2}$, is the value function corresponding to the policy $(x_1, x_2) \mapsto (u, v)$. The optimal value function is marked with a red cross, whereas the admissible set is the blue polygon. The green polygon is the convex hull of the set of deterministic value functions, which includes value functions corresponding to stochastic policies as well, and the light-grey cones represent the set of underestimates (bottom-left) and overestimates (top-right) of the optimal value function.}
      \label{fig:feasible_set}
    \end{figure}
\end{center} \section{Control-Theoretic Reformulation}
\label{sec:section_2}
In this section, we provide a control-theoretic reformulation of the normalization problem. Specifically, in Subsection~\ref{subsec:section_2A} we formulate this control problem, while in Subsection~\ref{subsec:section_2B} we characterize control laws achieving asymptotic normalization, and provide some examples thereof. Finally, in Subsection~\ref{subsec:section_2C}, we reformulate the normalization problem as an optimal control problem and provide sufficient conditions on its feasibility and on the existence of solutions in the infinite-horizon case.
\subsection{Normalization as an Output Stabilization Problem}
\label{subsec:section_2A}
In the previous section, we introduced the reward-dependent set of transformations to be used in the design of a normalizing algorithm for a given MDP. Now, we reformulate this task within a system-theoretic framework, with the idea of restating it as a control problem. To this end, consider the vector representation \eqref{eq:reward_transform_representation} of a general advantage-preserving reward transformation and let us rewrite it as an iterative law
\begin{align}\label{eq:reward_transform_system}
    \Cost_{t+1} = \Cost_t + \B \Delta_t
\end{align}
where
\begin{align}\label{eq:input_matrix}
    \B \coloneqq \disc \F - \Proj,
\end{align}
and $t \in \N$. In doing so, we are considering the reward vector $\Cost_t$ as the state of a linear time-invariant system and $\Delta_t$ as a control input. Hence, our problem boils down to searching for a feedback control law $\Cost \mapsto \Delta_{\sml \text{FB}}(\Cost)$ able to steer the reward $\Cost$, initialized in the nonpositive orthant, towards the normal set. The normalization problem can be interpreted as an output stabilization problem, with output map given by
    \begin{align}
    \Cost \mapsto \underbrace{(I_n - \disc \Ffb{\policy^*})^{-1} \Pi^* \Cost}_{= \Varg{\policy^*\!, \cost}} \label{eq:ideal_output_map}
\end{align}
where $\policy^*$ is any optimal policy for the MDP. However, this output map is impractical, since computing it explicitly is essentially equivalent to solving the very problem the normalization process is meant to address. Therefore, we leverage Theorem~\ref{prop:nec_suff_condition_for_normality} and replace \eqref{eq:ideal_output_map} with a measurable nonlinear output map $\outmap : \R^m \to \R^n$ with components defined by
\begin{align}\label{eq:output_regulation_implicit}
    \outmap^i(\Cost) = \max_{j \in \U_i} \Cost^j, \quad i \in \{1, \hdots, n\}.
\end{align}
Consequently, we consider the dynamical system
\begin{subequations}\label{eq:system_dynamics}
    \begin{align}
        \Cost_{t+1} &= \Cost_t + \B \Delta_t,\\
        y_t &= \outmap(\Cost_t),
    \end{align}
\end{subequations}
and seek a sequence of admissible control inputs $\{\Delta_t \in \feasset{\Cost_t}\}_{t \in \N}$. We will refer to such a sequence as a \emph{normalizing control law}. 

\subsection{Normalizing Control Laws}
\label{subsec:section_2B}
Before providing examples of normalizing control laws, we first present a characterization thereof.
\begin{proposition}[\textbf{Characterization of normalizing controls}]\label{cor:necessary_sufficien_condition_for_normalizing_control}
    A control law $t \mapsto \Delta_t$ for the dynamics \eqref{eq:reward_transform_system} is normalizing if and only if it satisfies
    \begin{align*}
        \sum_{t = 0}^\infty \Delta_t = \Varg{\policy^*\!, \cost_0}.
    \end{align*}
\end{proposition}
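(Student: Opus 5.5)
The plan is to make the closed-loop trajectory explicit, show that admissibility forces the partial sums of the inputs to converge, and then use the uniqueness of the normal representative in each $\actAdvsym$-orbit (Theorem~\ref{lem:uniqueness_of_normal_form}) to identify the limit. I read ``normalizing'' as: the inputs are admissible, $\Delta_t \in \feasset{\Cost_t}$, and the output $y_t = \outmap(\Cost_t)$ converges to $0$, i.e.\ $\Cost_t$ approaches $\normset$. I also read $\Varg{\policy^*\!,\cost_0}$ as the optimal value function of the initial nonpositive reward.

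\emph{Step 1: closed form and monotonicity.} Iterating \eqref{eq:reward_transform_system} gives
\begin{align*}
\Cost_t = \Cost_0 + \B D_t, \qquad D_t \coloneqq \sum_{k<t}\Delta_k .
\end{align*}
Hence $\Cost_t = \mathcal{L}^{D_t}\Cost_0$ lies in the $\actAdvsym$-orbit of $\Cost_0$. By Proposition~\ref{lem:advantage_preservation},
\begin{align*}
\Varg{\policy^*\!,\cost_t} = \Varg{\policy^*\!,\cost_0} - D_t .
\end{align*}
Optimal policies coincide along the orbit because advantages are preserved. Admissibility, through Propositions~\ref{prop:feas_set_as_overestimates} and \ref{prop:alternative_definition_feasible_set}, gives $\Varg{\policy^*\!,\cost_t} \le \Delta_t \le 0$. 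Therefore $D_t$ is componentwise nonincreasing and bounded below by $\Varg{\policy^*\!,\cost_0}$. So $D_t \to D_\infty$ for some $D_\infty$ with $\Varg{\policy^*\!,\cost_0} \le D_\infty \le 0$. Consequently $\Cost_t \to \Cost_\infty \coloneqq \Cost_0 + \B D_\infty$. This limit is nonpositive because each $\Cost_t$ is and the orthant is closed, and it lies in the same orbit as $\Cost_0$.

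\emph{Step 2: reduce normalization to a condition on the limit.} The output map $\outmap$ is a componentwise maximum of coordinates, hence continuous. Therefore $y_t \to \outmap(\Cost_\infty)$, and the law is normalizing iff $\outmap(\Cost_\infty)=0$. By Theorem~\ref{prop:nec_suff_condition_for_normality} this holds iff $\Cost_\infty \in \normset$.

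\emph{Step 3: identify the limit.} By Theorem~\ref{lem:uniqueness_of_normal_form} the orbit of $\Cost_0$ meets $\normset$ only at $\mathcal{L}^{\Varg{\policy^*\!,\cost_0}}\Cost_0$ (Corollary~\ref{cor:ideal_transformation}). Hence $\Cost_\infty\in\normset$ iff $\B D_\infty = \B\,\Varg{\policy^*\!,\cost_0}$. Since $\B$ is injective, this holds iff $D_\infty = \Varg{\policy^*\!,\cost_0}$, i.e.\ $\sum_t \Delta_t = \Varg{\policy^*\!,\cost_0}$.

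For the converse direction one can argue equivalently via values. If the series sums to $\Varg{\policy^*\!,\cost_0}$, then $\Cost_\infty$ has zero optimal value and is therefore normal. The only delicate point is Step 1: for the ``only if'' direction the series must be shown to converge at all. This is exactly where admissibility is used, through the monotone-bounded argument. Without admissibility the partial sums need not converge, and the equivalence would have to be stated for the limit of the partial sums.
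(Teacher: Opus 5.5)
Your proof is correct, but it takes a different route from the paper's.

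\textbf{The paper's route.} The paper uses the bundle identification $\Cost = \bar{\Cost} - \B\,\Varg{\policy^*\!,\cost}$ from Theorem~\ref{th:reward_space_as_principal_bundle} to write the trajectory in error coordinates,
\[
\Cost_{t+1} - \bar{\Cost} = -\B\Bigl(\Varg{\policy^*\!,\cost_0} - \sum_{\tau=0}^{t}\Delta_\tau\Bigr).
\]
It then concludes directly from the injectivity of $\B$ (in finite dimensions, $\B$ is bounded below): $\Cost_t \to \bar{\Cost}$ if and only if the partial sums converge to $\Varg{\policy^*\!,\cost_0}$. That argument is purely linear and never uses admissibility. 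However, it tacitly reads ``normalizing'' as convergence of the state to the normal representative $\bar{\Cost}$.

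\textbf{Your route.} You read normalization through the output, $y_t = \outmap(\Cost_t) \to 0$, which forces you to bridge output and state. You get convergence of the partial sums from admissibility: they are monotone and bounded, via Propositions~\ref{prop:feas_set_as_overestimates} and~\ref{prop:alternative_definition_feasible_set}, once nonpositivity of $\Cost_t$ is propagated by induction. You then use continuity of $\outmap$, uniqueness of the normal point in the orbit, and injectivity of $\B$. So your version makes explicit the output-to-state step that the paper skips, at the price of needing admissibility.

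\textbf{Your closing remark.} It overstates that dependence. Since $\outmap(\Cost_0 + \B D) = \mathcal{T}(D) - D$, where $\mathcal{T}$ is the optimal Bellman operator of the initial MDP and $\mathcal{T}(\Varg{\policy^*\!,\cost_0}) = \Varg{\policy^*\!,\cost_0}$, the $\disc$-contraction property gives
\[
(1-\disc)\infnorm{D_t - \Varg{\policy^*\!,\cost_0}} \le \infnorm{y_t} \le (1+\disc)\infnorm{D_t - \Varg{\policy^*\!,\cost_0}}.
\]
Hence output convergence and convergence of the partial sums to $\Varg{\policy^*\!,\cost_0}$ are equivalent for arbitrary input sequences. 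This yields an admissibility-free version of your argument, and it also justifies the identification the paper makes implicitly.
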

\begin{proof}
    By Theorem~\ref{th:reward_space_as_principal_bundle}, every reward vector $\Cost$ can be uniquely identified with a tuple $(\bar{\Cost}, \Varg{\policy^*\!, \cost})$, where $\bar{\Cost}$ is its normal representative and $\Varg{\policy^*\!, \cost}$ is its optimal value function. In particular, it can be expressed as
    \begin{align*}
        R = \bar{R} - B \Varg{\policy^*\!, \cost}.
    \end{align*}
    We can exploit this fact to easily put the dynamics in error coordinates with respect to the unique normal representative. In particular, we have:
    \begin{align*}
        \Cost_{t+1} &= \Cost_t + B \Delta_t\\
        &= \Cost_0 + B \sum_{\tau = 0}^t \Delta_\tau\\
        &= \bar{\Cost} - B \Varg{\policy^*\!, \cost_0} + B\sum_{\tau = 0}^t \Delta_\tau,
    \end{align*}
    and thus, by introducing $\tilde{\Cost} \coloneqq \Cost - \bar{\Cost}$, we finally obtain:
    \begin{align}\label{eq:dynamics_error_coordinates}
        \tilde{\Cost}_{t + 1} = - B \left(\Varg{\policy^*\!, \cost_0} - \sum_{\tau = 0}^t \Delta_\tau\right).
    \end{align}
    The statement then follows from the injectivity of $B$.
\end{proof}
\noindent
In other words, in order to normalize an MDP with reward function $\cost$, a control law $\{\Delta_t\}_{t \in \N}$ must constitute a partition of the corresponding optimal value function $\vvarg{\policy^*\!, \cost}$. We now present three examples of normalizing control laws. Two of them were originally introduced in~\cite{mustafin2025mdpgeometrynormalizationreward} as normalization algorithms: here, we recast them as control laws within our system-theoretic framework and establish additional structural properties.
\newline
\noindent\textbf{\emph{I) Ideal Normalizing Control.}}
\newline
The first law we present is impractical, yet it serves as the prototype for all normalizing control laws.
\begin{definition}\label{def:ideal_output_feedback}
    We define the ideal output-feedback control law by setting
    \begin{align}
        \Delta_t = \Varg{\policy^*\!, \cost_t}.
    \end{align}
\end{definition}
\noindent This control law is characterized by the following result, which is an immediate consequence of Corollary~\ref{cor:ideal_transformation} and Proposition~\ref{cor:necessary_sufficien_condition_for_normalizing_control}.
\begin{proposition}
    The ideal output-feedback control law is admissible and normalizing. In particular, it solves the problem in one step.
\end{proposition}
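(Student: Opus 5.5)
The plan is to verify the three claims in turn: admissibility of $\Delta_0$, one-step normalization, and the normalizing property of the whole sequence. Each follows from results already established.

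\emph{Admissibility at $t=0$.} Since $\cost_0$ is nonpositive, Proposition~\ref{prop:alternative_definition_feasible_set} reduces admissibility of $\Delta_0 = \Varg{\policy^*\!, \cost_0}$ to two conditions:
\begin{itemize}
\item[] $\Varg{\policy^*\!, \cost_0} \leq 0$, which holds because nonpositive rewards give nonpositive value functions by \eqref{eq:value_funct_def};
\item[] $\actAdv{\vvarg{\policy^*\!, \cost_0}}(\cost_0) \leq 0$, which is exactly Corollary~\ref{cor:ideal_transformation}.
\end{itemize}
Alternatively, one can note that the proof of Proposition~\ref{prop:properties_feasible_set} already observed that the optimal value function lies in $\feasset{\cost_0}$.

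\emph{One-step normalization.} By Corollary~\ref{cor:ideal_transformation}, $\cost_1 = \actAdv{\vvarg{\policy^*\!, \cost_0}}(\cost_0)$ is in normal form. Equivalently, by Proposition~\ref{lem:advantage_preservation}, its optimal value function is $\vvarg{\policy^*\!, \cost_0} - \vvarg{\policy^*\!, \cost_0} = 0$. Here one must check that $\policy^*$ is still optimal for $\cost_1$. This holds because the value functions of all policies are shifted by the same $\delta$, so the pointwise preorder on policies is unchanged.

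\emph{Subsequent steps.} For $t \geq 1$, $\cost_t$ is normal, so $\Varg{\policy^*\!, \cost_t} = 0$ and $\Delta_t = 0$. The zero input is admissible by Proposition~\ref{prop:properties_feasible_set}, and it leaves $\cost_t = \cost_1$ fixed. An induction on $t$ therefore gives admissibility at every step, with the system remaining at the normal reward.

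\emph{Normalizing property.} The inputs sum to
\[
\sum_{t} \Delta_t = \Varg{\policy^*\!, \cost_0} + 0 + 0 + \cdots = \Varg{\policy^*\!, \cost_0},
\]
so Proposition~\ref{cor:necessary_sufficien_condition_for_normalizing_control} shows the law is normalizing. The same conclusion can be read off directly from \eqref{eq:dynamics_error_coordinates}, since $\tilde{\Cost}_t = 0$ for all $t \geq 1$.

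The only point requiring care is the invariance of the optimal policy under the $\actAdvsym$-action, which the paper handles through Proposition~\ref{lem:advantage_preservation}. Every other step is a direct citation.
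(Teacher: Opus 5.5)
Your proposal is correct and follows the paper's own route. The paper states this result as an immediate consequence of Corollary~\ref{cor:ideal_transformation} (admissibility and one-step normalization) and Proposition~\ref{cor:necessary_sufficien_condition_for_normalizing_control} (the inputs sum to $\Varg{\policy^*\!, \cost_0}$), which is what you verify, with the zero-input steps for $t \geq 1$ spelled out explicitly.
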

\noindent It follows that the corresponding input sequence is given by:
\begin{align}\label{eq:ideal_input_sequence}
    \Delta_t = 
    \begin{cases}
        (I_n - \disc \Ffb{\policy^*})^{-1} \Pi^* \Cost_0, & t = 0\\
        0, & t > 0
    \end{cases}
\end{align}
\noindent\textbf{\emph{II) Full-Output Feedback.}}
\newline
The second law we consider is the one used in~\cite[Algorithm 2]{mustafin2025mdpgeometrynormalizationreward}, where it was originally applied to an approximate-model setting.
\begin{definition}\label{def:MFRBS}
    The full-output feedback reward-balancing method is the control law obtained by setting
    \begin{align}
        \Delta_t = y_t.
    \end{align}
\end{definition}
\noindent We then provide the following result, which not only establishes its normalizing properties but also offers a clear interpretation linking this method to standard dynamic programming.
\begin{proposition}
    The full-output feedback reward-balancing control law is admissible, normalizing, and equivalent to value iteration\footnote{For details about value iteration see, e.g., \cite[Chapter 6.3]{puterman2014markov}.} with initial estimate $\hat{V}_0 = \outmap(\Cost_0)$.
\end{proposition}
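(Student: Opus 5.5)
The plan is to prove the three claims (admissibility, normalization, equivalence with value iteration) in reverse order, since the value-iteration interpretation gives the other two almost for free.

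\textbf{Step 1: reduce the dynamics to a recursion on cumulative inputs.} Set $\hat V_t \coloneqq \sum_{\tau=0}^{t-1}\Delta_\tau$, so $\hat V_0 = 0$. By \eqref{eq:reward_transform_system},
\begin{align*}
\Cost_t = \Cost_0 + \B \hat V_t = \Cost_0 + \disc \F \hat V_t - \Proj \hat V_t .
\end{align*}
Taking the fiberwise maximum over $\U_i$ commutes with subtracting $\Proj \hat V_t$, which is constant on each fiber. This gives
\begin{align*}
y_t = \outmap(\Cost_t) = \outmap(\Cost_0 + \disc \F \hat V_t) - \hat V_t = \bellman(\hat V_t) - \hat V_t,
\end{align*}
where $\bellman$ is the Bellman optimality operator $\bellman(V)^i = \max_{j\in\U_i}\bigl(\Cost_0^j + \disc (\F V)^j\bigr)$. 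Hence $\hat V_{t+1} = \hat V_t + y_t = \bellman(\hat V_t)$, and the cumulative input $\hat V_t$ is exactly the value-iteration sequence started from $0$.

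Shifting the index by one, $\hat V_1 = \bellman(0) = \outmap(\Cost_0)$. So the sequence $\hat V_{t+1}$ is value iteration with initial estimate $\outmap(\Cost_0)$, as claimed.

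\textbf{Step 2: normalization.} Since $\bellman$ is a $\disc$-contraction in $\infnorm{\cdot}$, we get $\hat V_t \to \Varg{\policy^*\!,\cost_0}$. This is precisely $\sum_t \Delta_t = \Varg{\policy^*\!,\cost_0}$, so Proposition~\ref{cor:necessary_sufficien_condition_for_normalizing_control} gives that the law is normalizing.

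\textbf{Step 3: admissibility, via Proposition~\ref{prop:alternative_definition_feasible_set}.} We must show $\Delta_t = y_t \le 0$ and $\actAdv{y_t}(\Cost_t)\le 0$.

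For the first condition, argue by induction that $\Cost_t \le 0$. Then $y_t = \outmap(\Cost_t) \le 0$ follows immediately.

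For the second condition, write $\Cost_{t+1} = \Cost_t + \disc\F y_t - \Proj y_t$. Componentwise, $\Cost_t^j - y_t^{s(j)} \le 0$ because $y_t^{s(j)}$ is the maximum of $\Cost_t$ over the fiber containing $j$. Also $\disc(\F y_t)^j \le 0$, since $\F$ is row-stochastic and $y_t\le 0$. Hence $\Cost_{t+1}\le 0$, which closes the induction; the base case is the standing assumption $\Cost_0\le 0$.

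This nonpositivity induction is the delicate point. Everything else is bookkeeping, and the identification of $\outmap(\Cost_0+\B\hat V)$ with $\bellman(\hat V)-\hat V$ is the key observation. The fact that $\Proj\hat V$ is constant on fibers (the block structure of $\Proj$) is what makes that identification work.
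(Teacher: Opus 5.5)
Your proof is correct and follows essentially the same route as the paper. Both arguments establish nonpositivity of $\Cost_{t+1}$ from the fiberwise-maximum bound and the row-stochasticity of $\F$. Both rewrite the output in terms of cumulative inputs to get $\hat V_{t+1} = \bellman(\hat V_t)$, and then combine the $\disc$-contraction of $\bellman$ with Proposition~\ref{cor:necessary_sufficien_condition_for_normalizing_control}. The differences are cosmetic: you index the partial sums from $\hat V_0 = 0$ and shift by one (the paper sets $\hat V_t = \sum_{\tau=0}^{t}\Delta_\tau$ directly), and you write the key identity in matrix form via the block structure of $\Proj$ rather than componentwise.
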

\begin{proof}
    The control law is feasible by Proposition~\ref{prop:alternative_definition_feasible_set}, since if $\Cost_t \leq 0$ then clearly, for every $i \in \{1, \hdots, n\}$, we have $\Delta_t^i = \max_{l \in \U_i} \Cost^l_t \leq 0$, and, for every $j \in \U_i$
    \begin{align}
        \Cost^j_{t + 1} &= \Cost_t^j - \Delta^i_t + \disc \sum_{k = 1}^n \F_k^j \Delta_t^k\\
        &= \Cost_t^j - \max_{l \in \U_i} \Cost^l_t + \disc \sum_{k = 1}^n \F_k^j \max_{l \in \U_k} \Cost_t^l\\
        &\leq 0.
    \end{align}
    Furthermore:
    \begin{align*}
        \smash{\underbrace{\max_{j \in \U_i} \Cost^j_{t + 1}}_{= \Delta_{t + 1}^i}} &= \max_{j \in \U_i} \left\{\Cost^j_t - \Delta^i_t + \disc \sum_{k = 1}^n \F_k^j \Delta_t^k\right\} \\
        &= \max_{j \in \U_i} \left\{\Cost_0^j - \sum_{\tau = 0}^t \Delta_\tau^i + \disc \sum_{k = 1}^n \F_k^j \sum_{\tau = 0}^t \Delta_\tau^k\right\}\\
        &= \max_{j \in \U_i} \left\{\Cost_0^j + \disc \sum_{k = 1}^n \F_k^j \sum_{\tau = 0}^t \Delta_\tau^k\right\} - \sum_{\tau = 0}^t \Delta_\tau^i,
    \end{align*}
    which yields
    \begin{align}\label{eq:value_iteration}
        \sum_{\tau = 0}^{t+1} \Delta_\tau^i &= \max_{j \in \U_i} \left\{\Cost_0^j + \disc \sum_{k = 1}^n \F_k^j \sum_{\tau = 0}^t \Delta_\tau^k\right\}.
    \end{align}
    By setting $\hat{V}_t^i \coloneqq \sum_{\tau = 0}^{t} \Delta_\tau^i$, the update \eqref{eq:value_iteration} can be compactly written as
    \begin{align}
        \hat{V}_{t+1} = \bellman\bigl(\hat{V}_t\bigr),
    \end{align}
    where $\bellman : \R^n \to \R^n$ is the well known optimal Bellman operator, which is a $\disc$-contraction with respect to $\infty$-norm (see, e.g., \cite[Proposition 6.2.4]{puterman2014markov}). Consequently, \eqref{eq:value_iteration} is precisely the value iteration update on $\hat{V}_t$, which is a fixed-point iteration of the $\disc$-contraction $\bellman$. It follows that
    \begin{align}\label{eq:geometric_bound_on_output_feedback}
        \infnorm{\Varg{\policy^*\!, \cost_0} - \sum_{\tau = 0}^t \Delta_\tau} &\leq \disc^t \infnorm{\Varg{\policy^*\!, \cost_0} - \Delta_0}\notag\\
        &= \disc^t \max_{i} \left| \Varg{\policy^*\!, \cost_0}^i - \max_{j \in \U_i} \Cost_0^j\right|,
    \end{align}
    and thus, by Proposition~\ref{cor:necessary_sufficien_condition_for_normalizing_control}, the control law is normalizing.
\end{proof}
\noindent\textbf{\emph{III) Safe Reward-Balancing}.}
\newline
The third law we present is the one used in \cite[Algorithm 1]{mustafin2025mdpgeometrynormalizationreward}, referred to as \emph{safe reward-balancing} algorithm, a name we also retain here.
\begin{definition}
    The safe reward-balancing (RB-S) method is the nonlinear state-feedback control law obtained by setting
    \begin{align}\label{eq:safe_reward_balancing}
        \Delta^i_t = \max_{j \in \U_i} \frac{\Cost^j}{1 - \disc F_i^j}, \quad i \in \{1, \hdots, n\}.
    \end{align}
\end{definition}
\noindent The properties of this control law were already established in the analysis in the aforementioned paper. We nevertheless provide them here for completeness.
\begin{proposition}
    The safe reward-balancing control law is admissible and normalizing.
\end{proposition}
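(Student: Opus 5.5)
The plan is to handle admissibility through the linear-inequality characterization in Proposition~\ref{prop:alternative_definition_feasible_set}. Normality will then follow from a monotone-convergence argument on the partial sums $\hat{V}_t \coloneqq \sum_{\tau=0}^{t}\Delta_\tau$, combined with Proposition~\ref{cor:necessary_sufficien_condition_for_normalizing_control}. Throughout, for an action index $j \in \U_i$ I write $p_j \coloneqq \F_i^j \in [0,1]$ for its self-transition probability, so that $1-\disc p_j \geq 1-\disc > 0$.

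\emph{Admissibility.} Assume $\Cost_t \leq 0$. Then every ratio $\Cost_t^j/(1-\disc p_j)$ is nonpositive, so $\Delta_t \leq 0$. For $j \in \U_i$, expanding the dynamics \eqref{eq:reward_transform_system} gives
\begin{align*}
\Cost_{t+1}^j = \Cost_t^j - (1-\disc p_j)\Delta_t^i + \disc\sum_{k\neq i}\F_k^j\Delta_t^k .
\end{align*}
By definition of the maximum in \eqref{eq:safe_reward_balancing}, $\Delta_t^i \geq \Cost_t^j/(1-\disc p_j)$. Hence the first two terms sum to a nonpositive number. The last term is nonpositive because $\Delta_t \leq 0$ and $\F \geq 0$. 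Therefore $\Cost_{t+1} \leq 0$, and Proposition~\ref{prop:alternative_definition_feasible_set} gives $\Delta_t \in \feasset{\Cost_t}$. By induction, nonpositivity of the reward holds along the whole trajectory.

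\emph{Normality.} Since $\Cost_{t+1} = \Cost_0 + \B\hat{V}_t \leq 0$, we have $\actAdv{\hat{V}_t}(\Cost_0)\leq 0$, that is, $\hat{V}_t \in \advset{}{\Cost_0}$. Proposition~\ref{prop:feas_set_as_overestimates} then yields $\hat{V}_t \geq \Varg{\policy^*\!,\cost_0}$ for all $t$. Moreover, $\hat{V}_t$ is componentwise nonincreasing because $\Delta_t \leq 0$. A monotone bounded sequence converges, so $\hat{V}_t \to \hat{V}_\infty$. Consequently $\Delta_t \to 0$, and $\Cost_t \to \Cost_\infty \coloneqq \Cost_0 + \B\hat{V}_\infty \leq 0$.

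The feedback map $\Cost \mapsto \Delta(\Cost)$ in \eqref{eq:safe_reward_balancing} is continuous, being a componentwise maximum of linear functions. Passing to the limit therefore gives $\max_{j\in\U_i}\Cost_\infty^j/(1-\disc p_j) = 0$ for every $i$. Since all denominators are positive and $\Cost_\infty \leq 0$, this is equivalent to $\max_{j\in\U_i}\Cost_\infty^j = 0$. Thus $\Cost_\infty \in \normset$ by Theorem~\ref{prop:nec_suff_condition_for_normality}.

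The limit $\Cost_\infty$ lies in the $\actAdvsym$-orbit of $\Cost_0$. By Theorem~\ref{lem:uniqueness_of_normal_form} it therefore equals the unique normal representative $\bar{\Cost} = \Cost_0 + \B\Varg{\policy^*\!,\cost_0}$. Injectivity of $\B$ then gives $\hat{V}_\infty = \Varg{\policy^*\!,\cost_0}$, so the law is normalizing by Proposition~\ref{cor:necessary_sufficien_condition_for_normalizing_control}.

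The delicate step is the identification of the limit. It requires that a vanishing input forces normality, and this uses both the continuity of the feedback and the positivity of the denominators $1-\disc p_j$. Once this is established, uniqueness of the normal form closes the argument without any contraction-rate estimate. A quantitative rate comparable to \eqref{eq:geometric_bound_on_output_feedback} could be added afterwards by comparing $\hat{V}_{t+1}$ with the Bellman update, but it is not needed for the statement.
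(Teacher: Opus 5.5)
Your proof is correct, but it reaches normalization by a different route than the paper.

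The paper's proof is essentially by citation. Nonpositivity of $\Cost_{t+1}$ is imported from Proposition C.1 of \cite{mustafin2025mdpgeometrynormalizationreward}. Normalization then follows by feeding the quantitative estimate \eqref{eq:rbs_reconstructs_v_star}, taken from the proof of Theorem 4.4 there, into Proposition~\ref{cor:necessary_sufficien_condition_for_normalizing_control}.

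You instead check admissibility by a direct computation: the self-transition term is controlled by the maximum in \eqref{eq:safe_reward_balancing}, and the cross terms are nonpositive. You then replace the rate estimate with a qualitative, LaSalle-type argument:
\begin{enumerate}[label=(\roman*)]
\item the partial sums $\hat{V}_t$ are nonincreasing and bounded below by $\Varg{\policy^*\!,\cost_0}$, via Proposition~\ref{prop:feas_set_as_overestimates};
\item the limit reward is a zero of a continuous feedback, hence normal;
\item uniqueness of the normal form in each orbit (Theorem~\ref{lem:uniqueness_of_normal_form}) together with injectivity of $\B$ identifies the limit.
\end{enumerate}

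Your version is self-contained within the paper's own results. It also proves a more general statement: any continuous admissible feedback that vanishes on the nonpositive orthant only at points of $\normset$ is normalizing. The full-output feedback $\outmap$ is another instance.

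What your route gives up is the explicit geometric rate $\disc^t/(1-\disc)$ that the cited bound supplies. The statement does not require that rate, but it is what the paper's route delivers beyond mere convergence.
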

\begin{proof}
    The control law is admissible by Proposition~\ref{prop:alternative_definition_feasible_set} since clearly if $\Cost_t \leq 0$, then $\Delta_t \leq 0$ and, by \cite[Proposition C.1]{mustafin2025mdpgeometrynormalizationreward}, $\Cost_{t+1} \leq 0$. Moreover, as shown in the proof of \cite[Theorem 4.4]{mustafin2025mdpgeometrynormalizationreward}, the RB-S control law satisfies
    \begin{align}\label{eq:rbs_reconstructs_v_star}
        \infnorm{\Varg{\policy^*\!, \cost_0} - \sum_{\tau = 0}^{t-1} \Delta_\tau} \leq \max_i \left\{ - y_0^i \right\}\frac{\disc^t}{1 - \disc},
    \end{align}
    which in turn implies by Proposition~\ref{cor:necessary_sufficien_condition_for_normalizing_control} that it is normalizing.
\end{proof}
\noindent Figure \ref{fig:feasible_set} provides a geometric intuition of this input choice (the red dot within the admissible set) and it illustrates that, for each nonpositive reward, it corresponds to the best overestimate of the optimal value function whose projections onto the elements of the canonical basis of $\R^n$ remain feasible. This geometric intuition is formalized by the following proposition.
\begin{proposition}\label{prop:rbs_components_property}
    For every nonpositive $\Cost_t$, the RB-S control law satisfies
    \begin{align}\label{eq:constraint_rbs}
        \Delta^i_t = \min \{\alpha \in \R : \Cost_t + \alpha B \hat{e}_i \leq 0\}, \quad \forall i \in \{1, \hdots, n\}.
    \end{align}
\end{proposition}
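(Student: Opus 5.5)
We compute the column $\B \hat{e}_i$ explicitly and reduce the constraint in \eqref{eq:constraint_rbs} to one scalar inequality per action. Since $\B = \disc \F - \Proj$, the $j$-th component of $\B\hat{e}_i$ is
\begin{align*}
    (\B \hat{e}_i)^j = \disc \F^j_i - \Proj^j_i =
    \begin{cases}
        \disc \F^j_i - 1, & j \in \U_i,\\
        \disc \F^j_i, & j \notin \U_i.
    \end{cases}
\end{align*}
The condition $\Cost_t + \alpha \B\hat{e}_i \leq 0$ therefore splits into two families of inequalities in $\alpha$.

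\emph{Fiber actions.} For $j \in \U_i$ we need $\Cost^j_t - \alpha(1 - \disc \F^j_i) \leq 0$. Since $\F^j_i \leq 1$ and $\disc < 1$, we have $1 - \disc\F^j_i > 0$, so this is equivalent to $\alpha \geq \Cost^j_t/(1 - \disc \F^j_i)$.

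\emph{Actions outside the fiber.} For $j \notin \U_i$ we need $\Cost^j_t + \alpha \disc \F^j_i \leq 0$. Because $\disc\F^j_i \geq 0$, this holds for every $\alpha \leq 0$ whenever $\Cost_t \leq 0$. If $\F^j_i > 0$ it reads $\alpha \leq -\Cost^j_t/(\disc \F^j_i)$, an upper bound that is nonnegative. If $\F^j_i = 0$, or if $\disc = 0$, it imposes no restriction at all.

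Combining the two families, the feasible set of $\alpha$ is a closed interval, possibly unbounded above. Its lower endpoint is
\begin{align*}
    \alpha_i^{\min} \coloneqq \max_{j \in \U_i} \frac{\Cost^j_t}{1 - \disc \F^j_i},
\end{align*}
and every upper bound it has is nonnegative.

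It remains to check that $\alpha_i^{\min}$ itself satisfies the upper bounds. Each term $\Cost^j_t/(1-\disc\F^j_i)$ is nonpositive because $\Cost_t \leq 0$, so $\alpha_i^{\min} \leq 0$. The upper bounds from the second family are all nonnegative, hence $\alpha_i^{\min}$ lies in the interval. The set is therefore nonempty and closed below, its minimum is attained at $\alpha_i^{\min}$, and $\alpha_i^{\min}$ coincides with $\Delta^i_t$ in \eqref{eq:safe_reward_balancing}.

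The only delicate step is this compatibility check. It relies on nonpositivity of $\Cost_t$ in two places: it makes $\alpha^{\min}_i \leq 0$, and it makes the off-fiber upper bounds nonnegative. This is exactly why the statement is restricted to nonpositive rewards. The other point to watch is that the coefficient $\disc\F^j_i - 1$ is strictly negative, so dividing by it flips the inequality direction consistently. This follows from $\disc < 1$ together with row-stochasticity of $\F$.
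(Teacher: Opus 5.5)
Your proof is correct and follows essentially the same route as the paper: you split the constraint $\Cost_t + \alpha \B\hat{e}_i \leq 0$ into in-fiber lower bounds $\alpha \geq \Cost^j_t/(1-\disc\F^j_i)$ and off-fiber upper bounds, then observe that the largest lower bound is nonpositive and therefore satisfies every off-fiber constraint. Your extra remarks on the strict positivity of $1-\disc\F^j_i$ and on the degenerate cases $\F^j_i = 0$ or $\disc = 0$ make explicit details the paper leaves implicit.
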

\begin{proof}
    Consider the $i$-th state and the $j$-th constraint of \eqref{eq:constraint_rbs}. Then:
    \begin{enumerate}[label = $\rhd$]
        \item if $j \in \U_i$ we have
        \begin{align}\label{eq:lower_bounds_alpha}
            \Cost^j + \alpha (\disc \F_i^j - 1) \leq 0 ~\implies~ \alpha \geq \frac{\Cost^j}{1 - \disc \F_i^j}.
        \end{align}
        \item if $j \notin \U_i$ we have
        \begin{align}\label{eq:upper_bounds_alpha}
            \Cost^j + \alpha \disc \F_i^j \leq 0 ~\implies~ \alpha \disc \F_i^j \leq \underbrace{- \Cost^j}_{\geq 0}.
        \end{align}
    \end{enumerate}
    Now, every lower bound in \eqref{eq:lower_bounds_alpha} is nonpositive and therefore also satisfies every inequality \eqref{eq:upper_bounds_alpha}. Hence, the minimum feasible value for $\alpha$ is the maximum among these lower bounds, which is exactly the $i$-th entry of the RB-S control law.
\end{proof}
\begin{figure}[!t]
    \centering
    \includegraphics[width = 0.8\textwidth]{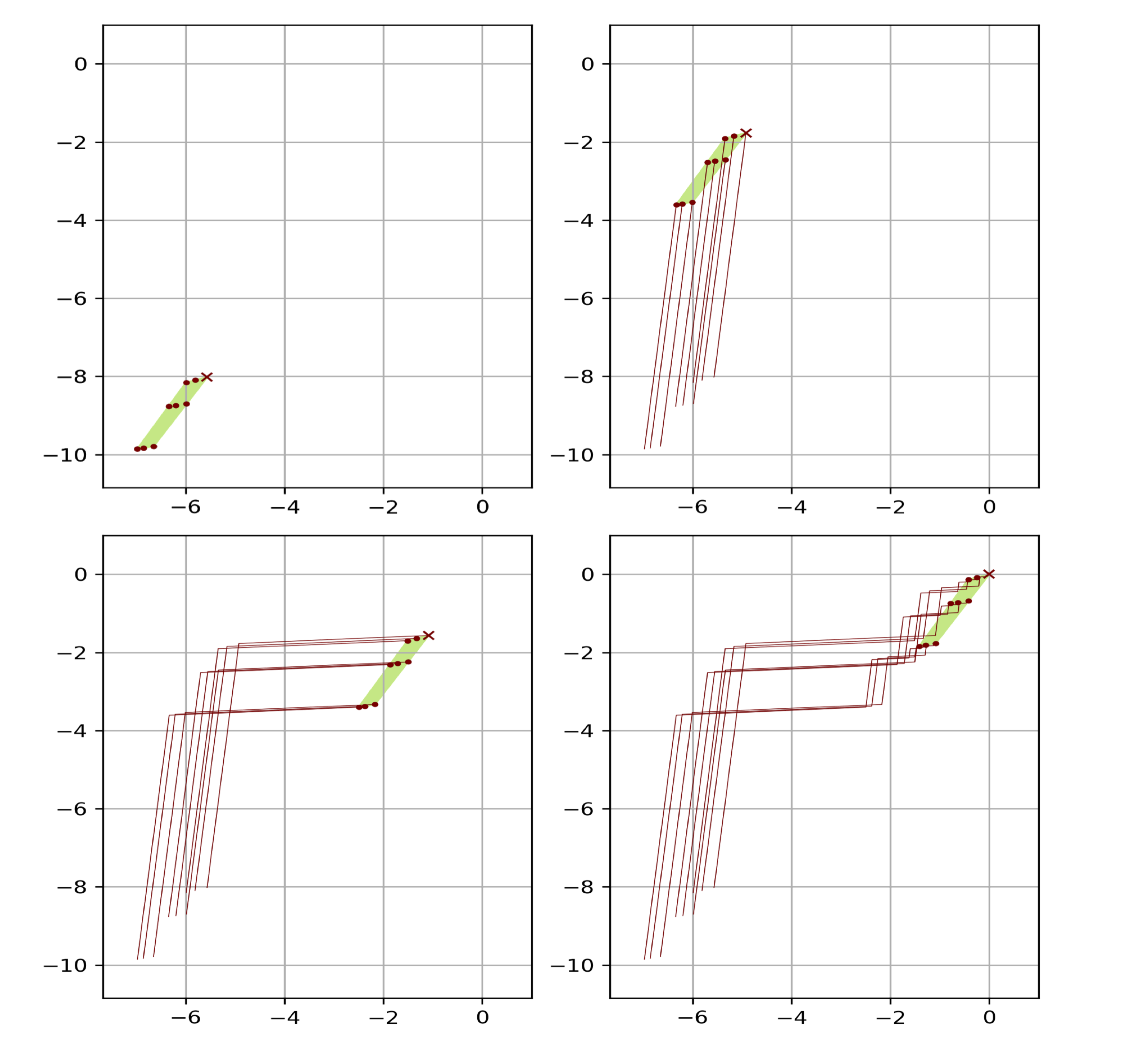}
    \caption{Example of the evolution of the set of value functions (green region, including stochastic policies, with the optimal one marked with a cross), for an MDP with two states and six actions in total (three available in each state) when the reward is updated using the RB-S control law computed using the exact model. Each suboptimal value function corresponding to a stationary deterministic policy is marked with a red dot, whereas the optimal one is marked with a cross. Depicted are the original set (top-left) and the set after one (top-right), two (bottom-left) and thirty iterations (bottom-right).}
    \label{fig:value_functions_evolution_exact_model}
\end{figure} %
\subsection{Normalization as an Optimal Control Problem}
\label{subsec:section_2C}
In this subsection, we recast the normalization problem as an optimal control problem, which can be used to design normalizing control laws in either a finite-horizon setting (where normalization can be achieved approximately, or exactly via model predictive control) or in an infinite-horizon setting. This formulation also allows for incorporating a broader variety of constraints and possibly additional penalty terms in the cost. 
Let us consider the following Optimal Control Problem (OCP)
\begin{subequations}\label{eq:ideal_optimal_control_problem}
    \begin{align}
        &\min_{\sml \{(\Cost_t, \Delta_t)\}} -\sum_{t = 1}^T \sum_{i = 1}^n \max_{j \in \U_i} \Cost_t^j \\\notag
        &\quad\text{subj.to:}\\
        &\qquad \Cost_0 = \bar{\Cost} \leq 0\\
        &\qquad \Cost_{t+1} = \Cost_t + B \Delta_t,\\
        &\qquad P \Cost_t + Q \Delta_t \leq 0,\label{eq:general_feas_constraint}\\
        &\quad\text{for all } t \in \{0, \hdots, T-1\}\notag
    \end{align}
\end{subequations}
where $T \in \N_+ \cup \{\infty\}$ and $P \in \R^{d\times m}, Q \in \R^{d\times n}$ satisfy
\begin{align}\label{eq:feasibility_of_general_constraint}
    \emptyset \neq \mathcal{F}(\Cost) \coloneqq \{\Delta : P \Cost + Q \Delta \leq 0\} \subseteq \feasset{\Cost},
\end{align}
for all $\Cost \leq 0$. That is, we allow for restrictions to a subset of the admissible set. In particular, if we choose:
\begin{align}\label{eq:ideal_constraint_choice}
    P = 
    \begin{bmatrix}
        0\\
        I_m
    \end{bmatrix}, 
    \quad 
    Q = 
    \begin{bmatrix}
        I_n\\
        B
    \end{bmatrix},
\end{align}
with $d = m + n$, we retrieve the admissible set by Proposition~\ref{prop:alternative_definition_feasible_set}. In this case, the optimal solution is trivial as shown by the following result.
\begin{proposition}\label{prop:ideal_feedback_control_optimality}
    If $P$ and $Q$ are chosen as in \eqref{eq:ideal_constraint_choice}, then the input sequence \eqref{eq:ideal_input_sequence} is the unique optimal solution of Problem \eqref{eq:ideal_optimal_control_problem}, for every $T \geq 1$. 
\end{proposition}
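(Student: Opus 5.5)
The plan is to show that the objective of Problem~\eqref{eq:ideal_optimal_control_problem} is nonnegative on every feasible trajectory, that the ideal input sequence \eqref{eq:ideal_input_sequence} attains the value zero, and that any input sequence attaining zero must coincide with \eqref{eq:ideal_input_sequence}.

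\emph{Lower bound.} With $P,Q$ as in \eqref{eq:ideal_constraint_choice}, constraint \eqref{eq:general_feas_constraint} reads $\Delta_t \leq 0$ and $\Cost_t + B\Delta_t \leq 0$, that is, $\Cost_{t+1} \leq 0$ for every $t \in \{0,\hdots,T-1\}$. Hence every state $\Cost_t$ with $1 \leq t \leq T$ of a feasible trajectory is nonpositive, so each term $-\max_{j\in\U_i}\Cost_t^j$ is nonnegative. The cost is therefore a sum, or for $T=\infty$ a series, of nonnegative terms, and is bounded below by $0$.

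\emph{Attainment.} For \eqref{eq:ideal_input_sequence}, $\Delta_0 = \Varg{\policy^*\!,\cost_0} \leq 0$, since the initial reward is nonpositive and so is every value function. By Corollary~\ref{cor:ideal_transformation}, $\Cost_1 = \Cost_0 + B\Varg{\policy^*\!,\cost_0}$ is nonpositive and normal. Feasibility at $t=0$ then follows from Proposition~\ref{prop:alternative_definition_feasible_set}, or directly from the two inequalities above. For $t \geq 1$ the input $\Delta_t = 0$ is trivially feasible and gives $\Cost_t = \Cost_1 \in \normset$. By Theorem~\ref{prop:nec_suff_condition_for_normality}, every term $\max_{j\in\U_i}\Cost_t^j$ with $t \geq 1$ vanishes, so the cost is $0$ and the sequence is optimal.

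\emph{Uniqueness.} This is the step requiring care. Take any optimal feasible sequence. Its cost is $0$, so by the lower bound each nonnegative term vanishes, i.e.\ $\max_{j\in\U_i}\Cost_t^j = 0$ for all $i$ and all $1\le t\le T$. Theorem~\ref{prop:nec_suff_condition_for_normality} then gives $\Cost_t \in \normset$ for every such $t$.
\begin{enumerate}[label=$\rhd$]
    \item Each $\Cost_t$ lies in the $\actAdvsym$-orbit of $\Cost_0$, because the dynamics only apply advantage-preserving transformations.
    \item By Theorem~\ref{lem:uniqueness_of_normal_form}, that orbit contains a unique normal element $\bar{\Cost}$, so $\Cost_t = \bar{\Cost} = \Cost_0 + B\Varg{\policy^*\!,\cost_0}$ for every $1 \le t \le T$.
    \item At $t=0$ this gives $B\Delta_0 = \Cost_1 - \Cost_0 = B\Varg{\policy^*\!,\cost_0}$. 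Injectivity of $B$ (equivalently, faithfulness of the action) yields $\Delta_0 = \Varg{\policy^*\!,\cost_0}$.
    \item For $1 \le t \le T-1$ we get $B\Delta_t = \Cost_{t+1}-\Cost_t = 0$, hence $\Delta_t = 0$.
\end{enumerate}
Thus the optimal input sequence coincides with \eqref{eq:ideal_input_sequence}. The same argument covers $T=1$, where only $\Delta_0$ appears, and $T=\infty$, where a convergent series of nonnegative terms with zero sum forces every term to vanish.
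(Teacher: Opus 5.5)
Your proof is correct and follows the same route as the paper's. You get nonnegativity of the cost from the constraint $\Cost_{t+1}\le 0$, show the one-step ideal input attains zero cost, and obtain uniqueness because every $\Cost_t$ with $t\ge 1$ must be the unique normal point of the orbit, so injectivity of $B$ pins down the inputs. Your version is slightly more complete, since you explicitly derive $\Delta_t = 0$ for $t \geq 1$, whereas the paper only argues that no $\Delta_0$ other than $\Varg{\policy^*\!,\cost_0}$ normalizes in one step.
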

\begin{proof}
    The input sequence \eqref{eq:ideal_input_sequence} normalizes the MDP in a single step, and the corresponding \emph{stage cost}
    \begin{align}\label{eq:stage_cost}
        c_t \coloneqq -\sum_{i = 1}^n \max_{j \in \U_i} \Cost_t^j
    \end{align}
    is thus zero for all $t \geq 1$. Since the reward is constrained to remain nonpositive, the cost is lower-bounded by zero and this input sequence is therefore optimal. Any other optimal solution would also need to normalize the MDP in a single step as well (by Theorem~\ref{prop:nec_suff_condition_for_normality}). However, Proposition~\ref{cor:necessary_sufficien_condition_for_normalizing_control} shows that there is no $\Delta_0$ other than $\Varg{\policy^*\!, \cost_0}$ capable of achieving this.
\end{proof}
\noindent Although known in closed form, the previous solution is impractical, as already discussed. Moreover, in the presence of model uncertainties, more conservative solutions become particularly valuable. For instance, one could make the solution more robust by restricting the feasible set or by introducing additional penalty terms in the cost. Furthermore, if the MDP exhibits symmetries, it may be desirable to enforce the input to satisfy Proposition~\ref{prop:g_invariance}, which can be expressed as a finite set of constraints of the type $\pm (A_g^{\mathrm{x}} - I_n) \Delta \leq 0$, compatible with the structure of \eqref{eq:general_feas_constraint}. For general choices of $P$ and $Q$, the problem admits no solution in closed form, and exact normalization of the MDP can be achieved only asymptotically in the infinite-horizon case. In this setting, we provide two sufficient conditions on $P$ and $Q$ that guarantee feasibility of the problem and existence of optimal solutions.
\begin{theorem}[\textbf{Feasibility of the infinite-horizon OCP}]\label{th:sufficient_conditions_feasibility}
    Problem \eqref{eq:ideal_optimal_control_problem} with $T = \infty$ is feasible if, for every $\Cost \leq 0$, at least one of the following conditions holds:
    \begin{enumerate}[label= (\alph*), ref=\thetheorem-(\alph*)]
        \item\label{prop:sufficient_condition_feasibility_1} There exists $\alpha \in [0, 1)$ such that:
        \begin{align}
            \min_{\scalebox{0.6}{$\Delta : P \Cost + Q \Delta \leq 0$}} \infnorm{\Varg{\policy^*\!, \cost} - \Delta } \leq \alpha \infnorm{\Varg{\policy^*\!, \cost}}.
        \end{align}
        \item\label{prop:sufficient_condition_feasibility_2} The output map \eqref{eq:output_regulation_implicit} satisfies
        \begin{align}\label{eq:feasibility_of_output_feedback}
            P \Cost + Q \outmap(\Cost) \leq 0.
        \end{align}
    \end{enumerate}
    Moreover, under either condition, an optimal solution exists and has finite cost.
\end{theorem}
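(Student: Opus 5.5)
Under either condition, the plan is to build an explicit feasible trajectory with finite cost and then get existence of a minimizer from a compactness argument. Feasibility means there is an infinite sequence $\{\Delta_t\}$ with $\Delta_t \in \mathcal{F}(\Cost_t)$ for all $t$. Since $\mathcal{F}(\Cost) \subseteq \feasset{\Cost}$, every such sequence keeps $\Cost_t \leq 0$, so $\mathcal{F}(\Cost_t) \neq \emptyset$ at every step by \eqref{eq:feasibility_of_general_constraint}. Greedy recursion therefore always gives feasibility; the real content is producing a feasible sequence whose cost $\sum_t c_t$, with $c_t$ as in \eqref{eq:stage_cost}, is finite.

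\textbf{Case (b).} Choose $\Delta_t = \outmap(\Cost_t) = y_t$, which is admissible for the constraint by \eqref{eq:feasibility_of_output_feedback}. This is exactly the full-output feedback law. By the value-iteration bound \eqref{eq:geometric_bound_on_output_feedback},
\[
\infnorm{\Varg{\policy^*\!,\cost_0} - \textstyle\sum_{\tau=0}^{t}\Delta_\tau} \leq \disc^{t}\, C_0 .
\]
Since $\Delta_{t+1} = y_{t+1}$, we have $c_{t+1} = -\mathds{1}_n\T \Delta_{t+1}$, and $\Delta_{t+1}$ is the difference of two consecutive value-iteration errors. Hence $|c_{t+1}| \leq 2n\disc^{t}C_0$, which is summable.

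\textbf{Case (a).} At each step pick a minimizer $\Delta_t$ of the left-hand side for $\Cost_t$. The minimum exists because $\mathcal{F}(\Cost_t)$ is a nonempty closed subset of the compact set $\feasset{\Cost_t}$. By Proposition~\ref{lem:advantage_preservation}, $\Varg{\policy^*\!,\cost_{t+1}} = \Varg{\policy^*\!,\cost_t} - \Delta_t$, so
\[
\infnorm{\Varg{\policy^*\!,\cost_{t}}} \leq \alpha^t \infnorm{\Varg{\policy^*\!,\cost_0}} .
\]
It then remains to bound the stage cost by the optimal value. For $\Cost_t \leq 0$, Proposition~\ref{prop:feas_set_as_overestimates} applied with $\delta = 0$, or directly \eqref{eq:value_reward_relation}, gives
\[
\max_{j\in\U_i}\Cost_t^j \geq (1-\disc)\min_k \Varg{\policy^*\!,\cost_t}^k .
\]
To see this, note that the Bellman equation under $\policy^*$ gives $\Varg{\policy^*\!,\cost_t}^i \leq \max_{j\in\U_i}\Cost_t^j + \disc \max_k \Varg{\policy^*\!,\cost_t}^k$. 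Applying this at the state minimizing $V$ yields the inequality; a sup-norm form $|\max_j \Cost_t^j| \leq (1+\disc)\infnorm{\Varg{\policy^*\!,\cost_t}}$ also suffices. Thus $0 \leq c_t \leq n(1+\disc)\alpha^t\infnorm{\Varg{\policy^*\!,\cost_0}}$, which is summable.

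\textbf{Existence of an optimal solution.} This is the step I expect to be the main obstacle. I would write $\Cost_t = \Cost_0 + B\sum_{\tau<t}\Delta_\tau$ and use $\Varg{\policy^*\!,\cost_t} = \Varg{\policy^*\!,\cost_0} - \sum_{\tau<t}\Delta_\tau$. Every partial sum then lies in the compact box $\Varg{\policy^*\!,\cost_0} \leq \sum_{\tau<t}\Delta_\tau \leq 0$, because each $\Delta_\tau \leq 0$ is an overestimate of the current value. Consequently each $\Delta_t$ lies in a fixed compact set, and the feasible set of sequences is compact in the product topology by Tychonoff, the constraints being closed coordinatewise. The cost is a sum of nonnegative continuous terms, hence lower semicontinuous in the product topology as a supremum of finite partial sums. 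An infimizing sequence therefore has a convergent subsequence whose limit is feasible and attains the infimum, which is finite by the cases above.
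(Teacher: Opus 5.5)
Your proposal follows essentially the same route as the paper. It builds an explicit geometrically decaying feasible trajectory (the argmin law under (a), the full-output feedback, i.e.\ value iteration, under (b)), then uses Tychonoff compactness of the feasible sequences and lower semicontinuity of the cost. Your uniform box $\Varg{\policy^*\!,\cost_0}\leq\Delta_t\leq 0$ plays the role of the paper's inclusion $\mathcal{B}_*(\Cost_k)\subseteq\mathcal{B}_*(\bar{\Cost})$ and lets you skip the induction on reachable sets.

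One claim in case (a) is false and should be dropped: $\max_{j\in\U_i}\Cost_t^j\geq(1-\disc)\min_k\Varg{\policy^*\!,\cost_t}^k$ does not hold in general. Take two states with a single action each: $x_1$ has reward $-1$ and moves to $x_2$, and $x_2$ has reward $0$ and a self-loop. Then $\Varg{\policy^*\!,\cost}=(-1,0)$, while $\max_{j\in\U_1}\Cost^j=-1<-(1-\disc)$ for every $\disc\in(0,1)$. Neither justification you cite yields it. Proposition~\ref{prop:feas_set_as_overestimates} with $\delta=0$ only gives $\Varg{\policy^*\!,\cost}\leq 0$. Evaluating your Bellman inequality at the minimizing state constrains only that state, and only with the weaker bound $\min_k\Varg{\policy^*\!,\cost_t}^k-\disc\max_k\Varg{\policy^*\!,\cost_t}^k$.

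The sup-norm alternative you also state is correct, and it is what your constant actually uses. At every state $i$, the same Bellman inequality gives $\max_{j\in\U_i}\Cost_t^j\geq\Varg{\policy^*\!,\cost_t}^i-\disc\max_k\Varg{\policy^*\!,\cost_t}^k\geq-(1+\disc)\infnorm{\Varg{\policy^*\!,\cost_t}}$. Hence $0\leq c_t\leq n(1+\disc)\alpha^t\infnorm{\Varg{\policy^*\!,\cost_0}}$, and the rest of your argument goes through unchanged.
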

\begin{proof}
    (a) If we choose a feasible control law satisfying
    \begin{align*}
        \Delta_t \in \argmin_{\scalebox{0.6}{$\Delta : P \Cost_t + Q \Delta \leq 0$}} \infnorm{\Varg{\policy^*\!, \cost_t} - \Delta},
    \end{align*}
    we have
    \begin{align*}
        \infnorm{\Varg{\policy^*\!, \cost_0} - \sum_{\tau = 0}^t \Delta_\tau} &= \infnorm{\Varg{\policy^*\!, \cost_t} - \Delta_t}\\
        &\leq \alpha \infnorm{\Varg{\policy^*\!, \cost_t}}\\
        &= \alpha \infnorm{\Varg{\policy^*\!, \cost_{t-1}} - \Delta_{t-1}}\\
        &\leq \alpha^{t + 1} \infnorm{\Varg{\policy^*\!, \cost_0}}
    \end{align*}
    where we iteratively used the relation
    \begin{align*}
        \Varg{\policy^*\!, \cost_t} = \Varg{\policy^*\!, \cost_{t-1}} - \Delta_{t-1}.
    \end{align*}
    It follows that the stage cost \eqref{eq:stage_cost} is geometrically bounded, since
    \begin{align*}
        -\max_{j \in \U_i} \Cost_{t+1}^j &= \Delta_t^i - \max_{j \in \U_i}\left\{ \Cost_t^j + \sum_{k = 1}^n \disc \F_k^j \Delta_t^k \right\}\\
        &\stkrl{1}{=} \sum_{\tau = 0}^t \Delta_\tau^i - \max_{j \in \U_i}\left\{ \Cost_0^j + \sum_{k = 1}^n \disc \F_k^j \sum_{\tau = 0}^t \Delta_\tau^k \right\}\\
        &= \sum_{\tau = 0}^t \Delta_\tau^i - \left[\mathcal{T}\left(\sum_{\tau = 0}^t \Delta_\tau\right)\right]^i - \underbrace{\left[\Varg{\policy^*\!, \cost_0} - \mathcal{T}\left( \Varg{\policy^*\!, \cost_0} \right) \right]^i}_{= 0}\\
        &\leq \infnorm{\Varg{\policy^*\!, \cost_0} - \sum_{\tau = 0}^t \Delta_\tau} \hspace{-0.25cm}+ \infnorm{\mathcal{T}\left( \Varg{\policy^*\!, \cost_0} \right) - \mathcal{T}\left(\sum_{\tau = 0}^t \Delta_\tau\right)}\\
        &\leq (1 + \disc) \infnorm{\Varg{\policy^*\!, \cost_0} - \sum_{\tau = 0}^t \Delta_\tau}\\
        &\leq (1 + \disc) \alpha^{t + 1} \infnorm{\Varg{\policy^*\!, \cost_0}},
    \end{align*}
    where in (1) we used 
    \begin{align*}
        \Cost_t^j = \Cost_0^j - \sum_{\tau = 0}^{t-1} \left(\Delta_\tau^i - \sum_{k = 1}^n \disc \F_k^j \Delta_\tau^k\right),
    \end{align*}
    and thus the summation $\sum_{t = 1}^\infty c_t$ converges.\newline
    (b) If the output map $\outmap$, as introduced in \eqref{eq:output_regulation_implicit}, satisfies \eqref{eq:feasibility_of_output_feedback}, then the feedback law $\Delta_t = y_t$ is feasible for the optimal control problem. It follows from~\eqref{eq:geometric_bound_on_output_feedback} and an argument analogous to that used in the previous part, that the stage cost \eqref{eq:stage_cost} is geometrically bounded.
    \newline To prove existence of optimal solutions, we regard the cost functional $J$ as an extended\footnote{The term ``extended'' refers to the fact that some feasible trajectories have infinite cost.} real-valued map defined on the set of feasible sequences $\{(\Cost_t, \Delta_t)\}_{t \in \N}$, equipped with the product topology. We first show that this space is compact. For each $t \in \N$, the pair $(\Cost_t, \Delta_t)$ belongs to the set
    \begin{align*}
        \mathfrak{S}_t(\bar{\Cost}) \coloneqq \{(\Cost, \Delta) : \Cost \in \mathfrak{R}_t(\bar{\Cost}), \Delta \in \mathcal{F}(\Cost)\},
    \end{align*}
    where $\mathfrak{R}_t(\bar{\Cost})$ denotes the reachable set at time $t$ from the initial reward $\bar{\Cost}$. The set
    \begin{align*}
        \mathfrak{S}_0(\bar{\Cost}) = \{\bar{\Cost}\} \times \mathcal{F}(\bar{\Cost})
    \end{align*}
    is clearly compact. Assume that $\mathfrak{S}_t(\bar{\Cost})$ is compact for a generic time $t \in \N$. The reachable set at time $t + 1$ is given by
    \begin{align*}
        \mathfrak{R}_{t+1}(\bar{\Cost}) = \mathcal{L}(\mathfrak{S}_t(\bar{\Cost})),
    \end{align*}
    where $\mathcal{L} : (\Cost, \Delta) \mapsto R + B \Delta$, and is therefore compact by continuity of $\mathcal{L}$. Consider, without loss of generality, a sequence $(\Cost_k, \Delta_k) \in \mathfrak{S}_{t+1}(\bar{\Cost})$ such that $\Cost_k \to \hat{\Cost} \in \mathfrak{R}_{t+1}(\bar{\Cost})$ as $k \to \infty$. Since
    \begin{align*}
        \mathcal{F}(\Cost_k) \subseteq \feasset{\Cost_k} \subset \mathcal{B}_*(\Cost_k) \subseteq \mathcal{B}_*(\bar{\Cost})
    \end{align*}
    for all $k \in \N$, see \eqref{eq:feasible_ball}, there exists a subsequence $(\Cost_{k'}, \Delta_{k'})$ converging to some $\bigl(\hat{\Cost}, \hat{\Delta}\bigr) \in \mathfrak{R}_{t+1}(\bar{\Cost}) \times \mathcal{B}_*(\bar{\Cost})$. Since $\Delta_{k'} \in \mathcal{F}(\Cost_{k'})$ for every $k'$, we have:
    \begin{align*}
        \sum_{i = 1}^m P_i^l \Cost_{k'}^i + \sum_{j = 1}^n Q_j^l \Delta_{k'}^j \leq 0, \quad \forall l \in \{1, \hdots, d\}.
    \end{align*}
    Passing to the limit preserves the inequalities, and it follows by continuity that $\hat{\Delta} \in \mathcal{F}(\hat{\Cost})$. Therefore, $\mathfrak{S}_{t+1}(\bar{\Cost})$ is sequentially compact, and thus compact. By induction $\mathfrak{S}_t(\bar{\Cost})$ is compact for all $t \in \N$, and thus the space of feasible sequences, given by $\prod_{t = 0}^{\infty} \mathfrak{S}_t(\bar{\Cost})$, is compact in the product topology by Tychonoff theorem (see, e.g., \cite[Theorem 13]{kelley2017general}). Furthermore, the cost is lower-semicontinuous, since it is the supremum over $\N$ of its partial sums, which are continuous on this space. Now, under either assumption $(a)$ or $(b)$ there exists a feasible solution with finite cost $\bar{J}$. Hence, the sublevel set $J^{-1}((-\infty, \bar{J}]) = J^{-1}([0, \bar{J}])$ is nonempty. By lower-semicontinuity this set is closed and thus compact as a closed subset of a compact topological space. By the extended Weierstrass theorem for lower-semicontinuous functions, $J$ attains its minimum on this set.
\end{proof} \section{Approximate-Model Reward-Balancing Methods}
\label{sec:section_3}
In this section, we examine the consequences of performing a reward-balancing transformation when only an approximate model is available. Specifically, in Subsection~\ref{subsec:section_3A}, we characterize the effect of an advantage-preserving transformation on the entire class of reward-coupled MDPs. Furthermore, in Subsection~\ref{subsec:section_3B} we discuss how a sample-based normalization strategy can be used to increase the likelihood of achieving optimality in the absence of an exact model. We also propose a principled framework for the design of robust normalizing laws based on a scenario MPC approach, extending the optimal control formulation introduced in the previous section. Finally, in Subsection~\ref{subsec:section_3C} we provide numerical simulations comparing the scenario MPC and the full-output feedback first introduced in~\cite{mustafin2025mdpgeometrynormalizationreward} and recalled in the previous section.
\subsection{Reward Transformations with Approximate Model}
\label{subsec:section_3A}
Let $\mdparg{\cost}{\disc \f}$ be the MDP modeling the problem we aim to solve, and suppose that we have access to an approximation $\hat{\f}$ of the transition function, which could be obtained, for example, from experimental data. Consequently, our reference MDP is actually $\mdparg{\cost}{\disc \hat{\f}}$ and we can normalize its reward function using one of the methods discussed in the previous section. However, at this point, a question naturally arises: what happens to the original MDP? This question is difficult to answer a priori, as it depends heavily on the specific model $\hat{\f}$. Nevertheless, we can reasonably expect that the advantage functions, together with the relative positions of the value functions of $\mdparg{\cost}{\disc \f}$ will be altered during the normalization procedure. This is because we are shifting the reward function of two reward-coupled MDPs along the orbit of one, thus moving it away from the orbit of the other. Clearly if $\hat{\f}$ is a good approximation of~$\f$, we might still expect that, in the end, the greedy policy will be optimal for $\mdparg{\cost}{\disc \f}$. This occurs in the case where the orbit of the approximate MDP intersects the same sections of the normal set as $\mdparg{\cost}{\disc \f}$, as already discussed. This being said, the normalized reward will inevitably differ from the unique normalization of the original MDP. The next proposition addresses part of the problem, by capturing the effect of an advantage-preserving reward transformation on any reward-coupled MDP.
\begin{proposition}\label{prop:modified_transformation}
    Let $\mdparg{\cost}{\disc \hat{\f}}$ and $\mdparg{\cost}{\disc \f}$ be two reward-coupled MDPs, and suppose that the reward function is updated with an advantage-preserving transformation of the first MDP:
    \begin{align*}
        \Costpert = \Cost + \bigl( \disc \Fapx - \Proj \bigr) \Delta.
    \end{align*}
    Then, the value functions of the second MDP change according to the following relations:
    \begin{align}\label{eq:modified_transform}
        \Varg{\policy, \hat{\cost}} = \Varg{\policy, \cost} - \Delta + \disc \bigl(I_n - \disc \Ffb{\policy}\bigr)^{-1} \bigl(\Fapxfb{\policy} - \Ffb{\policy} \bigr) \Delta,
    \end{align}
    for all $\policy \in \Sect$.
\end{proposition}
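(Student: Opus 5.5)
The plan is to compute directly from the closed-form representation \eqref{eq:value_reward_relation} of the value function of the second MDP. For a fixed policy $\policy$ with matrix $\Pi$, we have $\Varg{\policy, \hat{\cost}} = (I_n - \disc \Ffb{\policy})^{-1} \Pi \Costpert$, where the inverse is taken with respect to the \emph{true} model $\F$. Substituting the update $\Costpert = \Cost + (\disc \Fapx - \Proj)\Delta$ and using linearity, this splits as
\begin{align*}
    \Varg{\policy, \hat{\cost}} = (I_n - \disc \Ffb{\policy})^{-1} \Pi \Cost + (I_n - \disc \Ffb{\policy})^{-1} \Pi(\disc \Fapx - \Proj)\Delta .
\end{align*}
The first term is exactly $\Varg{\policy, \cost}$.

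For the second term, use $\Pi \Proj = I_n$ and $\Pi \Fapx = \Fapxfb{\policy}$, so that $\Pi(\disc\Fapx - \Proj) = \disc\Fapxfb{\policy} - I_n$. The key manipulation is to add and subtract $\disc \Ffb{\policy}$:
\begin{align*}
    \disc\Fapxfb{\policy} - I_n = -(I_n - \disc \Ffb{\policy}) + \disc(\Fapxfb{\policy} - \Ffb{\policy}).
\end{align*}
Multiplying on the left by $(I_n - \disc \Ffb{\policy})^{-1}$, which exists since $\Ffb{\policy}$ is row-stochastic and $\disc<1$, turns the first piece into $-\Delta$. The second piece becomes $\disc (I_n - \disc \Ffb{\policy})^{-1}(\Fapxfb{\policy} - \Ffb{\policy})\Delta$. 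Combining the three contributions gives \eqref{eq:modified_transform}.

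There is no real obstacle. The only point needing care is to use the inverse built from the true model $\F$, not $\Fapx$, when writing the value function of the second MDP. As a consistency check, when $\Fapx = \F$ the formula reduces to Proposition~\ref{lem:advantage_preservation}. Since $\policy$ was arbitrary, the relation holds for all $\policy \in \Sect$.
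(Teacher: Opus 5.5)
Your proposal is correct and follows essentially the same route as the paper. Both arguments write the true-model value function as $(I_n - \gamma F_\pi)^{-1}$ applied to the pulled-back updated reward, reduce the update term to $(\gamma \hat{F}_\pi - I_n)\Delta$ using $\Pi S = I_n$, and then split off $-\Delta$; your add-and-subtract of $\gamma F_\pi$ is simply a more direct version of the paper's Neumann-series manipulation.
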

\begin{proof}
    Consider an arbitrary policy $\policy \in \Sect$. Both the value functions corresponding to $\hat{\cost}$ and $\cost$, respectively, satisfy the corresponding Bellman equation. In vector notation:
    \begin{align}
        0 &= \Costfb{\policy} - (I_n - \disc \Ffb{\policy}) \Varg{\policy, \cost}\label{eq:bellman_value}\\
        0 &= \underbrace{\Costfb{\policy} - (I_n - \disc \Fapxfb{\policy}) \Delta}_{\eqqcolon \Costpert_{\policy}} - (I_n - \disc \Ffb{\policy}) \Varg{\policy, \hat{\cost}}. \label{eq:bellman_value_approx_transf}
    \end{align}
    The comparison between~\eqref{eq:bellman_value} and~\eqref{eq:bellman_value_approx_transf} leads to
    \begin{align}
        \Varg{\policy, \hat{\cost}} = \Varg{\policy, \cost} + (I_n - \disc \Ffb{\policy})^{-1} \bigl(\disc \Fapxfb{\policy} - I_n\bigr) \Delta.
    \end{align}
    Expanding the second term on the right-hand side,
    \begin{align*}
        (I_n - \disc \Ffb{\policy})^{-1} \bigl(\disc \Fapxfb{\policy} - I_n\bigr) &= \left( \sum_{t = 0}^\infty \disc^t (\Ffb{\policy})^t\right) \bigl(\disc \Fapxfb{\policy} - I_n\bigr)\\
        &= \disc (I_n - \disc \Ffb{\policy})^{-1} \Fapxfb{\policy} - I_n - \disc \left(\sum_{t = 0}^\infty \disc^t (\Ffb{\policy})^t\right) \Ffb{\policy}\\
        &= - I_n + \disc (I_n - \disc \Ffb{\policy})^{-1} \bigl(\Fapxfb{\policy} - \Ffb{\policy} \bigr),
    \end{align*}
    from which the proposition follows.
\end{proof}
\begin{center}
    \begin{figure}[!t]
        \centering
        \includegraphics[width = 0.8\textwidth]{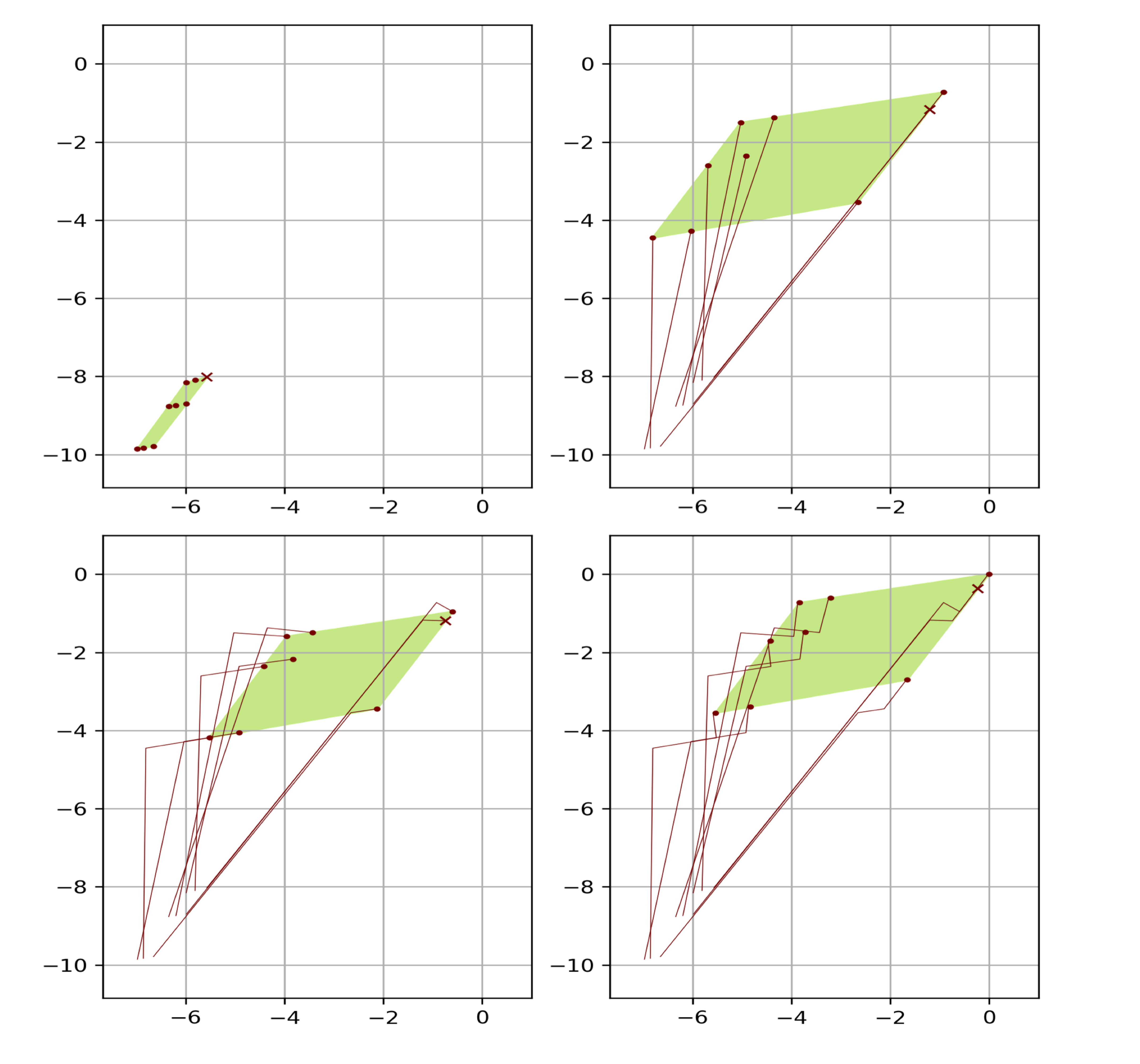}
        \caption{Example of the evolution of the value functions set (green region, including stochastic policies) for the same MDP as in Figure~\ref{fig:value_functions_evolution_exact_model} when the reward is updated using the RB-S control law computed using a wrong model. Each suboptimal value function corresponding to a stationary deterministic policy is marked with a red dot, whereas the optimal one is marked with a cross. Depicted are the original set (top-left) and the set after one (top-right), two (bottom-left) and thirty iterations (bottom-right). In this case a suboptimal value function gets annihilated, precisely the one which is optimal for the model used to perform the reward updates.}
        \label{fig:value_functions_evolution_wrong_model}
    \end{figure}
\end{center}
\noindent The following proposition shows that the additional term in~\eqref{eq:modified_transform} can be bounded by a function of the model mismatch and the input amplitude.
\begin{proposition}
    The deviation of the perturbed update~\eqref{eq:modified_transform} from the exact-model update admits the following upper bound:
    \begin{align}
        \left\|\Varg{\policy, \hat{\cost}} - (\Varg{\policy, \cost} - \Delta) \right\|_2 \leq \varepsilon \sqrt{n} \frac{\disc}{1 - \disc} \|\Delta\|_2,
    \end{align}
    where $n$ is the cardinality of the state space, $\disc$ is the discount factor and $\varepsilon$ is the operator 2-norm of the model error:
    \begin{align*}
        \varepsilon \coloneqq \bigl\|\Fapx - \F\bigr\|_2.
    \end{align*}
\end{proposition}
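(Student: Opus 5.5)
Start from Proposition~\ref{prop:modified_transform}, which gives the exact form of the deviation:
\begin{align*}
    \Varg{\policy, \hat{\cost}} - (\Varg{\policy, \cost} - \Delta) = \disc \bigl(I_n - \disc \Ffb{\policy}\bigr)^{-1} \bigl(\Fapxfb{\policy} - \Ffb{\policy}\bigr) \Delta .
\end{align*}
Apply submultiplicativity of the operator 2-norm, so that the left-hand side is bounded by $\disc \,\|(I_n - \disc \Ffb{\policy})^{-1}\|_2 \,\|\Fapxfb{\policy} - \Ffb{\policy}\|_2 \,\|\Delta\|_2$. It remains to bound the two matrix factors separately.

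For the model-error factor, recall that $\Ffb{\policy} = \Pi \F$ and $\Fapxfb{\policy} = \Pi \Fapx$. Hence $\Fapxfb{\policy} - \Ffb{\policy} = \Pi(\Fapx - \F)$. The matrix $\Pi$ selects $n$ distinct rows of an $m$-row matrix: its rows are distinct canonical basis vectors, so $\Pi \Pi\T = I_n$. Thus $\Pi$ is a partial isometry with $\|\Pi\|_2 = 1$, which gives $\|\Fapxfb{\policy} - \Ffb{\policy}\|_2 \leq \varepsilon$.

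For the resolvent factor, use the Neumann series $(I_n - \disc \Ffb{\policy})^{-1} = \sum_t \disc^t (\Ffb{\policy})^t$. A row-stochastic matrix has $\infty$-norm one, so $\|(I_n - \disc\Ffb{\policy})^{-1}\|_\infty \leq 1/(1-\disc)$. Converting to the 2-norm costs a dimension factor, via the standard inequality $\|M\|_2 \leq \sqrt{n}\,\|M\|_\infty$ for $n\times n$ matrices. This follows from $\|Mx\|_2 \leq \sqrt{n}\|Mx\|_\infty \leq \sqrt{n}\|M\|_\infty\|x\|_\infty \leq \sqrt{n}\|M\|_\infty \|x\|_2$. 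It yields $\|(I_n - \disc\Ffb{\policy})^{-1}\|_2 \leq \sqrt{n}/(1-\disc)$. Combining the three bounds gives exactly $\varepsilon\sqrt{n}\,\disc\,\|\Delta\|_2/(1-\disc)$.

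The only delicate step is the passage from the $\infty$-norm to the 2-norm for the resolvent. Powers of a row-stochastic matrix need not be 2-norm contractions, so the direct estimate $\|\Ffb{\policy}\|_2 \leq 1$ is unavailable in general. This is precisely where the $\sqrt{n}$ factor enters, and I would handle it through the norm-equivalence inequality above rather than attempting a sharper spectral argument.
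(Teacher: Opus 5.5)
Your proof is correct and follows the paper's argument step for step: submultiplicativity, $\|\Pi\|_2 = 1$, the bound $\|(I_n - \disc \Ffb{\policy})^{-1}\|_\infty \leq 1/(1-\disc)$, and the factor $\sqrt{n}$ from norm equivalence. The only difference is that you get the $\infty$-norm bound directly from the Neumann series, whereas the paper uses strict diagonal dominance and Varah's theorem. Your route is more elementary and also shows the bound is attained, since the resolvent is entrywise nonnegative with all row sums equal to $1/(1-\disc)$; so the strict inequality the paper writes at that step should be non-strict, which is harmless for the stated result.
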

\begin{proof}
    Let $\policy \in \Sect$ be any policy. From Proposition~\ref{prop:modified_transformation}, it immediately follows that
    \begin{align}\label{eq:partial_bound}
        \twonorm{\Varg{\policy, \hat{\cost}} - (\Varg{\policy, \cost} - \Delta)} &\leq \frac{\disc}{\sigma^{\min}_{\policy, \disc}} \twonorm{\Fapxfb{\policy} - \Ffb{\policy}} \cdot \twonorm{\Delta}\\
        &\leq \frac{\disc}{\sigma^{\min}_{\policy, \disc}} \twonorm{\Pi} \cdot \twonorm{\Fapx - \F } \cdot \twonorm{\Delta}\\
        &= \varepsilon \frac{\disc}{\sigma^{\min}_{\policy, \disc}} \twonorm{\Delta},
    \end{align}
    where $\sigma^{\min}_{\policy, \disc}$ denotes the minimum singular value of $A \coloneqq (I_n - \disc \Ffb{\policy})$. Now, the matrix $A$ is always diagonally dominant, regardless of the policy and the discount factor, since for all $i \in \{1, \hdots, n\}$ we have
    \begin{align*}
        \left|A_i^i\right| = 1 - \disc [\Ffb{\policy}]_i^i > \disc \left(1 - [\Ffb{\policy}]_i^i\right) \stkrl{a}{=} \sum_{j \neq i} \disc [\Ffb{\policy}]_j^i = \sum_{j \neq i} \left|A_j^i\right|,
    \end{align*}
    where (a) follows from the row-stochasticity of $\Ffb{\policy}$. Therefore, by \cite[Theorem 1]{VARAH19753}, it satisfies
    \begin{align*}
        \infnorm{A^{-1}} < \frac{1}{\alpha}
    \end{align*}
    with
    \begin{align*}
        \alpha &\coloneqq \min_i \left\{\left|A_i^i\right| - \sum_{j \neq i} \left|A_j^i\right|\right\}\\
        &= \min_i \left\{1 - \disc [\Ffb{\policy}]_i^i - \sum_{j \neq i} \disc [\Ffb{\policy}]_j^i\right\}\\
        &= \min_i \left\{1 - \sum_{j = 1}^n \disc [\Ffb{\policy}]_j^i\right\}\\
        &= \min_i \left\{1 - \disc \right\}\\
        &= 1 - \disc.
    \end{align*}
    Finally, norm equivalence implies
    \begin{align*}
        \frac{1}{\sigma^{\min}_{\policy, \disc}} &= \twonorm{A^{-1}} \leq \sqrt{n} \infnorm{A^{-1}} < \frac{\sqrt{n}}{1 - \disc}
    \end{align*}
    which, used in \eqref{eq:partial_bound}, concludes the proof.
\end{proof}

\subsection{Reward-Balancing and Model Sampling}
\label{subsec:section_3B}
The previous results characterize the structural relationship between a perturbed, advantage-preserving update and the value functions of the true model. We now turn to analyzing the long term behavior of the normalization process under model uncertainties. As already discussed, choosing a single approximate model completely determines the outcome of the normalization process, and no choice of input can change this outcome. Consequently, in situations where a certain level of model accuracy cannot be guaranteed, a different approach is required. Suppose we have access to random realizations $\Fapx_t$ of the true model $\F$. Then the reward can be updated as
\begin{subequations}\label{eq:random_reward_update}
    \begin{align}
        \Costpert_{t + 1} &= \Costpert_t + \Bapx_t \Delta_t\\
        &= \Costpert_t + \B \Delta_t + \bigl(\Bapx_t - \B\bigr)\Delta_t,
    \end{align}
\end{subequations}
where
\begin{align}\label{eq:B_sample_def}
    \Bapx_t \coloneqq \disc \Fapx_t - \Proj,
\end{align}
which falls into the standard framework of stochastic systems with multiplicative noise (see, e.g., \cite{gravell2020robust, coppens2020data}). In this setting, the outcome of a normalization process is not predetermined by a single fixed model. A first drawback of this approach is that, in general, the admissible set changes at every model realization. It is therefore convenient to identify a model-invariant subset common to \emph{every} admissible set. For example, the set $\{\Delta : \outmap(\Cost) \leq \Delta \leq 0\}$ is a subset of $\feasset{\Cost}$ for all $\f$ and is therefore independent of the particular model choice. In general, however, this set is not the largest subset of $\feasset{\Cost}$ with this invariance property, as established by the following theorem.
\begin{theorem}[\textbf{The largest model-invariant admissible set}]\label{th:largest_invariant_set}
    For any nonpositive reward function $\cost$ and discount factor $\disc \in [0, 1)$ the set
    \begin{align}\label{eq:feasset_def}
        \invset{\Cost} \coloneqq \bigcap_{i,k = 1}^n \bigl\{ \Delta \in \R^n : 0 \geq \Delta^i \geq \disc \Delta^k + \Cost^j, ~ j \in \U_i\bigr\} %
    \end{align}
    satisfies~\eqref{eq:feasibility_of_general_constraint} for any model $\f$. In particular, it is the maximal model-independent feasible set in the following sense:
    \begin{align}
        \invset{\Cost} = \bigcap_{\f} \feasset{\Cost}.
    \end{align}
    Furthermore, it satisfies condition (b) of Theorem~\ref{th:sufficient_conditions_feasibility}, i.e.:
    \begin{align}
        \outmap(\Cost) \in \invset{\Cost}, \quad \forall \Cost \leq 0.
    \end{align}
\end{theorem}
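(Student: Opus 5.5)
The plan is to work from the linear-inequality description of the admissible set in Proposition~\ref{prop:alternative_definition_feasible_set}. By that result, $\Delta\in\feasset{\Cost}$ if and only if $\Delta\le 0$ and, for every $i$ and every $j\in\U_i$,
\begin{align*}
\Cost^j - \Delta^i + \disc\sum_{k=1}^n \F^j_k \Delta^k \le 0 .
\end{align*}
The key observation is that, for fixed $j$, the row $(\F^j_k)_k$ ranges over the whole probability simplex as $\f$ varies over all models. Moreover, distinct rows can be chosen independently. Hence the intersection over all models of the constraints indexed by $j$ is
\begin{align*}
\Cost^j - \Delta^i + \disc \max_{p\in\text{simplex}} \sum_k p_k\Delta^k \le 0 .
\end{align*}
A linear function attains its maximum over the simplex at a vertex, so this reads $\Cost^j - \Delta^i + \disc\max_k \Delta^k\le 0$. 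Equivalently, $\Delta^i \ge \disc\Delta^k + \Cost^j$ for all $k$.

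First I will prove $\bigcap_\f \feasset{\Cost}\subseteq\invset{\Cost}$. Given $\Delta$ in the intersection, $\Delta\le0$ holds trivially. For any $i,k$ and $j\in\U_i$, pick a model whose row $j$ is the vertex $\hat e_k$, so that $\F^j_k=1$; this is an admissible transition function. Applying the admissibility condition for that model gives $\Delta^i\ge\disc\Delta^k+\Cost^j$. The reverse inclusion, which is exactly the statement that $\invset{\Cost}\subseteq\feasset{\Cost}$ for every $\f$, goes the other way. For $\Delta\in\invset{\Cost}$, take a convex combination with weights $\F^j_k$ of the inequalities $\Delta^i - \Cost^j\ge\disc\Delta^k$. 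This yields $\Cost^j-\Delta^i+\disc\sum_k\F^j_k\Delta^k\le0$, and together with $\Delta\le 0$ that is admissibility by Proposition~\ref{prop:alternative_definition_feasible_set}.

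To show that~\eqref{eq:feasibility_of_general_constraint} holds, I also need nonemptiness and a representation in the form $P\Cost+Q\Delta\le0$. The representation is immediate, since $\invset{\Cost}$ is defined by finitely many linear inequalities in $(\Cost,\Delta)$. Nonemptiness will follow from the last claim, $\outmap(\Cost)\in\invset{\Cost}$. To check this, set $\Delta=\outmap(\Cost)$. Then $\Delta\le0$ because $\Cost\le0$. For $j\in\U_i$, we have $\Delta^i=\max_{l\in\U_i}\Cost^l\ge\Cost^j$. Since $\disc\Delta^k\le 0$, it follows that $\Cost^j+\disc\Delta^k\le\Cost^j\le\Delta^i$. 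This also verifies condition (b) of Theorem~\ref{th:sufficient_conditions_feasibility}.

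I expect the only delicate step to be the extremal-model argument. One must make sure that a row of $\F$ may be set to an arbitrary vertex of the simplex independently of the other rows, and that the intersection is over all row-stochastic models with a fixed discount $\disc$. I would state this explicitly: the class of models is all row-stochastic $\F\in[0,1]^{m\times n}$, which contains every matrix with a prescribed row $\hat e_k\T$. Everything else reduces to the convexity computation above.
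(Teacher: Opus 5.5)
Your proposal is correct and follows essentially the same route as the paper. It proves $\invset{\Cost}\subseteq\feasset{\Cost}$ by taking the convex combination of the defining inequalities with weights $\F^j_k$, proves the reverse inclusion using the deterministic model with $\F^j_k=1$, and checks $\outmap(\Cost)\in\invset{\Cost}$ directly. Your maximality step is slightly cleaner, since applying each extremal model to a point of the intersection makes the paper's discussion of ``active'' (non-redundant) triples unnecessary, and you also make the nonemptiness part of~\eqref{eq:feasibility_of_general_constraint} explicit.
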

\begin{proof}
    Consider any $\Delta \in \invset{\Cost}$ and choose any model $\Fapx$. We need to show that $\Delta \in \feasset{\Cost}$, i.e., by~\eqref{eq:alternative_definition_feasible_set}, that $\mathcal{L}^\Delta(\Cost) \leq 0$ (the other condition, $\Delta \leq 0$, is satisfied by definition). Let $j \in \{1, \hdots, m\}$ be any action index and $i = \proj(j)$ be the corresponding state index. Then, we have:
    \begin{align*}
        \bigl[\mathcal{L}^\Delta(\Cost)\bigr]^j &= \Cost^j + (\disc\Fapx_i^j - 1)\Delta^i + \sum_{k \neq i} \disc\Fapx_k^j \Delta^k\\
        &= \Cost^j - \Delta^i + \disc \sum_{k = 1}^n \Fapx_k^j \Delta^k\\
        &\stkrl{a}{=} - \Delta^i + \sum_{k = 1}^n \Fapx_k^j \bigl(\disc \Delta^k + \Cost^j\bigr)\\
        &\stkrl{b}{\leq} -\Delta^i + \underbrace{\sum_{k = 1}^n \Fapx_k^j}_{= 1} \Delta^i\\
        &= 0,
    \end{align*}
    where $(a)$ follows from the row-stochasticity of $\Fapx$, whereas $(b)$ follows from the definition~\eqref{eq:feasset_def}. Since $j$ is arbitrary, this shows that $\mathcal{L}^\Delta(\Cost) \leq 0$, thus concluding the first part of the proof. In general, some of the linear inequalities defining the set $\invset{\Cost}$ could be redundant: we say that a triple $(i, j, k)$ is active if the corresponding inequality $\Delta^i \geq \disc \Delta^k + \Cost^j$ is non-redundant, i.e., we cannot remove it without changing the set. Observe that the inequality corresponding to any active triple $(i, j, k)$ coincides with the inequality $[\mathcal{L}_\Delta(\Cost)]^j \leq 0$, for a model $\Fapx$ such that $\Fapx_k^j = 1$ and $\Fapx_l^j = 0$ for all $l \neq k$. Indeed:
    \begin{align*}
        0 \geq \Cost^j - \Delta^i + \sum_{l = 1}^n \disc \Fapx_l^j \Delta^l = \Cost^j - \Delta^i + \disc \Delta^k,
    \end{align*}
    which shows that the inequality corresponding to the active triple $(i, j, k)$ is indeed part of the definition of the feasible set for this specific model, and this concludes the proof of the second part. Finally, if $\Cost \leq 0$, for all index tuples $(i, j, k)$, with $j\in \U_i$, it holds:
    \begin{align*}
        0 \geq \max\nolimits_{l \in \U_i} \Cost^l \geq \Cost^j \geq \Cost^j + \underbrace{\disc \max\nolimits_{l \in \U_k} \Cost^l}_{\leq 0},
    \end{align*}
    and thus $\outmap(\Cost) \in \invset{\Cost}$, which is precisely condition (b) of Theorem~\ref{th:sufficient_conditions_feasibility}.
\end{proof}
\noindent Theorem~\ref{th:largest_invariant_set} guarantees that, if we have an initial reward $\Costpert_0 \leq 0$ and we apply~\eqref{eq:random_reward_update} with a control input $\Delta_t \in \invset{\Costpert_t}$, then $\Costpert_t$ remains nonpositive for all $t$, regardless of the specific model realizations $\{\Bapx_t\}_{t \in \N}$. However, we have no guarantee that the reward vector can be asymptotically normalized via~\eqref{eq:random_reward_update}. The following theorem addresses this issue by providing a sufficient condition for a control law to preserve the nonpositivity of the reward vector and to make $\Costpert_t$ converge to the normal set, regardless of the sequence of models used for the update.
\begin{theorem}[\textbf{Sufficient condition for robust normalization}]\label{th:sufficient_condition_for_robust_normalization}
    If an input control law $h : \R^m \to \R^n$ satisfies
    \begin{subequations}\label{eq:suff_cond_normalization_stochastic}
        \begin{align}
            &h(\Cost) \in \invset{\Cost}\label{eq:suff_cond_normalization_stochastic_1}\\
            &h^i(\Cost) + \disc \infnorm{h(\Cost)} \leq \outmap^i(\Cost) + \alpha \infnorm{\outmap(\Cost)}, \label{eq:suff_cond_normalization_stochastic_2}
        \end{align}
    \end{subequations}
    for all $i \in \{1, \hdots, n\}$, $\Cost \leq 0$ and for some $\alpha \in [0, 1)$, then $\Delta_t = h(\Costpert_t)$ is geometrically normalizing for the stochastic update~\eqref{eq:random_reward_update}, regardless of the realization $\{\Bapx_t\}_{t \in \N}$ and the initial condition $\Costpert_0 \leq 0$.
\end{theorem}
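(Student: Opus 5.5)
The plan is to prove the statement in three steps: nonpositivity is preserved, the output $y_t=\outmap(\Costpert_t)$ contracts geometrically in $\infty$-norm for every realization, and the reward sequence itself converges geometrically to a point of $\normset$.

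\emph{Nonpositivity.} Assume $\Costpert_t\leq 0$. By \eqref{eq:suff_cond_normalization_stochastic_1} we have $\Delta_t=h(\Costpert_t)\in\invset{\Costpert_t}$. Theorem~\ref{th:largest_invariant_set} gives $\invset{\Costpert_t}\subseteq\feasset{\Costpert_t}$ for every model, in particular for $\Fapx_t$. Proposition~\ref{prop:alternative_definition_feasible_set} then yields $\Costpert_{t+1}\leq 0$ and $\Delta_t\leq 0$. By induction from $\Costpert_0\leq 0$, this holds for all $t$ and for every realization $\{\Bapx_t\}$. Consequently $y_t\leq 0$, so $\infnorm{y_t}=\max_i(-y_t^i)$.

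\emph{Contraction of the output.} Fix $i$ and write $h=h(\Costpert_t)$. For $j\in\U_i$, the update reads $\Costpert_{t+1}^j=\Costpert_t^j-h^i+\disc\sum_k(\Fapx_t)^j_k h^k$. Since $\Fapx_t$ is row-stochastic and $h\leq 0$, the last term is at least $-\disc\infnorm{h}$, whatever the realization. Maximizing over $j\in\U_i$ gives
\begin{align*}
y_{t+1}^i\geq y_t^i-h^i-\disc\infnorm{h}.
\end{align*}
Condition \eqref{eq:suff_cond_normalization_stochastic_2} then gives $-y_{t+1}^i\leq h^i+\disc\infnorm{h}-y_t^i\leq\alpha\infnorm{y_t}$. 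Combined with $y_{t+1}\leq 0$, this yields $\infnorm{y_{t+1}}\leq\alpha\infnorm{y_t}$, hence $\infnorm{y_t}\leq\alpha^t\infnorm{y_0}$. This is the core of the argument, and the only place where the model enters is the lower bound $\disc\sum_k(\Fapx_t)^j_kh^k\geq-\disc\infnorm{h}$, which holds uniformly over models.

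\emph{Convergence of the reward to $\normset$.} The step I expect to need most care is turning $y_t\to0$ into convergence of $\Costpert_t$ to a point of $\normset$ rather than only to the set. To do this, I bound the input by the output. From $h\in\invset{\Costpert_t}$ we have $h^i\geq\disc h^k+\Costpert_t^j$ for all $k$ and all $j\in\U_i$. Taking the maximum over $j\in\U_i$ and the minimum over $k$ gives $h^i\geq y_t^i-\disc\infnorm{h}$. Since $h\leq0$, taking the minimum over $i$ yields $\infnorm{h}\leq\infnorm{y_t}+\disc\infnorm{h}$, that is,
\begin{align*}
\infnorm{h(\Costpert_t)}\leq\frac{\infnorm{y_t}}{1-\disc}.
\end{align*}
Each row of $\Bapx_t$ has $\ell_1$-norm at most $1+\disc$. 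Hence
\begin{align*}
\infnorm{\Costpert_{t+1}-\Costpert_t}\leq\frac{1+\disc}{1-\disc}\,\alpha^t\infnorm{y_0}.
\end{align*}
These increments are summable, so $\{\Costpert_t\}$ is Cauchy and converges geometrically to some $\Costpert_\infty\leq0$. By continuity of $\outmap$, we get $\outmap(\Costpert_\infty)=\lim_t y_t=0$, so $\Costpert_\infty\in\normset$ by Theorem~\ref{prop:nec_suff_condition_for_normality}. Both rates are geometric with ratio $\alpha$ and independent of the realization, which gives geometric normalization.
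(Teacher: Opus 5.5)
Your proof is correct and follows the paper's argument. Nonpositivity is propagated through $\invset{\cdot}$, and the contraction $\infnorm{\outmap(\Costpert_{t+1})}\leq\alpha\infnorm{\outmap(\Costpert_t)}$ comes, exactly as in the paper, from row-stochasticity of $\Fapx_t$, $\Delta_t\leq 0$, and \eqref{eq:suff_cond_normalization_stochastic_2}. Your extra step goes slightly beyond the paper's proof, which stops at the output contraction and invokes Theorem~\ref{prop:nec_suff_condition_for_normality}: you bound $\infnorm{h(\Costpert_t)}\leq\infnorm{\outmap(\Costpert_t)}/(1-\disc)$ (essentially the paper's later Lemma~\ref{lem:geometric_input_convergence}) and use it to show that $\Costpert_t$ converges geometrically to a specific point of $\normset$.
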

\begin{proof}
    Let $\Delta_t = h(\Costpert_t)$ satisfy~\eqref{eq:suff_cond_normalization_stochastic}. Then, for any $i \in \{1, \hdots, n\}$ we have:
    \begin{align}
        \notag\left| \outmap^i\bigl(\Costpert_{t + 1}\bigr) \right| &\stkrl{a}{=} - \max_{j \in \U_i} \Costpert_{t+1}^j\\
        &\notag= - \max_{j \in \U_i}\left\{ \Costpert_t^j + \disc \sum_{k = 1}^n \bigl[\Fapx_t\bigr]_k^j \Delta_t^k - \Delta_t^i \right\}\\
        &\notag= \Delta_t^i + \min_{j \in \U_i} \left\{ - \Costpert_t^j + \disc \sum_{k = 1}^n \bigl[\Fapx_t\bigr]_k^j \left(-\Delta_t^k \right)\right\}\\
        &\notag\stkrl{b}{\leq} \Delta_t^i + \min_{j \in \U_i} \left\{ - \Costpert_t^j + \disc \max_k\left\{ -\Delta_t^k \right\}\right\}\\
        &\notag= \Delta_t^i + \min_{j \in \U_i} \left\{ - \Costpert_t^j \right\} + \disc \max_k\left\{ -\Delta_t^k \right\}\\
        &\notag\stkrl{c}{=} \Delta_t^i - \max_{j \in \U_i} \left\{ \Costpert_t^j \right\} + \disc \max_k\left\{ \bigl| \Delta_t^k \bigr| \right\}\\
        &\notag= \Delta_t^i - \outmap^i\bigl(\Costpert_t\bigr) + \disc \infnorm{\Delta_t}\\
        &\stkrl{d}{\leq} \alpha\infnorm{\outmap\bigl(\Costpert_t\bigr)},\label{eq:lyapunov_bound_inequality}
    \end{align}
    where (a) holds since~\eqref{eq:suff_cond_normalization_stochastic_1} ensures sign preservation of the reward and thus $\Costpert_0 \leq 0 \implies \outmap(\Costpert_t) \leq 0$ for all $t$, (b) holds since any realization $\Fapx_t$ is row-stochastic, (c) holds since~\eqref{eq:suff_cond_normalization_stochastic_1} implies $\Delta_t \leq 0$, and (d) follows from~\eqref{eq:suff_cond_normalization_stochastic_2}. Since~\eqref{eq:lyapunov_bound_inequality} holds for all $i$, we have:
    \begin{align}
        \infnorm{\outmap\bigl(\Costpert_{t + 1}\bigr)} \leq \alpha \infnorm{\outmap\bigl(\Costpert_{t}\bigr)},
    \end{align}
    and thus $\Costpert_t$ asymptotically converges to the normal set by Theorem \ref{prop:nec_suff_condition_for_normality}.
\end{proof}
\noindent The following corollary shows that one of the control laws analyzed in Subsection~\ref{subsec:section_2B} for the deterministic case satisfies the condition of Theorem~\ref{th:sufficient_condition_for_robust_normalization}, thereby guaranteeing normalization also in the stochastic setting.
\begin{corollary}\label{cor:full_feedback_is_robust}
    The full-output feedback $\outmap$, introduced in Definition \ref{def:MFRBS}, satisfies \eqref{eq:suff_cond_normalization_stochastic} for all $\alpha \in [\disc, 1)$, and thus it is geometrically normalizing in the stochastic case.
\end{corollary}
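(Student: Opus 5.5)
We take $h = \outmap$ and check the two conditions \eqref{eq:suff_cond_normalization_stochastic_1} and \eqref{eq:suff_cond_normalization_stochastic_2} of Theorem~\ref{th:sufficient_condition_for_robust_normalization} separately. Geometric normalization in the stochastic case then follows directly from that theorem, for any realization $\{\Bapx_t\}_{t\in\N}$ and any initial condition $\Costpert_0 \leq 0$.

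The first condition has already been established. The last part of Theorem~\ref{th:largest_invariant_set} shows that $\outmap(\Cost) \in \invset{\Cost}$ for every $\Cost \leq 0$. The chain used there is $0 \geq \max_{l\in\U_i}\Cost^l \geq \Cost^j \geq \Cost^j + \disc \max_{l\in\U_k}\Cost^l$, which gives $0 \geq \Delta^i \geq \Cost^j + \disc\Delta^k$ for all $j\in\U_i$ and all $k$. We simply invoke it.

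For the second condition we substitute $h = \outmap$. The inequality to prove becomes
\begin{align*}
\outmap^i(\Cost) + \disc\infnorm{\outmap(\Cost)} \leq \outmap^i(\Cost) + \alpha\infnorm{\outmap(\Cost)},
\end{align*}
which is equivalent to $(\disc - \alpha)\infnorm{\outmap(\Cost)} \leq 0$. This holds for every $\Cost \leq 0$ and every $i$ exactly when $\alpha \geq \disc$. Together with the requirement $\alpha \in [0,1)$ of the theorem, any $\alpha \in [\disc, 1)$ is admissible; this interval is nonempty since $\disc < 1$.

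No step is a real obstacle; the argument is a direct verification. The only point needing care is that Theorem~\ref{th:sufficient_condition_for_robust_normalization} is applied with nonpositive rewards along the whole trajectory. This is guaranteed because the first condition keeps $\Costpert_t \leq 0$ under every row-stochastic realization, as noted after Theorem~\ref{th:largest_invariant_set}.
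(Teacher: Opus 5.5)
Your proposal is correct and matches the paper's proof: the first condition is obtained by invoking the last part of Theorem~\ref{th:largest_invariant_set}, and the second reduces, after substituting $h = \outmap$, to $\disc\infnorm{\outmap(\Cost)} \leq \alpha\infnorm{\outmap(\Cost)}$, which holds for $\alpha \geq \disc$. Your added remark that the first condition keeps $\Costpert_t \leq 0$ along every realization is a harmless extra detail already used inside Theorem~\ref{th:sufficient_condition_for_robust_normalization}.
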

\begin{proof}
    By Theorem \ref{th:largest_invariant_set}, the full-output feedback satisfies~\eqref{eq:suff_cond_normalization_stochastic_1}, while it clearly satisfies~\eqref{eq:suff_cond_normalization_stochastic_2} with $\alpha \geq \disc$, since replacing $h$ with $\outmap$ yields $\disc \infnorm{\outmap(\Costpert)} \leq \alpha \infnorm{\outmap(\Costpert)}$.
\end{proof}
\begin{center}
    \begin{figure}[!t]
        \centering
        \includegraphics[width = 0.8\textwidth]{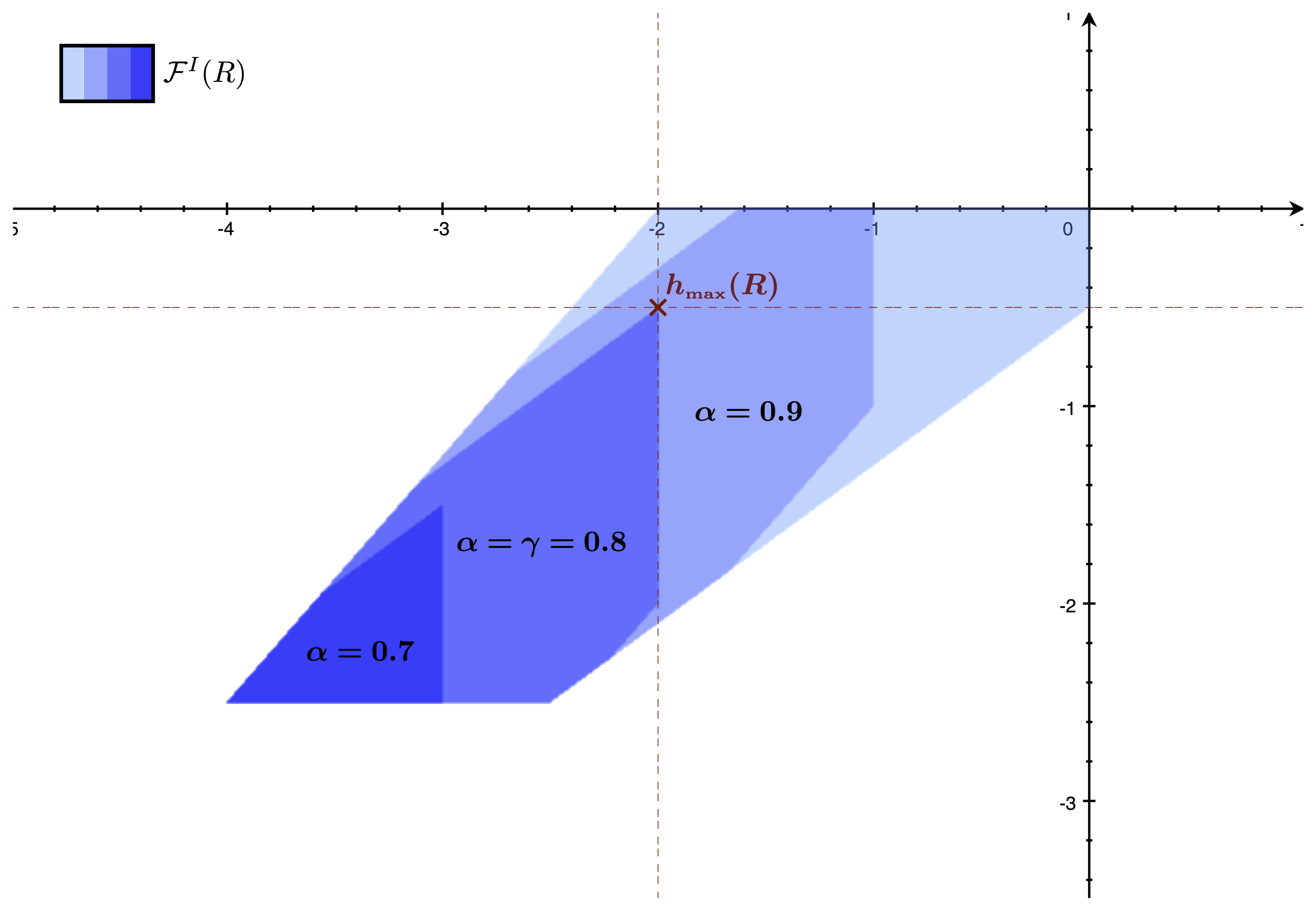}
        \caption{The largest model-invariant admissible set $\invset{\Cost}$ for a reward vector $\Cost$ with $\outmap(\Cost) = (-2, -0.5)$ and a discount factor $\disc = 0.8$. The three darker, nested subsets of $\invset{\Cost}$ represent the regions satisfying~\eqref{eq:suff_cond_normalization_stochastic} for $\alpha \in \{0.7, 0.8, 0.9\}$.}
        \label{fig:largest_invariant_set}
    \end{figure}
\end{center}
\noindent Figure~\ref{fig:largest_invariant_set} illustrates an example of an invariant set, highlighting the subsets that satisfy the condition of Theorem~\ref{th:sufficient_condition_for_robust_normalization}. Observe that, in general, $\alpha$ cannot be taken arbitrarily close to $0$, but, as shown by Corollary \ref{cor:full_feedback_is_robust}, has a lower bound $\bar{\alpha} \in (0, \disc]$. Theorems~\ref{th:largest_invariant_set}-\ref{th:sufficient_condition_for_robust_normalization} provide a framework to guarantee the convergence of the stochastic update~\eqref{eq:random_reward_update} to the normal set. However, it is still necessary to quantify the deviation between the stochastic update and the ideal update, which involves the true average model $\B$. Specifically, for an initial reward $\Cost_0 \leq 0$, we need to analyze and compare the following closed-loop dynamics:
\begin{subequations}\label{eq:ideal_and_perturbed_updates}
    \begin{align}
        \Cost_{t + 1} &= \Cost_t + \B h(\Cost_t)\\
        \Costpert_{t + 1} &= \Costpert_t + \Bapx_t h\bigl(\Costpert_t\bigr), \quad \Costpert_0 = \Cost_0,
    \end{align}
\end{subequations}
where $\Bapx_t$ is defined as in \eqref{eq:B_sample_def}, and $h : \R^m \to \R^n$ is a control law that ensures robust normalization, as stated in Theorem~\ref{th:sufficient_condition_for_robust_normalization}. This guarantees that both reward sequences converge to the normal set. To quantify the discrepancy between the two reward trajectories, we compute the deviation:
\begin{align}
    \hspace{-0.2cm}\Costpert_t - \Cost_t &= \sum_{\tau = 0}^{t - 1} \Bigl( \B h\bigl(\Costpert_\tau\bigr) + \bigl[\Bapx_\tau - \B\bigr] h\bigl(\Costpert_\tau\bigr) - \B h(\Cost_\tau) \Bigr) \notag\\
    &= \sum_{\tau = 0}^{t-1} \bigl[\Bapx_\tau - \B\bigr] h\bigl(\Costpert_\tau\bigr) + \B \sum_{\tau = 0}^{t-1} \bigl[h\bigl(\Costpert_\tau\bigr) - h(\Cost_\tau)\bigr] \label{eq:reward_deviation},
\end{align}
for all $t > 0$. The decomposition~\eqref{eq:reward_deviation} highlights that the deviation arises from two distinct contributions, which we will characterize in the upcoming results. The following lemma establishes their convergence, by showing that any input satisfying the conditions of Theorem~\ref{th:sufficient_condition_for_robust_normalization} decays geometrically fast, thereby ensuring boundedness of the overall deviation~\eqref{eq:reward_deviation}.
\begin{lemma}[\textbf{Geometric bound for robust normalizing controls}]\label{lem:geometric_input_convergence}
    If a feedback control law $h : \R^m \to \R^n$ satisfies conditions~\eqref{eq:suff_cond_normalization_stochastic} of Theorem~\ref{th:sufficient_condition_for_robust_normalization} for some $\alpha \in [0, 1)$, then:
    \begin{align}
        \infnorm{h\bigl(\Costpert_t\bigr)} \leq \frac{\alpha^t}{1 - \disc} \infnorm{\outmap\bigl(\Costpert_0\bigr)}.
    \end{align}
\end{lemma}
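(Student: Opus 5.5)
The plan is to combine two ingredients. The first is a pointwise bound of the input by the output, $\infnorm{h(\Cost)} \leq \infnorm{\outmap(\Cost)}/(1-\disc)$ for every $\Cost \leq 0$. The second is the geometric contraction of the output along the stochastic trajectory, $\infnorm{\outmap(\Costpert_{t})} \leq \alpha^t \infnorm{\outmap(\Costpert_0)}$, which was established in the proof of Theorem~\ref{th:sufficient_condition_for_robust_normalization}. Evaluating the first bound at $\Cost = \Costpert_t$ and chaining it with the second gives the claim directly.

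For the second ingredient, I take the one-step inequality $\infnorm{\outmap(\Costpert_{t+1})} \leq \alpha \infnorm{\outmap(\Costpert_t)}$ from the proof of Theorem~\ref{th:sufficient_condition_for_robust_normalization}. It holds for every realization $\Fapx_t$, since it only used row-stochasticity. Iterating it by induction on $t$ yields $\infnorm{\outmap(\Costpert_t)} \leq \alpha^t \infnorm{\outmap(\Costpert_0)}$. The same proof also shows that $\Costpert_t \leq 0$ for all $t$, because condition~\eqref{eq:suff_cond_normalization_stochastic_1} preserves nonpositivity (Theorem~\ref{th:largest_invariant_set}). Consequently condition~\eqref{eq:suff_cond_normalization_stochastic_1} may be applied at every $\Costpert_t$.

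For the first ingredient, only condition~\eqref{eq:suff_cond_normalization_stochastic_1}, namely $h(\Cost) \in \invset{\Cost}$, is needed; condition~\eqref{eq:suff_cond_normalization_stochastic_2} gives an upper bound on $h^i$, which is the wrong direction here. Write $\Delta = h(\Cost)$ and $y = \outmap(\Cost)$. By definition~\eqref{eq:feasset_def}, $\Delta \leq 0$ and $\Delta^i \geq \disc \Delta^k + \Cost^j$ for all $i,k$ and all $j \in \U_i$. Taking the maximum over $j \in \U_i$ gives $\Delta^i \geq \disc \Delta^k + y^i$. Now choose $i=k$ to be an index attaining $\Delta^i = -\infnorm{\Delta}$, which is possible since $\Delta \leq 0$. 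This gives
\begin{align*}
    -\infnorm{\Delta} \geq -\disc\infnorm{\Delta} + y^i \geq -\disc \infnorm{\Delta} - \infnorm{y},
\end{align*}
that is, $(1-\disc)\infnorm{\Delta} \leq \infnorm{y}$.

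I expect the only delicate point to be this choice of which defining inequality of $\invset{\Cost}$ to use, namely taking the same extremal index for both $i$ and $k$ so that the $\infnorm{\Delta}$ terms combine into the factor $1-\disc$. Everything else is bookkeeping.
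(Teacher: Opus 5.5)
Your proposal is correct and follows essentially the same route as the paper. It uses $h(\Costpert_t) \in \invset{\Costpert_t}$ with $k = i$ and $j$ a maximizer over $\U_i$ to get $(1-\disc)\bigl|\Delta_t^i\bigr| \leq \bigl|\outmap^i(\Costpert_t)\bigr|$, and then chains this with the output contraction $\infnorm{\outmap(\Costpert_t)} \leq \alpha^t \infnorm{\outmap(\Costpert_0)}$ from Theorem~\ref{th:sufficient_condition_for_robust_normalization}. The only cosmetic difference is that the paper derives the componentwise bound for every $i$, whereas you apply the inequality directly at the index attaining $\infnorm{\Delta}$.
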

\begin{proof}
    Since $h(\Costpert_t) \in \invset{\Costpert_t}$, $\Delta_t = h(\Costpert_t)$ satisfies:
    \begin{align*}
        0 \geq \Delta_t^i \geq \disc \Delta_t^k + \Costpert_t^j,
    \end{align*}
    for all $i, k \in \{1, \hdots, n\}$ and $j \in \U_i$. Choosing $k = i$ and $j \in \argmax_{l \in \U_i} \Costpert_t^l$, yields
    \begin{align*}
        (1 - \disc) \Delta_t^i \geq \max_{l \in \U_i} \Costpert_t^l = \outmap^i\bigl(\Costpert_t\bigr).
    \end{align*}
    By Theorem \ref{th:sufficient_condition_for_robust_normalization} we have
    \begin{align*}
        \infnorm{\outmap\bigl(\Costpert_t\bigr)} \leq \alpha^t \infnorm{\outmap\bigl(\Costpert_0\bigr)},
    \end{align*}
    and thus, for all $i \in \{1, \hdots, n\}$:
    \begin{align*}
        (1 - \disc) \bigl|\Delta_t^i\bigr| \leq \bigl|\outmap^i\bigl(\Costpert_t\bigr)\bigr| \leq \infnorm{\outmap^i\bigl(\Costpert_t\bigr)} \leq \alpha^t \infnorm{\outmap^i\bigl(\Costpert_0\bigr)}.
    \end{align*}
\end{proof}
The following lemma characterizes the stochastic behavior of the first term in \eqref{eq:reward_deviation}, allowing us to derive probabilistic bounds on the overall deviation that explicitly incorporate prior knowledge of the quality of the models $\Bapx_t$.
\begin{lemma}\label{lem:martingale_properties}
    Suppose $\{\Fapx_t\}_{t \in \N}$ is a sequence of i.i.d. random realizations of $\F$ such that $\E{}{\Fapx_t} = \F$ and $\infnorm{\Fapx_t - \F} \leq \mu$ for all $t$, and that the reward is updated according to~\eqref{eq:random_reward_update}-\eqref{eq:B_sample_def}. If the input $\Delta_t$ does not depend on $\Fapx_t$, then the stochastic process $\{M_t\}_{t \in \N}$, defined as
    \begin{subequations}\label{eq:martingale_def}
        \begin{align}
            M_0 &\coloneqq 0\\
            M_t &\coloneqq \sum_{\tau = 0}^{t-1} \bigl(\Bapx_\tau - \B\bigr)\Delta_\tau, \quad t > 0,
        \end{align}
    \end{subequations}
    is a martingale. Furthermore, if $\{\Delta_t\}_{t \in \N}$ is square-summable, then $M_t$ satisfies
    \begin{align}
        \prob\left(\sup_{t\in\N}\infnorm{M_t} \geq \varepsilon\right) \leq 2 \exp\left(-\frac{\varepsilon^2}{2\mu^2\disc^2\sum_{\tau = 0}^\infty \infnorm{\Delta_\tau}^2}\right),
    \end{align}
    for all $\varepsilon > 0$.
\end{lemma}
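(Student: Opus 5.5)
We prove the martingale property directly and then obtain the deviation bound from a vector-valued Azuma--Hoeffding inequality combined with a maximal inequality.

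\emph{Martingale property.} Let $\mathcal{G}_t$ be the $\sigma$-algebra generated by $\Fapx_0,\hdots,\Fapx_{t-1}$ (together with $\Costpert_0$). Since $\Delta_t$ does not depend on $\Fapx_t$, it is $\mathcal{G}_t$-measurable, as is $M_t$. Integrability follows from $\infnorm{(\Bapx_\tau - \B)\Delta_\tau} \leq \disc\mu\infnorm{\Delta_\tau}$ and the boundedness of the inputs. This boundedness holds, for instance, because admissible inputs lie in $\mathcal{B}_*(\Costpert_0)$, or by Lemma~\ref{lem:geometric_input_convergence} in the robust setting. We then write
\begin{align*}
M_{t+1} - M_t = \disc\bigl(\Fapx_t - \F\bigr)\Delta_t.
\end{align*}
Independence of $\Fapx_t$ from $\mathcal{G}_t$, together with $\E{}{\Fapx_t} = \F$, gives
\begin{align*}
\E{}{M_{t+1}-M_t \mid \mathcal{G}_t} = \disc\bigl(\E{}{\Fapx_t}-\F\bigr)\Delta_t = 0.
\end{align*}
Note that $\Proj$ cancels in $\Bapx_t - \B$.

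\emph{Concentration.} Each component $M_t^j$, $j \in \{1,\hdots,m\}$, is a scalar martingale. Its increments satisfy $|M_{t+1}^j - M_t^j| \leq \disc\infnorm{\Fapx_t-\F}\infnorm{\Delta_t} \leq \disc\mu\infnorm{\Delta_t} \eqqcolon c_t$, and $c_t$ is predictable. Square-summability gives $\sum_t c_t^2 = \mu^2\disc^2 S$ with $S \coloneqq \sum_\tau \infnorm{\Delta_\tau}^2 < \infty$. We apply the maximal (Doob-type) form of Azuma--Hoeffding to the supermartingale $\exp(\lambda M_t^j - \tfrac{\lambda^2}{2}\sum_{\tau<t}c_\tau^2)$. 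Hoeffding's lemma applies conditionally because the increments are conditionally centered and bounded by $c_t$. Ville's inequality then yields $\prob(\sup_t M_t^j \geq \varepsilon) \leq \exp(-\varepsilon^2/(2\mu^2\disc^2 S))$. Applying the same argument to $-M^j$ covers $|M_t^j|$. Here we must handle the fact that $c_t$ is random but predictable: either we use the deterministic bound on $S$ available in the robust setting, or we treat $S$ as a deterministic bound as the statement implicitly does.

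\emph{Main obstacle.} The delicate step is passing from componentwise bounds to $\infnorm{M_t}$ without the factor $2m$ that a union bound would introduce. Since the stated constant is $2$, I would avoid the union bound by exploiting the structure of the increments. The increment $\disc(\Fapx_t-\F)\Delta_t$ has rows bounded by $\disc\mu\infnorm{\Delta_t}$, and I would try a norm-martingale argument: either Pinelis' inequality for martingales in smooth Banach spaces, or a direct supermartingale built from $\cosh$ of the components. If neither yields exactly $2$, I would state the bound with the honest constant $2m$, noting that the exponent is unchanged. The sup over $t$ is then handled by continuity from below: we apply the bound for $\sup_{t\leq T}$ and let $T\to\infty$.
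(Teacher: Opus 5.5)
Your martingale step is the same as the paper's: filtration generated by $\Fapx_0,\hdots,\Fapx_{t-1}$, $\Delta_t$ measurable with respect to it, $\Proj$ cancelling, and $\E{}{\Fapx_t}=\F$. For the tail bound the paper does not work componentwise. It treats $M_t$ as a martingale in the $(2,1)$-smooth space $(\R^m,\twonorm{\cdot})$, applies Pinelis' maximal inequality with $b^2=\mu^2\disc^2\sum_\tau\infnorm{\Delta_\tau}^2$, and then uses $\infnorm{M_t}\le\twonorm{M_t}$. That is where its constant $2$ comes from.

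Your suspicion about that constant is justified. The paper's step is invalid, and no argument can deliver the constant $2$. In $(\R^m,\twonorm{\cdot})$, Pinelis' theorem requires $\sum_t\operatorname{ess\,sup}\twonorm{M_{t+1}-M_t}^2\le b^2$; the subscript $\infty$ in his hypothesis is an essential supremum over the sample space, not the vector sup-norm. The paper only controls $\infnorm{M_{t+1}-M_t}\le\mu\disc\infnorm{\Delta_t}$. In general the Euclidean increment is bounded only by $\sqrt{m}\,\mu\disc\infnorm{\Delta_t}$, so that route honestly gives $2\exp(-\varepsilon^2/(2m\mu^2\disc^2 S))$.

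In fact the stated inequality is false. Take the following setup:
\begin{itemize}
\item $n=2$, $m=4$, all entries of $\F$ equal to $\tfrac12$, $\disc\in(0,1)$, $\mu\in(0,1]$, $\delta>0$.
\item $\Fapx_t$ has independent rows $(\tfrac12+\tfrac{\mu}{2}\xi^j_t,\ \tfrac12-\tfrac{\mu}{2}\xi^j_t)$, with $\xi^j_t$ i.i.d.\ Rademacher. Then $\E{}{\Fapx_t}=\F$ and $\infnorm{\Fapx_t-\F}=\mu$.
\item $\Delta_t=(\delta,-\delta)$ for $t<4$ and $\Delta_t=0$ afterwards.
\end{itemize}
Then the components $M^j_4=\disc\mu\delta\sum_{\tau=0}^{3}\xi^j_\tau$ are independent, and $\sum_\tau\infnorm{\Delta_\tau}^2=4\delta^2$. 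For $\varepsilon=4\disc\mu\delta$,
\[
\prob\Bigl(\sup_{t\in\N}\infnorm{M_t}\ge\varepsilon\Bigr)\ \ge\ 1-\bigl(\tfrac{7}{8}\bigr)^4\approx 0.41\ >\ 0.27\approx 2e^{-2},
\]
and $2e^{-2}$ is exactly the claimed bound. The lemma imposes no sign on $\Delta_t$. For nonpositive inputs, the same construction with $\Delta_t=(-\delta,0)$ still fails, once $m$ is a few hundred.

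So your fallback is the right conclusion. Two-sided Azuma--Ville for each component, plus a union bound, gives a valid bound with prefactor $2m$; the $\cosh$ supermartingale gives the same. Whenever this bound is below $1$, it is at least as sharp as the corrected Pinelis bound, since $m$ then costs only $\log m$ in the exponent.

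Your caveat about needing a deterministic bound on $S$ applies equally to the paper, since Pinelis also needs a deterministic $b$. Downstream, Lemma~\ref{lem:geometric_input_convergence} supplies that bound, and the proof of Theorem~\ref{th:probability_bound_deviation} then yields the prefactor $2m$ rather than $2$.

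Finally, integrability already follows from that almost-sure bound. You can therefore drop the appeal to $\mathcal{B}_*(\Costpert_0)$, whose nesting argument is only available for exact-model updates.
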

\begin{proof}
    By the definitions of $\B$ and $\Bapx_t$ (see \eqref{eq:input_matrix}-\eqref{eq:B_sample_def}), we have
    \begin{align}\label{eq:equivalent_def_martingale}
        M_t = \sum_{\tau = 0}^{t-1} \bigl(\Bapx_\tau - \B\bigr)\Delta_\tau = \disc \sum_{\tau = 0}^{t-1} \bigl(\Fapx_\tau - \F\bigr)\Delta_\tau,
    \end{align}
    for all $t > 0$. Let $\mathfrak{F}_t$ be the filtration
    \begin{align}\label{eq:azuma_hoeffding}
        \mathfrak{F}_t \coloneqq \sigma\bigl(\Fapx_0, \hdots, \Fapx_{t-1}\bigr).
    \end{align}
    Then:
    \begin{align*}
        \E{}{M_{t+1} \mid \mathfrak{F}_t} &= \E{}{ \disc\sum_{\tau = 0}^t \bigl(\Fapx_\tau - \F\bigr) \Delta_\tau \mid \mathfrak{F}_t}\\
        &= \E{}{ \disc \sum_{\tau = 0}^{t-1} \bigl(\Fapx_\tau - \F\bigr) \Delta_\tau + \disc\bigl(\Fapx_t - \F\bigr) \Delta_t \mid \mathfrak{F}_t}\\
        &= \E{}{M_t +  \disc\bigl(\Fapx_t - \F\bigr) \Delta_t \mid \mathfrak{F}_t}\\
        &\stkrl{a}{=} M_t +  \disc\E{}{\bigl(\Fapx_t - \F\bigr)} \Delta_t\\
        &= M_t,
    \end{align*}
    where $(a)$ follows from the fact that $\Delta_t$ is independent of $\Fapx_t$ and is thus $\mathfrak{F}_t$-measurable. Therefore, $M_t$ is a $\R^m$-valued martingale, whose martingale increments are bounded as follows
    \begin{align}
        \hspace{-0.3cm}\infnorm{M_t - M_{t-1}} = \disc \infnorm{\bigl(\Fapx_{t - 1} - \F\bigr) \Delta_{t - 1}} \leq \mu \disc \infnorm{\Delta_{t - 1}}\label{eq:increment_bound}.
    \end{align}
    Moreover, square-summability of $\{\Delta_t\}_{t \in \N}$ implies
    \begin{align}\label{eq:square_summability_bound}
        \sum_{t = 1}^\infty \infnorm{M_t - M_{t-1}}^2 \leq \mu^2 \disc^2 \sum_{t = 1}^\infty \infnorm{\Delta_{t-1}}^2 < \infty.
    \end{align}
    Finally, as a Hilbert space under the Euclidean inner product, $(\R^m, \twonorm{\cdot})$ is a $(2, D)$-smooth separable Banach space with $D = 1$, which, together with \eqref{eq:square_summability_bound}, allows us to apply the maximal inequality for 2-smooth Banach spaces~\cite[Theorem 3.5]{pinelis1994optimum}, yielding
    \begin{align*}
        \prob\left(\sup_{t\in\N}\twonorm{M_t} \geq \varepsilon\right) \leq 2 \exp\left(-\frac{\varepsilon^2}{2\mu^2\disc^2\sum_{\tau = 0}^\infty \infnorm{\Delta_\tau}^2}\right),
    \end{align*}
    for all $\varepsilon > 0$. Since, by norm equivalence, $\infnorm{M_t} \leq \twonorm{M_t}$ for all $t \in \N$, we have
    \begin{align*}
        \sup_{t \in \N} \infnorm{M_t} \leq \sup_{t \in \N} \twonorm{M_t}.
    \end{align*}
    This implies the following event inclusion
    \begin{align*}
        \left\{\sup_{t \in \N} \infnorm{M_t} \geq \varepsilon\right\} \subseteq \left\{\sup_{t \in \N} \twonorm{M_t} \geq \varepsilon\right\},
    \end{align*}
    which leads to
    \begin{align*}
        \prob\left(\sup_{t \in \N} \infnorm{M_t} \geq \varepsilon\right) \leq \prob\left(\sup_{t \in \N} \twonorm{M_t} \geq \varepsilon\right).
    \end{align*}
\end{proof}
\begin{remark}
    Since $\Costpert_t$ is $\mathfrak{F}_t$-measurable, the previous lemma also holds for a feedback control law of the form $\Delta_t = h(\Costpert_t)$, provided that $h$ is independent of $\Fapx_t$.
\end{remark}
\begin{remark}
    Observe that the model mismatch $\mu$ is bounded by
    \begin{align*}
        0 \leq \mu \leq 2,
    \end{align*}
    since, if $A_1, A_2 \in \R^{m\times n}$ are row-stochastic, then
    \begin{align*}
        \infnorm{A_1 - A_2} \leq \infnorm{A_1} + \infnorm{A_2} = 2.
    \end{align*}
\end{remark}
\noindent With the previous lemmas at hand, we are now ready to derive a probability bound on the deviation~\eqref{eq:reward_deviation}.
\begin{theorem}[\textbf{Probability bound on the stochastic deviation}]\label{th:probability_bound_deviation}
    Let $\{\Fapx_t\}_{t \in \N}$ be a sequence of i.i.d. random realizations of $\F$ such that $\E{}{\Fapx_t} = \F$ and $\infnorm{\Fapx_t - \F} \leq \mu$, and let $h : \R^m \to \R^n$ be a model-independent feedback control law satisfying~\eqref{eq:suff_cond_normalization_stochastic} for some $\alpha \in [0, 1)$ and for all $\Cost \leq 0$. Let $\{\Cost_t\}_{t \in \N}$ and $\{\Costpert_t\}_{t \in \N}$ be the nominal and perturbed updates, respectively solutions of~\eqref{eq:ideal_and_perturbed_updates}. Then:
    \begin{align}
        \prob\left(\sup_{t \in \N} \infnorm{\Costpert_t - \Cost_t} \geq \varepsilon\right) \leq 2 \exp\left(-c \frac{(\varepsilon - \threshold)^2}{\infnorm{\outmap(\Cost_0)}^2}\right),
    \end{align}
    for all $\varepsilon \geq \threshold$, where
    \begin{subequations}
        \begin{align}
        \threshold &\coloneqq 2\frac{1 + \disc}{(1 - \alpha)(1 - \disc)} \infnorm{\outmap(\Cost_0)}\label{eq:base_threshold}\\
        c &\coloneqq \frac{(1 - \alpha^2)(1 - \disc)^2}{2\mu^2\disc^2}.
    \end{align}
    \end{subequations}
\end{theorem}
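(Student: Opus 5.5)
We start from the decomposition \eqref{eq:reward_deviation}, which splits $\Costpert_t - \Cost_t$ into a martingale term and a deterministic-looking drift term:
\begin{align*}
    \Costpert_t - \Cost_t = M_t + \B \sum_{\tau = 0}^{t-1} \bigl[h\bigl(\Costpert_\tau\bigr) - h(\Cost_\tau)\bigr],
\end{align*}
where $M_t$ is the process of Lemma~\ref{lem:martingale_properties} with $\Delta_\tau = h(\Costpert_\tau)$. The plan is to bound the second term almost surely by the threshold $\threshold$. The first term is then controlled with the maximal inequality of Lemma~\ref{lem:martingale_properties}. The event inclusion
\begin{align*}
    \Bigl\{\sup_t \infnorm{\Costpert_t - \Cost_t} \geq \varepsilon\Bigr\} \subseteq \Bigl\{\sup_t \infnorm{M_t} \geq \varepsilon - \threshold\Bigr\}
\end{align*}
then yields the claim for $\varepsilon \geq \threshold$.

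\textbf{Deterministic term.} Since $\F$ is row-stochastic, $\infnorm{\B} \leq \disc\infnorm{\F} + \infnorm{\Proj} = 1 + \disc$. Next, apply the triangle inequality termwise and use Lemma~\ref{lem:geometric_input_convergence} for both trajectories. Both start from $\Cost_0$. The nominal trajectory is the special case $\Bapx_t \equiv \B$, so Theorem~\ref{th:sufficient_condition_for_robust_normalization} and the lemma apply to it as well. This gives
\begin{align*}
    \infnorm{h(\Costpert_\tau)} + \infnorm{h(\Cost_\tau)} \leq \frac{2\alpha^\tau}{1-\disc}\infnorm{\outmap(\Cost_0)}.
\end{align*}
Summing the geometric series over $\tau$ gives
\begin{align*}
    \infnorm{\B\textstyle\sum_\tau[\cdots]} \leq \dfrac{2(1+\disc)}{(1-\alpha)(1-\disc)}\infnorm{\outmap(\Cost_0)} = \threshold,
\end{align*}
uniformly in $t$ and for every realization.

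\textbf{Martingale term.} The hypotheses of Lemma~\ref{lem:martingale_properties} hold: $h$ is model-independent, so $\Delta_t = h(\Costpert_t)$ is $\mathfrak{F}_t$-measurable, as noted in the subsequent remark. Square-summability follows deterministically from Lemma~\ref{lem:geometric_input_convergence}:
\begin{align*}
    \sum_{\tau} \infnorm{\Delta_\tau}^2 \leq \frac{\infnorm{\outmap(\Cost_0)}^2}{(1-\disc)^2(1-\alpha^2)}.
\end{align*}
This is an almost-sure bound, so it can be substituted into the exponent of the maximal inequality. Doing so with $\varepsilon - \threshold$ in place of $\varepsilon$ gives
\begin{align*}
    2\exp\Bigl(-\frac{(\varepsilon-\threshold)^2 (1-\alpha^2)(1-\disc)^2}{2\mu^2\disc^2 \infnorm{\outmap(\Cost_0)}^2}\Bigr),
\end{align*}
which is exactly the constant $c$. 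The case $\varepsilon = \threshold$ is trivial.

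\textbf{Main obstacle.} The delicate point is the legitimacy of using Lemma~\ref{lem:martingale_properties} when the quantity $\sum_\tau\infnorm{\Delta_\tau}^2$ is random. The statement of that lemma treats it as a fixed constant. I would handle this by relying on the form of the Pinelis maximal inequality in which the increments are bounded almost surely by a deterministic sequence. Here $\infnorm{M_{t}-M_{t-1}} \leq \mu\disc\,\alpha^{t-1}\infnorm{\outmap(\Cost_0)}/(1-\disc)$, which is deterministic. Squaring and summing these increment bounds gives the same constant, so applying the inequality with them reproduces the bound above.
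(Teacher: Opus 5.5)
Your proposal is correct and is essentially the paper's proof. It uses the same split of $\Costpert_t-\Cost_t$ into $M_t$ plus the $\B$-drift, the bound $\infnorm{\B}\le 1+\disc$, Lemma~\ref{lem:geometric_input_convergence} on both trajectories (the nominal one being the case $\Bapx_t\equiv\B$) to produce $\threshold$, the same event inclusion, and Lemma~\ref{lem:martingale_properties} with the geometric bound on $\sum_\tau\infnorm{h(\Costpert_\tau)}^2$. Your use of a deterministic almost-sure bound on the increments tidies a step the paper handles loosely, since the paper puts the random square-sum into the exponent and bounds it only afterwards. Note, however, that both you and the paper's Lemma~\ref{lem:martingale_properties} plug $\infnorm{\cdot}$ increment bounds into Pinelis's inequality for $(\R^m,\twonorm{\cdot})$; that inequality needs Euclidean bounds, which strictly costs a factor $m$ in the exponent (since $\twonorm{d}\le\sqrt{m}\,\infnorm{d}$).
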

\begin{proof}
    Leveraging \eqref{eq:reward_deviation}, we have, for all $t > 0$:
    \begin{align}
        \infnorm{\Costpert_t - \Cost_t} &= \infnorm{\sum_{\tau = 0}^{t-1} \bigl[\Bapx_\tau - \B\bigr] h\bigl(\Costpert_\tau\bigr) + \B \sum_{\tau = 0}^{t-1} \bigl[h\bigl(\Costpert_\tau\bigr) - h(\Cost_\tau)\bigr]} \notag\\
        &\stkrl{1}{\leq} \infnorm{M_t} + (1 + \disc) \sum_{\tau = 0}^{t-1} \infnorm{h\bigl(\Costpert_\tau\bigr) - h(\Cost_\tau)} \notag\\
        &\leq \infnorm{M_t} + (1 + \disc) \sum_{\tau = 0}^{t-1} \left(\infnorm{h\bigl(\Costpert_\tau\bigr)} + \infnorm{h(\Cost_\tau)}\right) \notag\\
        &\stkrl{2}{\leq} \infnorm{M_t} + 2\frac{1 + \disc}{1 - \disc} \infnorm{\outmap(\Cost_0)}\sum_{\tau = 0}^{t - 1} \alpha^\tau \label{eq:improvable_bound}\\
        &\leq \infnorm{M_t} + \underbrace{2\frac{1 + \disc}{(1 - \alpha)(1 - \disc)} \infnorm{\outmap(\Cost_0)}}_{= \threshold} \notag,
    \end{align}
    where in (1) we used \eqref{eq:martingale_def} and the fact that
    \begin{align*}
        \infnorm{\B} &= \max_j\left\{\sum_{k = 1}^n\bigl|\B_k^j\bigr|\right\}\\
        &= \max_j\left\{\bigl(1 - \disc \F_{\scalebox{0.6}{$\proj(j)$}}^j\bigr) + \disc\bigl(1 - \F_{\scalebox{0.6}{$\proj(j)$}}^j\bigr)\right\}\\
        &= \max_j\left\{1 + \disc - 2 \disc \F_{\scalebox{0.6}{$\proj(j)$}}^j\right\}\\
        &\leq 1 + \disc,
    \end{align*}
    while (2) follows from Lemma \ref{lem:geometric_input_convergence}. Hence, since for all $\varepsilon \geq \threshold$:
    \begin{align}
        \left\{ \sup_{t \in \N} \infnorm{\Costpert_t - \Cost_t} \geq \varepsilon \right\} \subseteq \left\{ \sup_{t \in \N} \infnorm{M_t} \geq \varepsilon - \threshold \right\},
    \end{align}
    and since $\{h(\Costpert_t)\}_{t \in \N}$ is square-summable (in fact, geometrically bounded), by Lemma \ref{lem:martingale_properties} we have:
    \begin{align*}
        \prob\left(\sup_{t \in \N} \infnorm{\Costpert_t - \Cost_t} \geq \varepsilon\right) \leq 2 \exp\left(-\frac{(\varepsilon - \threshold)^2}{2\mu^2\disc^2\sum_{\tau = 0}^\infty \infnorm{h(\Costpert_\tau)}^2}\right).
    \end{align*}
    The result follows since, by Lemma \ref{lem:geometric_input_convergence}:
    \begin{align*}
        \sum_{\tau = 0}^\infty \infnorm{h(\Costpert_\tau)}^2 &\leq \sum_{\tau = 0}^\infty \frac{\alpha^{2\tau}}{(1 - \disc)^2} \infnorm{\outmap(\Cost_0)}^2 = \frac{\infnorm{\outmap(\Cost_0)}^2}{(1 - \disc)^2(1 - \alpha^2)}.
    \end{align*}
\end{proof}
\noindent Theorem~\ref{th:probability_bound_deviation} provides an exponential bound on the probability that the deviation~\eqref{eq:reward_deviation} exceeds a given threshold $\varepsilon$, relative to a base threshold $\threshold$ determined by the input decay. Notably, a smaller model mismatch $\mu$ yields a faster exponential decay. Moreover, the base threshold can potentially be further reduced if additional information about the feedback control law is available, as discussed in the following remark.
\begin{remark}
    Observe that the term $1 - \disc$ at the denominator in the definition of $\threshold$ arises from the application of Lemma~\ref{lem:geometric_input_convergence} in~\eqref{eq:improvable_bound}. This bound relies only on the fact that the feedback control law satisfies~\eqref{eq:suff_cond_normalization_stochastic}, making it fairly conservative. Consequently, the bound may be tightened if additional information about the specific control law is available. In particular, if $h = \outmap$, the term $1 - \disc$ in~\eqref{eq:base_threshold} can be replaced by~$1$, yielding:
    \begin{align*}
        \threshold &= 2\frac{1 + \disc}{1 - \alpha} \infnorm{\outmap(\Cost_0)},
    \end{align*}
    where $\alpha$ can be set to $\disc$, by Corollary \ref{cor:full_feedback_is_robust}.
\end{remark}
\noindent We conclude the section by leveraging the optimal control framework introduced in Subsection~\ref{subsec:section_2C} to design a normalizing feedback control law. This framework allows us to exploit both constraints and the cost function as degrees of freedom for the design of a control law for the stochastic dynamics~\eqref{eq:random_reward_update}. Ideally, we aim to design a robust normalizing control law, for instance, one satisfying the conditions of Theorem~\ref{th:sufficient_condition_for_robust_normalization}. Condition~\eqref{eq:suff_cond_normalization_stochastic_1} can be directly enforced as a constraint, since it can be expressed as a finite set of linear inequalities of the form~\eqref{eq:general_feas_constraint}. On the other hand, instead of explicitly enforcing~\eqref{eq:suff_cond_normalization_stochastic_2}, we adopt a scenario-based approach. Specifically, we consider a finite number of realizations (scenarios) of the stochastic dynamics over a sufficiently long prediction horizon, and we seek a given control input that minimizes the average cumulative output defined in~\eqref{eq:output_regulation_implicit}, in analogy with Problem~\ref{eq:ideal_optimal_control_problem}. This leads to a scenario-based MPC formulation, as described in~\cite[Section 3.2]{schildbach2014scenario}. Given a reward vector $\Cost_t$ at time $t$, we consider the following optimal control problem
\begin{subequations}\label{eq:scenario_optimal_control_problem}
    \begin{align}
        &\min_{\sml \{(\Cost_{\sigma,\tau}, \Delta_\tau)\}} \frac{1}{N} \sum_{\sigma = 1}^N \sum_{\tau = t}^{t+T-1} \left(\sum_{i = 1}^n -\max_{j \in \U_i} \Cost_{\sigma, \tau+1}^j\right) + \epsilon \twonorm{\Delta_\tau}^2\\\notag
        &\quad\text{subj.to:}\\
        &\qquad \Cost_{\sigma, t} = \Cost_t \leq 0,\\
        &\qquad \Cost_{\sigma, \tau+1} = \Cost_{\sigma,\tau} + \Bapx_{\sigma,\tau} \Delta_\tau,\\
        &\qquad \Delta_\tau \in \invset{\Cost_{\sigma,\tau}},\\
        &\quad\text{for all } \tau \in \{t, \hdots,t+T-1\}, \sigma \in \{1, \hdots, N\}.\notag
    \end{align}
\end{subequations}
Here, $N$ is the number of sampled scenarios, while the regularization term $\epsilon \twonorm{\Delta_\tau}^2$ promotes square-summability of the control input. At each iteration $t$, we sample $N$ independent realizations and solve problem~\eqref{eq:scenario_optimal_control_problem}. The first input solution $\Delta_t$, which depends on the current reward $\Cost_t$, is then used to update the reward with newly sampled model $\Bapx_t$, and the process is repeated in a receding horizon fashion. This approach yields a feasible, model-invariant feedback control law, which can be compared with the full-output feedback strategy of Definition~\ref{def:MFRBS}. The latter has been selected because it is the only control law introduced in Section~\ref{subsec:section_2B} that satisfies the conditions of Theorem~\ref{th:sufficient_condition_for_robust_normalization} (cf. Corollary \ref{cor:full_feedback_is_robust}). In the following subsection, we present a numerical example in which the MPC feedback law derived from~\eqref{eq:scenario_optimal_control_problem} outperforms this strategy, originally proposed in~\cite{mustafin2025mdpgeometrynormalizationreward}. 
\subsection{Numerical example}
\label{subsec:section_3C}
Consider the MDP consisting of two states $\{\state_1, \state_2\}$ with five actions each $\{\act_{ij}\}_{j = 1}^5$, with transition probabilities to $\state_1$
\begin{center}
    \begin{tabular}{|c|c|c|c|c|c|}
        \hline
        $\f(\state_1, \act_{ij})$ & $\act_{i1}$ & $\act_{i2}$ & $\act_{i3}$ & $\act_{i4}$ & $\act_{i5}$\\
        \hline
        $i = 1$ & 0.35 & 0.4  & 0.7 & 0.3 & 0.4 \\
        $i = 2$ & 0.75 & 0.8 & 0.2 & 0.55 & 0.25  \\
        \hline
    \end{tabular}
\end{center}
and with reward function
\begin{center}
    \begin{tabular}{|c|c|c|c|c|c|}
        \hline
        $\cost(\act_{ij})$ & $\act_{i1}$ & $\act_{i2}$ & $\act_{i3}$ & $\act_{i4}$ & $\act_{i5}$\\
        \hline
        $i = 1$ & -0.7 & -0.5  & -0.9 & -0.45 & -0.2 \\
        $i = 2$ & -3.2 & -3 & -2.8 & -3    & -2.9  \\
        \hline
    \end{tabular}
\end{center}
and a discount factor $\disc = 0.8$. For this MDP the (unique) optimal policy $\policy^*$ is the given by:
\begin{align*}
    \policy^*(\state_1) &= \act_{15}\\
    \policy^*(\state_2) &= \act_{22}.
\end{align*}
We set a scenario MPC based on~\eqref{eq:scenario_optimal_control_problem} with a prediction horizon $T = 20$ and a regularization parameter $\epsilon = 2$. The model samples $\Bapx_t$ are drawn from a uniform distribution around the true model, with a maximum entry-wise deviation of 0.2. Both the full output feedback and the scenario MPC were evaluated over $500$ Monte-Carlo simulations. For the scenario MPC, the number of scenarios considered varies between 1 and 4. Each simulation is interrupted when
\begin{align*}
    \infnorm{y_t} = \infnorm{\outmap(\Cost_t)} < 10^{-3}.
\end{align*}
At the end of each simulation, the resulting greedy policy was compared with the true optimal policy. Figure~\ref{fig:normalization} shows that normalization is achieved in every framework, albeit at different rates. In particular, scenario MPC tends to produce smaller inputs in the initial phase, whereas the full-output feedback, which is initially constrained by the output value, tends to decay more rapidly. Figure~\ref{fig:comparison_srbs_vs_smpc} compares the percentage of runs that reached the optimal policy under scenario MPC and full-output feedback, demonstrating a significant performance improvement when employing the former, with performance further improving as the number of scenarios increases. Figure~\ref{fig:average_policy} highlights how the average policy, interpretable as a resulting stochastic policy, gets closer to the optimal deterministic one under scenario MPC. We emphasize, however, that this example is not intended to establish the superiority of scenario MPC over the full output feedback. Indeed, the latter retains a clear advantage in terms of computational efficiency and practical deployability. Rather, the purpose is to show that there exist normalization laws capable of achieving improved performance under model uncertainty, although possibly at a higher computational cost. This observation suggests that there remains room for improvement, and that the proposed framework may serve as a systematic tool for the design and analysis of more effective normalization laws beyond those currently available.
\begin{center}
    \begin{figure}[!b]
        \centering
        \includegraphics[width = 0.7\textwidth]{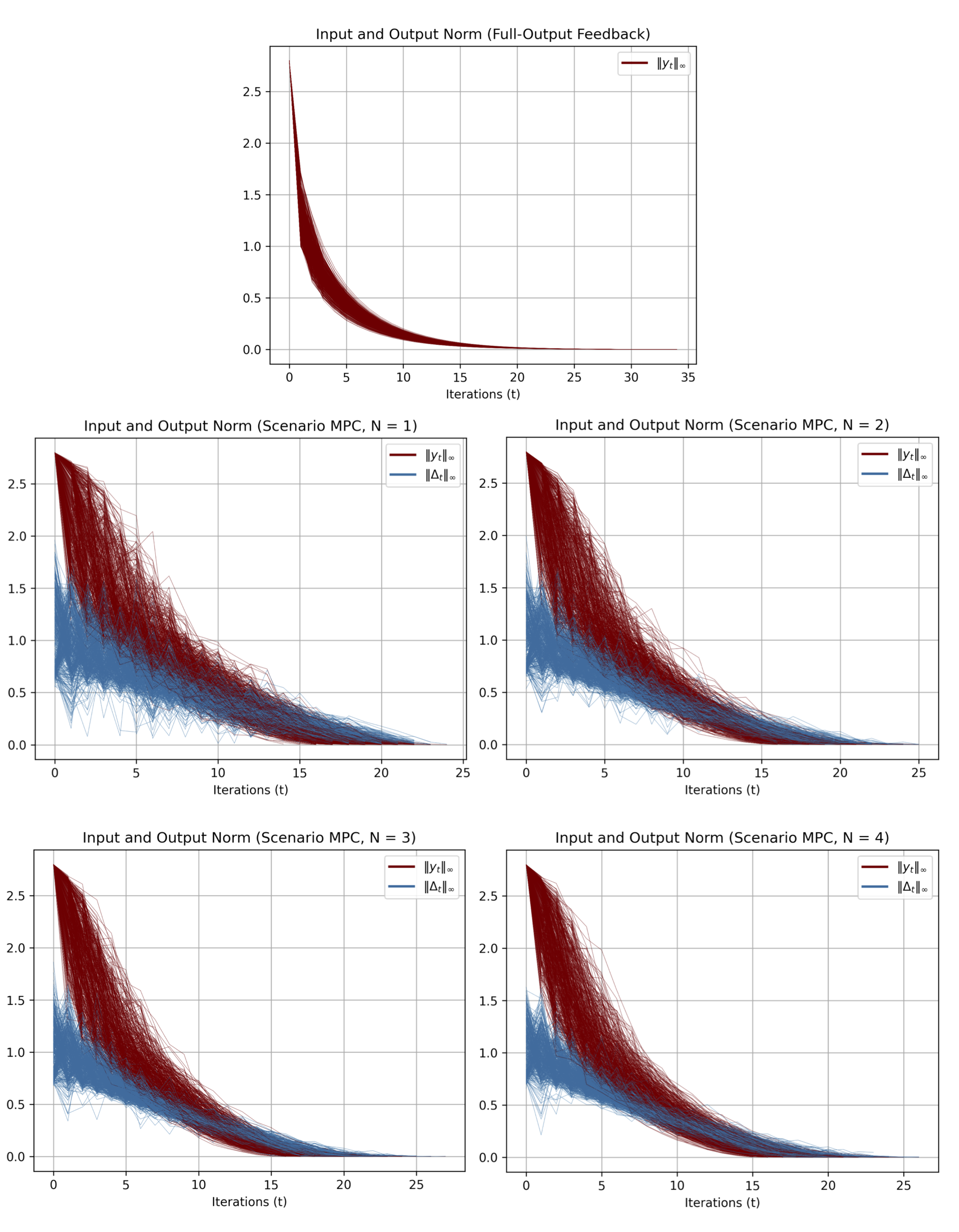}
        \caption{Input and Output supremum norms over $500$ Monte-Carlo simulations. Recall that, in the full-output feedback case, input and output coincide.}
        \label{fig:normalization}
    \end{figure}
\end{center}%
\begin{center}
    \begin{figure}[!t]
        \centering
        \includegraphics[width = 0.75\textwidth]{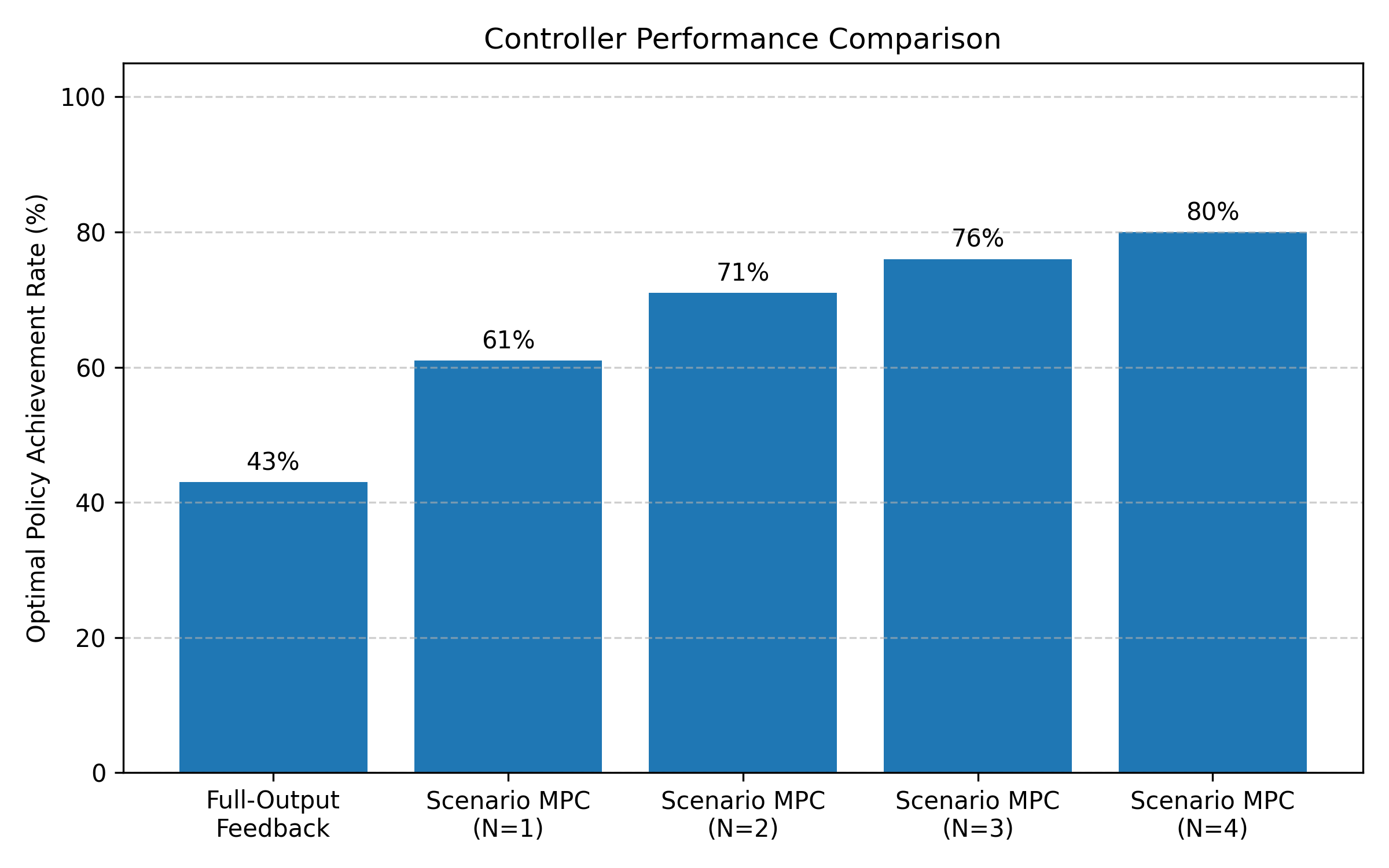}
        \caption{Comparison between the full-output feedback control law and scenario MPC in terms of the percentage of times the greedy policy computed from the normalized reward function is optimal.}
        \label{fig:comparison_srbs_vs_smpc}
    \end{figure}
\end{center}%
\begin{center}
    \begin{figure}[!t]
        \centering
        \includegraphics[width = 0.75\textwidth]{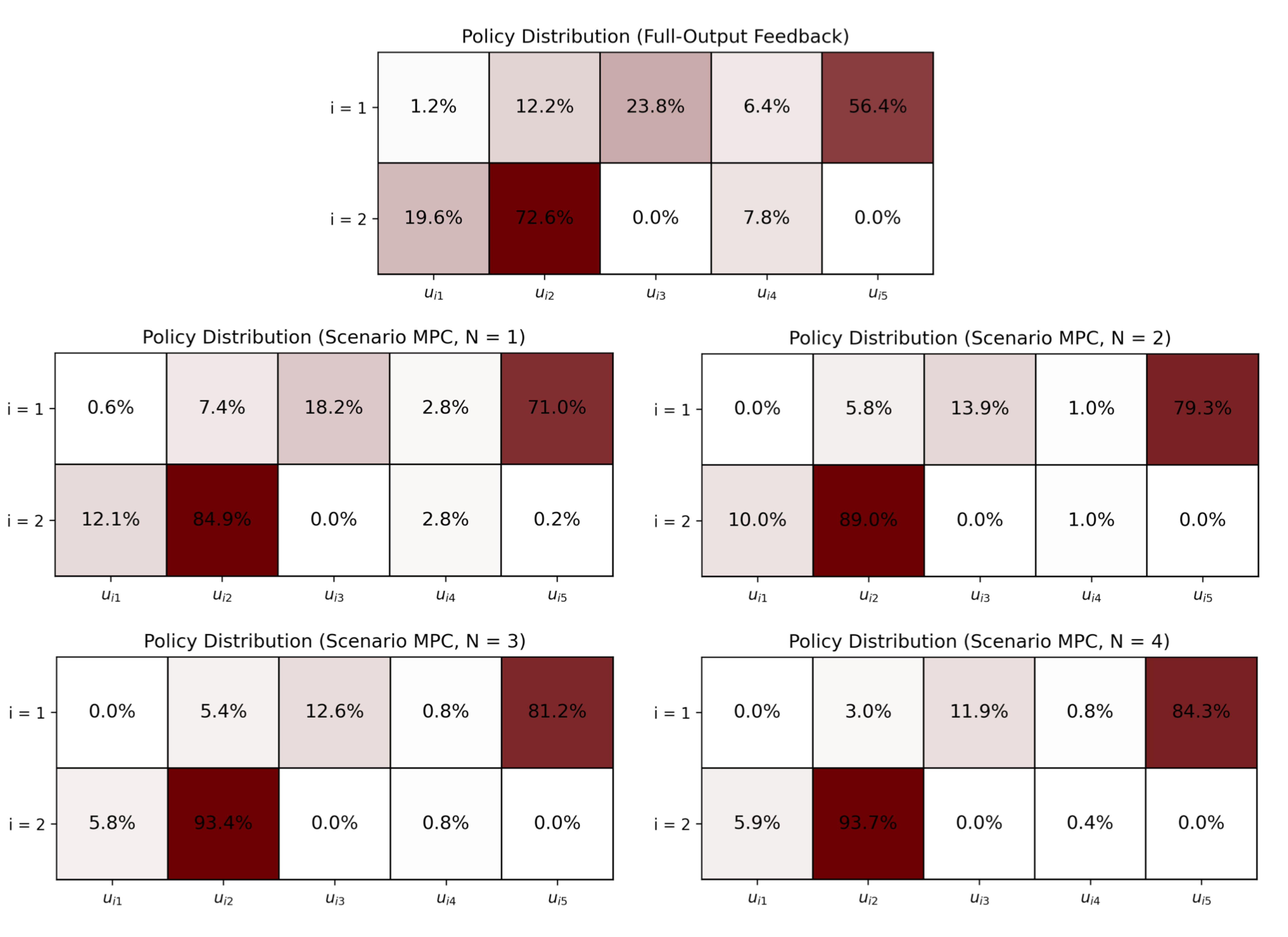}
        \caption{Average greedy policy over 500 Monte-Carlo simulations, expressed in percentage.}
        \label{fig:average_policy}
    \end{figure}
\end{center}%

\end{document}